\newtheorem{theorem}{Theorem}[section]
\newtheorem{lemma}[theorem]{Lemma}
\newtheorem{proposition}[theorem]{Proposition}
\newtheorem{corollary}[theorem]{Corollary}
\theoremstyle{definition}
\newtheorem{definition}[theorem]{Definition}
\newtheorem{problem}[theorem]{Problem}
\newtheorem{question}[theorem]{Question}
\newtheorem{remark}[theorem]{Remark}
\newtheorem *{Theorem 1}{Theorem 1}
\newtheorem *{Theorem 2}{Theorem 2}
\newtheorem *{Theorem 5}{Theorem 5}
\newtheorem *{Theorem 3}{Theorem 3}
\newtheorem *{Theorem 4}{Theorem 4}
\newtheorem *{Problem1}{The group ring isomorphism problem (GRIP)}
\newtheorem *{Problem2}{The integral group ring isomorphism problem (IGRIP)}
\newtheorem *{Problem3}{The twisted group ring isomorphism problem (TGRIP)}
\newtheorem *{Brauer}{The Trick of Brauer}
\newcommand{\Q}{{\mathbb Q}}
\newcommand{\C}{{\mathbb C}}
\newcommand{\Gal}{{\operatorname{Gal}}}
\newcommand{\RNum}[1]{\uppercase\expandafter{\romannumeral #1\relax}}
\newcommand{\rNum}[1]{\lowercase\expandafter{\romannumeral #1\relax}}
\newcommand{\Hom}{{\operatorname{Hom}}}
\newcommand{\Aut}{{\operatorname{Aut}}}
\newcommand{\Ext}{{\operatorname{Ext}}}
\newcommand{\Irr}{{\operatorname{Irr}}}
\newcommand{\Nr}{{\operatorname{Nr}}}
\begin{document}
	\title[Twisted group ring isomorphism problem]{Twisted group ring isomorphism problem and infinite cohomology groups}
	\address{Departament of Mathematics, Free University of Brussels, 1050 Brussels, Belgium}
	\author{Leo Margolis}
	\email{leo.margolis@vub.be}

	\author{Ofir Schnabel}
	\address{Department of Mathematics, ORT Braude College, 2161002 Karmiel, Israel}
	\email{ofirsch@braude.ac.il}
	\keywords{Twisted group algebras, Projective representations, Second cohomology groups}
\subjclass[2010]{16S35, 20C25, 20E99 }
\thanks{This research has been supported by the Research Foundation Flanders (FWO - Vlaanderen).}
% \date{\today\vspace{-0.5cm}}	
	
	\begin{abstract}
We continue our investigation of a variation of the group ring isomorphism problem for twisted group algebras. Contrary to previous work, we include cohomology classes which do not contain any cocycle of finite order. This allows us to study the problem in particular over any field of characteristic $0$. 
We prove that there are finite groups $G$ and $H$ which can not be distinguished by their rational twisted group algebras, while $G$ and $H$ can be identified by their semi-simple twisted group algebras over other fields. This is in contrast with the fact that the structural information on $G$ which can obtained from all the semi-simple group algebras of $G$ is already encoded in its rational group algebra. 

We further show that for an odd prime $p$ there are groups of order $p^4$ which can not be distinguished by their twisted group algebras over $F$ for any field $F$ of characteristic different from $p$. On the other hand we prove that the groups constructed by E. Dade, which have isomorphic group algebras over any field, can be distinguished by their rational twisted group algebras.
We also answer a question about sufficient conditions for the twisted group ring isomorphism problem to hold over the complex numbers.
	\end{abstract}
	
	\maketitle
	%\tikzset{node distance=3cm, auto}
	
	\section{Introduction}\label{Intro}\pagenumbering{arabic} \setcounter{page}{1}
	In \cite{MargolisSchnabel} we proposed a version of the celebrated group ring isomorphism problem for twisted group rings, namely ``the
	twisted group ring isomorphism problem''.
	
	Recall that for a finite group $G$ and a commutative ring $R$, the
	group ring isomorphism problem (GRIP) asks whether the ring structure of $RG$
	determines $G$ up to isomorphism. 
	Roughly speaking the twisted group 	ring isomorphism problem (TGRIP) asks if the ring structure of all the twisted group rings of $G$
	over $R$ determines the group $G$.
  In a sense the (TGRIP) can also be understood as a question on how strongly the projective representation theory of a group over a given ring $R$ influences its structure, as was already shown in the ground laying work of Schur \cite{Schur}.
	
	We denote by $R^*$ the unit group in a ring $R$. For a $2$-cocycle $\alpha \in Z^2(G, R^*)$ the twisted group ring $R^\alpha G$ of $G$ over $R$
	with respect to $\alpha$ is the free $R$-module with basis $\{u_g\}_{g \in G}$ where the multiplication on the basis is defined via
	\[u_g u_h = \alpha(g, h) u_{gh} \ \ \text{for all} \ \ g,h \in G \]
	and any $u_g$ commutes with the elements of $R$. Owning to this definition $R^\alpha G$ is automatically associative. The ring structure of
	$R^{\alpha}G$ depends only on the cohomology class of $\alpha$ and not on the particular $2$-cocycle.
	Notice that the ring $R$ is central in the twisted group ring
	$R^{\alpha}G$ and correspondingly the associated second cohomology
	group is with respect to a trivial action of $G$ on $R^*$. See
	\cite[Chapter 3]{KarpilovskyProjective} for details. We denote the second cohomology group of $G$ with values in $R^*$ by $H^2(G,R^*)$, the cohomology class of $\alpha \in Z^2(G,F^*)$ by $[\alpha]$ and call $G$ a group base of $R^\alpha G$.
	
	\begin{definition}
		Let $R$ be a commutative ring and let $G$ and $H$ be finite groups. We say that $G \sim_R H$ if there exists a group isomorphism
		$$\psi :H^2(G,R^*)\rightarrow H^2(H,R^*)$$
		such that for any $[\alpha] \in H^2(G,R^*)$,
		$$R^{\alpha}G\cong R^{\psi (\alpha)}H.$$
	\end{definition}
	
The main problem we are interested in is the following.
	\begin{Problem3}
		For a given commutative ring $R$, determine the $\sim_R$-classes.
		Answer in particular, for which groups $G \sim_R H$ implies $G \cong H$.
	\end{Problem3}
	In \cite{MargolisSchnabel} we investigated (TGRIP) over the
	complex numbers and gave some results for families of groups, e.g.
	abelian groups, $p$-groups, groups of central type and groups of
	cardinality $p^4$ and $p^2q^2$ for $p,q$ primes. In \cite{MargolisSchnabel2} we continued our investigation of the (TGRIP) considering also other fields, but under the condition that any cohomology class contains a cocycle of finite order. This covers, in particular, twisted group algebras over the complex and real numbers and any finite field. In this situation in \cite{MargolisSchnabel2} we introduced for a group $G$ a so-called Yamazaki cover, generalizing the Schur cover of $G$ in the sense that any projective representation of $G$ is projectively equivalent to a linear representation of its Yamazaki cover. Thus a Yamazaki cover can be used to translate the problem of describing the Wedderburn decomposition of a twisted group algebra of $G$ to a problem of describing the Wedderburn decomposition of a group algebra of the Yamazaki cover of $G$. 
	
Such a translation can not be expected  when we consider cohomology classes which do not contain a cocycle of finite order, as it is the case for instance for an abelian group over the rationals. In this case semi-simple twisted group algebras might contain Wedderburn components which are not a Wedderburn component of any finite-dimensional group algebra over the given field and furthermore the cohomology groups which one has to investigate might be infinitely-generated. In this paper we develop tools to study the (TGRIP) over any field and apply those tools to several examples. 

A main motivation, based on the results in \cite{MargolisSchnabel, MargolisSchnabel2} and continuing our previous work, is also to explore:
	\begin{enumerate}
		\item[(a)] The differences between the (TGRIP) and the (GRIP).
%		\item The differences between the (TGRIP) over $\mathbb{C}$ and the (TGRIP) over other fields.
		\item[(b)] The differences between the (TGRIP) over fields where any cohomology class contains a cocyle of finite order and the (TGRIP) over general fields.
	\end{enumerate}

The complex and rational numbers play a special role in the semi-simple linear representation theory of finite groups, in the sense that if $F$ is a field of characteristic not dividing the order of a finite group $G$, then
\begin{equation}\label{eq:CandQ}
FG \cong FH \Rightarrow \C G \cong \C H \ \ \text{and} \ \ \Q G \cong \Q H \Rightarrow FG \cong FH. 
\end{equation}
Cf. \cite[p. 220]{MargolisSchnabel2} and \cite[Theorem 14.1.9]{Pas77} for proofs of these facts. Hence one could say that in the semi-simple case of the (GRIP) the complex numbers ``know the least'', while the rationals ``know the most''. In \cite[Examples 3.1]{MargolisSchnabel2} we already observed that $\C$ is not the least knowing field for the (TGRIP) by exhibiting two non-isomorphic abelian groups $G$ and $H$ such that $G \sim_F H$ for a certain finite field $F$, while $G \sim_\C H$ and $G$ abelian implies $G \cong H$ \cite[Lemma 1.2]{MargolisSchnabel}. The second part of \eqref{eq:CandQ} leads to:

\begin{question}\label{que:QvsRest}
		Are there non-isomorphic finite groups $G$ and $H$ and a field $F$ with the characteristic of $F$ not dividing the order of $G$ such that $G \sim_{\mathbb{Q}} H$ but $G\not \sim _{F} H$?
\end{question}

Besides the absence of a Yamazaki cover, a major difficulty is that unlike in the work over $\C$ in \cite{MargolisSchnabel}, or over finite fields in \cite{MargolisSchnabel2}, over $\mathbb{Q}$ or a general field $F$, division algebras can appear in the Wedderburn decomposition of twisted group algebras, so here the Brauer group of the group base over $F$ comes into play. This leads to deep number-theoretical questions which are in general hard to answer.

As for groups of order at most $16$ the rational group algebra determines the isomorphism type of the group base, in the structural sense the smallest non-isomorphic groups which have isomorphic rational group algebras are the two non-abelian groups of order $p^3$ for $p$ an odd prime. By ~\eqref{eq:CandQ} these groups have isomorphic group algebras over any field of characteristic different from $p$. It turns out that the answer of the (TGRIP) in this case depends on a number theoretical condition.

\begin{Theorem 1}\emph{
Let $p$ be an odd prime, $G$ and $H$ the two non-isomorphic non-abelian groups of order $p^3$ and $F$ a field of characteristic different from $p$. Denote by $\zeta$ a primitive $p$-th root of unity in an extension of $F$. }

\emph{Then $G \sim_F H$ if and only if $F$ does not contain a primitive $p$-th root of unity and $\zeta$ is in the image of the norm map of the field extension $F(\sqrt[p]{\lambda}, \zeta)/F(\zeta)$ for all $\lambda \in F^*$. In particular, $G \not\sim_\Q H$.}
\end{Theorem 1} 
	
The next natural candidates to answer Question~\ref{que:QvsRest} are given by a triple of groups of order $81$ and two pairs of groups of order $p^4$ for $p$ a prime bigger than $3$. These groups have isomorphic group algebras over the rationals and we can completely solve the (TGRIP) for them.

\begin{Theorem 2} \emph{Let $p$ be an odd prime and $F$ a field of characteristic different from $p$. Denote by $G_i$ the $i$-th group of order $p^4$ in the SmallGroupLibrary of GAP.
\begin{itemize}
\item[(a)] Let $p=3$. Then
\begin{itemize}
\item[(i)] $G_8 \sim_F G_{10}$.
\item[(ii)]  $G_8 \sim_F G_9$ if and only if $F$ does not contain a primitive $3$-rd root of unity.
\end{itemize}
\item[(b)] Let $p > 3$. Then
\begin{itemize}
\item[(i)]  $G_9 \sim_F G_{10}$.
\item[(ii)] $G_7 \sim_F G_8$ if and only if $F$ does not contain a primitive $p$-th root of unity.
\end{itemize} 
\end{itemize}
In particular, Question~\ref{que:QvsRest} has a positive answer.}
\end{Theorem 2}

Theorem 2 exhibits for each odd prime $p$ a pair of groups $G$ and $H$ of order $p^4$ such that $G\sim_\Q H$, but not $G\sim_F H$ for any field $F$ of characteristic different from $p$. Hence we obtain an answer to Question~\ref{que:QvsRest}. Theorem 2 also exhibits a pair of groups of order $p^4$ which satisfy the relation of the (TGRIP) over any field of characteristic different from $p$. Note that if $F$ is a field of characteristic $p$, then $FG \not\cong FH$ for $G$ a group of order $p^4$ and $H$ a group not isomorphic to $G$ \cite[pp. 671-675]{Pas77}, hence in this case $G \not\sim_F H$.
	
Concerning the (TGRIP) over all fields simultaneously, including those of characteristic dividing the order of the group base, we study a twisted version of a question posed by Brauer \cite[Problem $2^{*}$]{Brauer63}: For finite groups $G$ and $H$ such that $FG \cong FH$ for any field $F$, is it necessary true that $G$ and $H$ are isomorphic? In \cite[Problem 1.2]{MargolisSchnabel2} we suggested a twisted version of this question.

\begin{problem}\label{TwistedBrauer}
Are there non-isomorphic finite groups $G$ and $H$ such that $G\sim_F H$ over any field $F$?
\end{problem} 

Dade \cite{Dade} constructed a family of pairs of groups $G$ and $H$ of order $p^3q^6$ which provided a counterexample for the problem of Brauer. Here $p$ and $q$ are any pair of primes satisfying $q \equiv 1 \bmod p^2$.
In \cite[Theorem 1]{MargolisSchnabel2} we proved that if $G$ and $H$ are the groups from Dade's example and they have even order, then
there exists an infinite number of fields $F$ such that $G\not \sim_{F} H$. Here we can remove the condition on the order and show the following:

\begin{Theorem 3}\emph{
For each pair of groups $G$ and $H$ from Dade's example $G\not \sim_{\Q} H$.}
\end{Theorem 3}

Therefore, we do not know the answer to Problem~\ref{TwistedBrauer}, but we know that the classical counterexample for Brauer's problem is not a counterexample to the twisted version. See Remark~\ref{rem:RoggenkampAndHertweck} for other candidates for an answer to Problem~\ref{TwistedBrauer}.

While for a finite group $G$ and a normal subgroup $N$ there exists a natural surjective ring homomorphism $FG \rightarrow F(G/N)$, this is not always the case for twisted group algebras. A crucial role in our proofs of Theorems 1-3 is played by a theorem, which is also of independent interest, and which allows to write a twisted groups algebra $F^\alpha G$ as a direct sum of twisted group algebras of $G/N$ when $N$ is a central subgroup, $\alpha$ a cocycle inflated from a cocycle of $G/N$ and the characteristic of $F$ does not divide the order of $N$. See Theorem~\ref{th:NEWINFSTUFF} for more details.

In view of a possible solution of Problem~\ref{TwistedBrauer} it would be of interest to determine if an implication like the second part of \eqref{eq:CandQ} could be formulated for given groups $G$ and $H$ and a finite number of fields, i.e.:

\begin{problem}
Does there exists a general procedure to determine some fields $F_1,\ldots,F_n$, depending on given finite groups $G$ and $H$, such that $G\sim_{F_i}H $ for $1 \leq i \leq n$ implies $G \sim_F H$ for any field $F$?
\end{problem}

We do not know an answer to this problem.

We also answer one more question for complex twisted group algebras which remained open in \cite{MargolisSchnabel}. 
It is clear that if $G\sim _F H$, for some field $F$ and finite groups $G$ and $H$, then $H^2(G,F^*) \cong H^2(H,F^*)$ and the following condition holds:\\

\textit{$F$-bijectivity condition:} There exists a set bijection $\varphi:H^2(G,F^*) \rightarrow H^2(H,F^*)$ such that
		$F^\alpha G\cong F^{\varphi(\alpha)}H$ for any $[\alpha] \in H^2(G,F^*)$.\\
		
The $F$-bijectivity condition in particular implies, that $FG \cong FH$. We study this condition especially for $F$ being the complex numbers and write $H^2(G,\C^*) = M(G)$.
	Neither the condition $M(G) \cong M(H)$ nor the $\C$-bijectivity condition by itself is sufficient to imply $G\sim_\C H$ \cite[Theorem 1.6]{MargolisSchnabel}. 	Since introducing the (TGRIP) we tried to answer the following questions. 

	\begin{question}\label{Q:suff_conditions}
		Let $G$ and $H$ be finite groups. 
		\begin{itemize}
			\item[(i)] Do $M(G) \cong M(H)$ and the $\C$-bijectivity condition together imply $G \sim_\C H$?
			\item[(ii)] If the answer to the above is negative what additional conditions will imply $G \sim_\C H$?
		\end{itemize}
	\end{question}

We answer this question in the following way.

\begin{Theorem 4}\emph{
There exist finite groups $G$ and $H$ of order 288 such that $M(G) \cong M(H)$ and the $\C$-bijectivity condition holds for $G$ and $H$, but $G \not \sim_\C H$.}

\emph{However, if $M(G) \cong M(H)$ and the $\C$-bijectivity condition holds for $G$ and $H$, and additionally $M(G)$ is cyclic or isomorphic to an elementary abelian group of order $4$ or $9$, then $G \sim_\C H$. }
\end{Theorem 4}
	
A theorem, which is important on its own, which is crucial for the proof of Theorem 4 is Theorem~\ref{th:isomiffcoprime} stating that 
\[\C ^{\alpha}G\cong \C ^{\alpha ^r}G \Leftrightarrow r \ \text{is coprime with the order of} \ [\alpha].\] 	

The paper is organized as follows.
In \S\ref{prelim} we recall definitions and results we will use throughout the paper and prove some auxiliary results which will be used later. In \S\ref{Section3} for a group $G$ and $r\in \mathbb{N}$ we prove that 
, $\C ^{\alpha}G\cong \C ^{\alpha ^r}G$ if and only if $r$ is prime to the order of $[\alpha]$ in $M(G)$. In \S\ref{sec:Proof of Theorem 4}  we prove Theorem 4. More specifically, in \S\ref{sec:PosStuffOnC} we prove the second part of Theorem 4 and in \S\ref{Negative} we prove the first part.	In \S\ref{StructureTheorem} we prove how under certain conditions a twisted group algebra can be decomposed as a direct sum of twisted group algebra of a quotient group. This generalizes a known result for group algebras \cite[Theorem 3.2.9]{KarpilovskyProjective}. The main results of \S\ref{Rational} are Theorems 1 and 2, but we also include the description of twisted group algebras for elementary abelian $p$-groups of rank $2$. The main result of \S\ref{Dade} is Theorem 3.

\section{Preliminaries}\label{prelim}
	Throughout the whole paper $F$ denotes a field and its unit group is denoted by $F^*$. By $G$ we always denote a finite group. All the groups which will appear as the basis of a group algebra or twisted group algebra will be assumed to be finite. For a twisted group algebra $F^\alpha G$ we denote by $\{u_g \}_{g \in G}$ the basis corresponding to the elements of $G$ on which multiplication is explained by the cocycle $\alpha$. All cohomology groups are taken with respect to a trivial module structure. We often define a cohomology class $[\alpha] \in H^2(G, F^*)$ by giving the multiplication relations in $F^\alpha G$ where we frequently omit the relations in the basis $\{u_g \}_{g \in G}$ of $F^\alpha G$ which remain unchanged from $FG$. 
	
	We write $C_n$ for the cyclic group of order $n$. Moreover $M_n(F)$ denotes the $n \times n$-matrix ring over $F$. When $A$ is an $F$-algebra, $x \in A$ and $u$ a unit in $A$ we write $x^u = u^{-1}xu$.
	
	\subsection{The second cohomology group of a finite group and projective representations}\label{sec:H2}
We start by giving some basic results on the second cohomology group which we will need in the sequel. The results here are classical and contained in \cite{KarpilovskyProjective}.
We will use without additional explanation the known structure of the second cohomology group of abelian groups (see \cite{Schur} for the complex case and e.g. \cite[Corollary in \S 2.2]{YamazakiAbelian} for the general case).
	
The second cohomology group of a group $G$ over the complex numbers is denoted by $M(G)$ and is called the \emph{Schur multiplier}.
For an abelian group $A$ we denote by $Z^2(G,A)$ the $2$-cocycles of $G$ with values in $A$. For a cocycle $\alpha \in Z^2(G,A)$ we denote by $[\alpha]$ the cohomology class of $\alpha$ in $H^2(G,A)$. Moreover, $\alpha$ is \emph{symmetric} if $\alpha(x,y) = \alpha(y,x)$ for all $x,y \in G$. If $G$ is also abelian we set
\[\Ext(G,A) = \{[\alpha] \in H^2(G,A) \ | \ \alpha \ \text{is symmetric} \}. \]

An important tool to understand $H^2(G,F^*)$ is the following exact sequence.
\begin{theorem}\label{th:UCT}\cite[Theorem 11.5.2]{KarpilovskyVolII}
There is the following split short exact sequence
	\begin{equation*}
	1\rightarrow \operatorname{Ext}(G/G',F^*) \rightarrow
	H^2(G,F^*)\rightarrow \operatorname{Hom}(M(G),F^*)\rightarrow 1.
	\end{equation*}
\end{theorem}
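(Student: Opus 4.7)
The plan is to recognize this statement as a special case of the universal coefficient theorem (UCT) for group cohomology with trivial coefficients in $F^*$. Under the Hurewicz identification $H_1(G,\Z) \cong G/G'$ and the definition $H_2(G,\Z) \cong M(G)$, the UCT applied to the cochain complex $\Hom_{\Z G}(P_*, F^*)$ for any free $\Z G$-resolution $\cdots \to P_1 \to P_0 \to \Z \to 0$ yields a split short exact sequence
\[1 \to \Ext(H_1(G,\Z), F^*) \to H^2(G, F^*) \to \Hom(H_2(G,\Z), F^*) \to 1,\]
which is precisely the claimed sequence. So the first step is to invoke the algebraic UCT for this cochain complex, whose underlying chain complex consists of free abelian groups, and to substitute in the two identifications above.

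Next I would make the boundary maps explicit to check that they match the natural ones. The map $\Ext(G/G', F^*) \hookrightarrow H^2(G, F^*)$ is inflation: a class in $\Ext(G/G', F^*)$ corresponds to a symmetric cocycle on $G/G'$, whose inflation to $G$ stays symmetric as a function on $G \times G$. For the surjection onto $\Hom(M(G), F^*)$ I would use Hopf's formula $M(G) = (R \cap [\mathcal F, \mathcal F])/[\mathcal F, R]$ for a free presentation $1 \to R \to \mathcal F \to G \to 1$: a class $[\alpha]$ corresponds to a central extension $E = \mathcal F / S$ with $S \subseteq R$, $R/S \cong F^*$, and $[\mathcal F, R] \subseteq S$ (since $F^*$ is central in $E$), and the projection $R \to R/S$ then induces a well-defined homomorphism $M(G) \to F^*$ depending only on $[\alpha]$. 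Exactness in the middle then reduces to the statement that a class is inflated from $G/G'$ if and only if the induced map $M(G) \to F^*$ is trivial, which follows directly from these two descriptions: a cocycle inflated from $G/G'$ corresponds to an $E$ in which $[\mathcal F, \mathcal F] \subseteq S$, killing $M(G)$, and conversely.

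The main obstacle is the splitting. It admits no canonical description in general, but is built into the algebraic UCT for chain complexes of free abelian groups, which automatically produces a (non-canonical) section. The delicate point is matching this abstract splitting with the concrete description of the maps above, which requires verifying that the inflation and transgression maps I constructed actually coincide with the boundary maps produced by the UCT; this is where I would expect to spend most of the care.
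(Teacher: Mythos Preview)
The paper does not prove this theorem; it simply cites it as \cite[Theorem 11.5.2]{KarpilovskyVolII} and uses it as a black box. So there is no paper proof to compare against.

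Your sketch is correct and is exactly the standard argument: for trivial coefficients one has $\Hom_{\Z G}(P_*,F^*)\cong \Hom_\Z(P_*\otimes_{\Z G}\Z,F^*)$, so the algebraic Universal Coefficient Theorem applies to the chain complex $P_*\otimes_{\Z G}\Z$ of free abelian groups, yielding the split short exact sequence with $H_1(G,\Z)\cong G/G'$ and $H_2(G,\Z)\cong M(G)$. Your identification of the injection with inflation agrees with what the paper states immediately after the theorem, and your description of the surjection via Hopf's formula is also the standard one. The only caveat is your remark that matching the abstract UCT maps with inflation and transgression ``is where I would expect to spend most of the care'': in practice this is routine once one uses the bar resolution, since the UCT maps are then visibly the ones you describe; the non-canonical splitting is the only genuinely inexplicit part, and you handle that correctly by appealing to the general UCT.
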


	Let $n_1,\ldots,n_r$ be natural numbers. Recall that \cite[Corollary 2.3.17]{KarpilovskyProjective}
	\begin{equation}\label{eq:EXTdecomposition}
	\operatorname{Ext}(\Pi _{i=1}^rC_{n_r}, F^*) \cong \Pi _{i=1}^r
	\operatorname{Ext}(C_{n_r}, F^*).
	\end{equation}
	Therefore, in order to understand $\operatorname{Ext}(G/G',F^*)$
	it is sufficient to understand the description of
	$\operatorname{Ext}(C_n,F^*)\cong H^2(C_n,F^*)$. This is well
	known \cite[Theorem 1.3.1]{KarpilovskyProjective}:
	\begin{equation}\label{eq:cohoofcyclic}
	\operatorname{Ext}(C_n,F^*)\cong H^2(C_n,F^*)\cong F^*/(F^*)^n.
	\end{equation}

The map from $\Ext(G/G',F^*)$ to $H^2(G,F^*)$ in Theorem~\ref{th:UCT} is known as the \emph{inflation map} and we denote it as $\operatorname{inf}$. More general for a normal subgroup $N$ of $G$ and $\alpha \in Z^2(G/N, F^*)$ one has ${\operatorname{inf}}(\alpha)(x,y) = \alpha(xN, yN)$ for all $x,y \in G$ and this map implies a homomorphism from $H^2(G/N,F^*)$ to $H^2(G,F^*)$. 

If $V$ is an $F$-vector space, a map $\eta: G \rightarrow {\operatorname{GL}}(V)$, such that the composition of
$\eta$ with the natural projection from $GL(V)$ to $PGL(V)$ is a group homomorphism is called a \emph{projective representation} of $G$.
The projective representation $\eta$ is {\it irreducible}, if $V$ admits no proper $G$-invariant subspace. Two projective representations $\eta_1: G \rightarrow GL(V_1)$ and $\eta_2: G \rightarrow GL(V_2)$ are called \textit{projectively equivalent} if there is a map $\mu: G \rightarrow F^*$ satisfying $\mu(1) = 1$ and a vector space isomorphism $f:V_1 \rightarrow V_2$ such that
$$\eta_1(g) = \mu(g) f^{-1} \eta_2(g) f $$
for every $g \in G$.

If we define now a map $\alpha: G \times G \rightarrow F$ by 
$$\alpha(g_1,g_2)=\eta (g_1) \eta(g_2) \eta (g_1g_2)^{-1},$$
then $\alpha$ lies in $Z^2(G,F^*)$ and we refer to $\eta$ as an \emph{$\alpha$-representation} of $G$. For a fixed $\alpha \in Z^2(G,F^*)$, the set of projective equivalence classes of $\alpha$-representations of $G$ is denoted by $\text{Proj}(G, \alpha)$. As in the ordinary case, there is a natural correspondence between projective representations of $G$ over $F$ with an associated cohomology class $[\alpha]$ and $F^\alpha G$-modules. In particular, if the characteristic of $F$ does not divide the order of $G$, then $F^\alpha G$ is the direct sum of the simple $F^\alpha G$-modules which correspond to the projective equivalence classes of $\alpha$-representations. See \cite[Section 3.1]{KarpilovskyProjective} for more details.

We now record some facts on the Schur multiplier. We first recall that for any finite group $G$ there exists a \emph{Schur cover} $S_G$, which is also a finite group, such that $\C S_G \cong \oplus_{[\alpha] \in H^2(G,\C^*)} \C^\alpha G$.
	
Recall that a group action of a group $T$ on a group $N$ induces an action of $T$ on $M(N)$.
\begin{lemma} \cite[Section 2.2]{KarpilovskySchur}\label{lemma:semicoprimeSchur}
	Let $N$ and $T$ be subgroups of $G$ and assume $G = N \rtimes T$. Then $M(N)^T$ is a subgroup of $M(G)$. Moreover, if $N$ and $T$ are of coprime order, then
	\begin{equation*}
	M(G)=M(T)\times M(N)^T.
	\end{equation*}
	Here $M(N)^T$ consists precisely of those elements in $M(N)$ which are invariant under the $T$-action.
\end{lemma}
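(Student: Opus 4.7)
The plan is to apply the Lyndon--Hochschild--Serre spectral sequence associated to $1\to N\to G\to T\to 1$ with coefficients in the trivial module $\C^*$, together with the splitting $s\colon T\to G$ and, for the second assertion, the coprimality hypothesis.

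For the first statement I would exhibit two canonical subgroups. The inflation map $\operatorname{inf}\colon M(T)\to M(G)$ induced by the projection $G\to T$ has restriction along $s$ as a retraction, so $\operatorname{res}_T^G\circ\operatorname{inf}=\operatorname{id}_{M(T)}$ and $M(T)$ embeds as a direct summand of $M(G)$. The restriction $\operatorname{res}_N^G\colon M(G)\to M(N)$ takes values in the invariants $M(N)^T$, because the $T$-action on $M(N)$ is the one induced by conjugation inside $G$, and conjugation acts trivially on $H^*(G,\C^*)$, so any class restricted from $G$ is $T$-invariant. To obtain a section of $\operatorname{res}_N^G$ onto $M(N)^T$, I would extend a $T$-invariant cocycle $\alpha\in Z^2(N,\C^*)$ to a cocycle $\hat\alpha\in Z^2(G,\C^*)$ using the semidirect product structure together with $1$-cochains $\mu_t\in C^1(N,\C^*)$ witnessing $\alpha^t=\alpha\cdot\delta\mu_t$ for each $t\in T$, via a formula of the shape
\[
\hat\alpha\bigl(n_1 s(t_1),\,n_2 s(t_2)\bigr)=\alpha\bigl(n_1,\,{}^{t_1}n_2\bigr)\,\mu_{t_1}(n_2).
\]
The cocycle identity for $\hat\alpha$ follows from that of $\alpha$, the $T$-equivariance up to coboundary of the corrections $\mu_t$, and a cocycle relation among the $\mu_t$ themselves. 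Changing the choice of $(\mu_t)_{t\in T}$ alters $\hat\alpha$ by a coboundary on $G$, so $[\alpha]\mapsto[\hat\alpha]$ is a well defined embedding $M(N)^T\hookrightarrow M(G)$ that splits $\operatorname{res}_N^G$.

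For the second statement I would upgrade this to the short exact sequence
\[
1\to M(T)\xrightarrow{\operatorname{inf}}M(G)\xrightarrow{\operatorname{res}_N^G}M(N)^T\to 1,
\]
which combined with the two splittings above yields the internal direct product $M(G)=M(T)\times M(N)^T$. Exactness at $M(G)$ and surjectivity of restriction both amount to the vanishing of the differentials $d_r^{0,q}$ with $q\in\{1,2\}$ and $r\ge 2$ in the LHS spectral sequence. In each case the source is a subquotient of a cohomology group of $N$ with divisible coefficients and hence has exponent dividing $|N|$, while the target is a subquotient of a positive-degree cohomology group of $T$ and so has exponent dividing $|T|$; the coprimality of $|N|$ and $|T|$ forces every such differential to be zero.

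The main obstacle I anticipate is the first part: making the cocycle extension canonical enough to be a group homomorphism, which really comes down to checking that two choices of $(\mu_t)_{t\in T}$ produce cohomologous extensions on $G$ and that the dependence on the section $s$ only changes $\hat\alpha$ by a coboundary. Once this cocycle-level bookkeeping is in place, the coprime case is a clean spectral sequence argument using that there is no nonzero homomorphism between finite abelian groups of coprime exponent.
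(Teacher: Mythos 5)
The paper gives no argument for this lemma at all --- it is quoted from Karpilovsky's book on the Schur multiplier --- so there is no in-paper proof to measure you against. Your handling of the coprime ``Moreover'' part is the standard one and is essentially complete: $E_2^{p,q}=H^p(T,H^q(N,\C^*))$ is killed by $|T|$ for $p\geq 1$ and by $|N|$ for $q\geq 1$, so coprimality wipes out $E_2^{1,1}$ together with every differential touching $E_2^{0,2}$ and $E_2^{2,0}$, and the resulting extension of $M(N)^T$ by $M(T)$ splits for order reasons (or via the inflation splitting).

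The gap is in the first assertion, the one claimed \emph{without} coprimality. Your section of $\operatorname{res}^G_N$ presupposes that the cochains $\mu_t\in C^1(N,\C^*)$ witnessing $\alpha^t=\alpha\cdot\delta\mu_t$ can be chosen coherently in $t$, and you describe the needed compatibility as ``a cocycle relation among the $\mu_t$ themselves'' from which the cocycle identity for $\hat\alpha$ ``follows''. It does not: each $\mu_t$ is determined only up to an element of $Z^1(N,\C^*)=\operatorname{Hom}(N,\C^*)$, and the defect $(t_1,t_2)\mapsto \mu_{t_2}\,\mu_{t_1}^{t_2}\,\mu_{t_1t_2}^{-1}$ lands in $\operatorname{Hom}(N,\C^*)$ and defines a class in $H^2(T,\operatorname{Hom}(N,\C^*))$ which is precisely the obstruction $d_2^{0,2}([\alpha])$ to extending $[\alpha]$ to an honest $2$-cocycle of $G$. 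Splitness of $1\to N\to G\to T\to 1$ kills the differentials \emph{into} the bottom row (which is why inflation from $T$ is injective), but it does nothing to the differentials \emph{out of} the fiber column, and $H^2(T,\operatorname{Hom}(N,\C^*))$ is a finite group with no general reason to vanish --- it vanishes for free only under the coprimality you are not allowed to assume here. So as written your formula for $\hat\alpha$ need not satisfy the cocycle identity, and the embedding $M(N)^T\hookrightarrow M(G)$ is not established. The difficulty you flag (well-definedness up to coboundary and independence of the section $s$) is routine bookkeeping; the real issue is showing that this obstruction class vanishes in the split case, or else producing the embedding by a different mechanism, and your proposal is silent on exactly that point.
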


Set 
$$c_{G,\alpha}=\text{gcd} \{ \text{dim}(\eta) \ | \ \eta \in \text{Proj}(G,\alpha) \},$$
and denote, for a prime $p$, the biggest power of $p$ dividing $c_{G,\alpha}$ by $(c_{G,\alpha})_p$. 
\begin{proposition}\label{prop:Higgs88}\cite[Lemma 1 and Proposition 1]{Higgs1988}
	With the above notation
	\begin{enumerate}
		\item $\circ ([\alpha])$ divides $c_{G,\alpha}$.
		\item If a prime $p$ divides $c_{G,\alpha}$, then $p$ divides $\circ ([\alpha])$.
		\item  For a Sylow $p$-subgroup $G_p$ of $G$
		$$(c_{G,\alpha})_p=c_{G_p,\alpha}$$
		where on the right side $\alpha $ is the restriction of $\alpha $ to $G_p$.
	\end{enumerate}
\end{proposition}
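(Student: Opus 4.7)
My plan is to prove (1) directly, then (3), and deduce (2) from (1) and (3).

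For (1), the idea is a determinant trick: for any $\alpha$-representation $\eta \colon G \to \operatorname{GL}(V)$ of dimension $n$, applying $\det$ to $\eta(g)\eta(h) = \alpha(g,h)\eta(gh)$ gives $\alpha(g,h)^n = \det\eta(g)\det\eta(h)\det\eta(gh)^{-1}$, so $\alpha^n$ is a coboundary and hence $\circ([\alpha]) \mid n$. Taking the gcd over all (irreducible) $\alpha$-reps yields $\circ([\alpha]) \mid c_{G,\alpha}$.

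For (3), I would establish the two divisibility relations separately. The inequality $c_{G_p,\alpha|_{G_p}} \mid (c_{G,\alpha})_p$ follows by restriction: for an irreducible $\alpha$-rep $\eta$ of $G$, the restriction $\eta|_{G_p}$ is an $\alpha|_{G_p}$-rep whose irreducible constituents have dimensions divisible by $c_{G_p,\alpha|_{G_p}}$. Since $|G_p|$ annihilates $H^2(G_p,F^*)$, the class $[\alpha|_{G_p}]$ has $p$-power order, and standard projective representation theory of $p$-groups (via a central extension which is itself a $p$-group) forces all irreducible $\alpha|_{G_p}$-dimensions, and hence $c_{G_p,\alpha|_{G_p}}$, to be powers of $p$. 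Summing constituent dimensions of $\eta|_{G_p}$ gives $\dim\eta$, so $c_{G_p,\alpha|_{G_p}} \mid \dim\eta$, and taking the $p$-part followed by the gcd over $\eta$ yields the claim. For the reverse inequality $(c_{G,\alpha})_p \mid c_{G_p,\alpha|_{G_p}}$, I would use projective induction as developed in \cite{KarpilovskyProjective}. Starting from an irreducible $\alpha|_{G_p}$-rep $\phi$ of $G_p$ of dimension $c_{G_p,\alpha|_{G_p}}$, induce it to an $\alpha$-rep $\phi^G$ of $G$ of dimension $[G:G_p] \cdot c_{G_p,\alpha|_{G_p}}$. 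Decomposing $\phi^G = \bigoplus_i e_i \eta_i$ into irreducible $\alpha$-reps, each $\dim\eta_i$ is divisible by $c_{G,\alpha}$, so taking $p$-adic valuations and using the ultrametric inequality,
\[ v_p\bigl([G:G_p]\cdot c_{G_p,\alpha|_{G_p}}\bigr) = v_p\Bigl(\sum_i e_i \dim\eta_i\Bigr) \geq v_p(c_{G,\alpha}), \]
and since $p \nmid [G:G_p]$ this yields $(c_{G,\alpha})_p \mid c_{G_p,\alpha|_{G_p}}$. I expect the main bookkeeping obstacle to be verifying the projective induction construction so that the induced representation carries exactly the cocycle $\alpha$ and has the claimed dimension.

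Finally, (2) follows by combining (1) and (3): if $p \mid c_{G,\alpha}$ then by (3), $p \mid c_{G_p,\alpha|_{G_p}}$; in particular $c_{G_p,\alpha|_{G_p}} > 1$, which rules out any $1$-dimensional $\alpha|_{G_p}$-representation, so $[\alpha|_{G_p}] \neq 1$. Since this class has $p$-power order, its order is divisible by $p$. As the restriction map $H^2(G,F^*) \to H^2(G_p,F^*)$ sends $[\alpha]$ to $[\alpha|_{G_p}]$ and the order of an image divides the order of its source, $p \mid \circ([\alpha])$.
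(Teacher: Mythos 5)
The paper does not prove this proposition at all --- it is quoted verbatim from Higgs's 1988 paper (Lemma~1 and Proposition~1 there), so there is no in-paper argument to compare against. Your proof is correct and is essentially the classical one: the determinant trick for (1), restriction plus projective induction for the two divisibilities in (3), and (2) as a corollary. Two small points are worth making explicit. First, in the induction step you need an irreducible $\alpha|_{G_p}$-representation of dimension \emph{exactly} $c_{G_p,\alpha|_{G_p}}$; this is where the $p$-group hypothesis is really used, since a gcd of powers of $p$ is their minimum and hence is attained (for a general subgroup the gcd need not be realized by any single representation). You have all the ingredients for this --- you proved every irreducible $\alpha|_{G_p}$-dimension is a $p$-power via Brauer's trick and the central extension --- but the attainment should be stated, as the induction argument collapses without it. Second, in (2) the implication ``$c_{G_p,\alpha|_{G_p}}>1$ rules out a one-dimensional representation, hence $[\alpha|_{G_p}]\neq 1$'' is fine, but the cleanest phrasing is the contrapositive: if $[\alpha|_{G_p}]$ were trivial then $\C^{\alpha|_{G_p}}G_p\cong \C G_p$ admits the trivial one-dimensional module, forcing the gcd to be $1$. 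With those two sentences added the argument is complete.
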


Another important result when working over the complex numbers is sometimes referred to as ``Brauer's Trick".
\begin{proposition}\label{BrauerTrick}\cite[Theorem 2.3.2]{KarpilovskyProjective}
Let $\alpha \in Z^2(G,\C ^*)$. Then there exists a cocycle $\alpha' \in Z^2(G,\C ^*)$ which is cohomologous to $\alpha$ such that $\alpha'(g,h)$ is a root of unity of order dividing the order of $[\alpha]$ for any $g,h\in G$.
\end{proposition}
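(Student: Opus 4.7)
The plan is to exploit two key features of $\C^*$: it is a divisible abelian group, and the Schur multiplier $M(G)$ of the finite group $G$ is finite, so that $[\alpha]$ has a well-defined finite order $m$. Given these, I would construct $\alpha'$ as an explicit coboundary-modification of $\alpha$ whose $m$-th power equals $1$ pointwise.

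First I would set $m$ to be the order of $[\alpha]$ in $M(G)$. By definition the pointwise $m$-th power $\alpha^m$ is a coboundary, so there exists a normalized cochain $\beta \colon G \to \C^*$ with
\[ \alpha(g,h)^m = \beta(g)\beta(h)\beta(gh)^{-1} \qquad \text{for all } g,h \in G. \]
Because $\C^*$ is divisible, for every $g \in G$ one can choose a square-root tower and in particular an $m$-th root $\gamma(g) \in \C^*$ with $\gamma(g)^m = \beta(g)$, normalized so that $\gamma(1) = 1$.

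Next I would define
\[ \alpha'(g,h) = \alpha(g,h)\,\gamma(g)^{-1}\gamma(h)^{-1}\gamma(gh). \]
This differs from $\alpha$ by the coboundary of $\gamma^{-1}$, hence $[\alpha'] = [\alpha]$ in $H^2(G,\C^*)$. A direct computation then yields
\[ \alpha'(g,h)^m = \alpha(g,h)^m \gamma(g)^{-m}\gamma(h)^{-m}\gamma(gh)^m = \bigl(\beta(g)\beta(h)\beta(gh)^{-1}\bigr)\bigl(\beta(g)^{-1}\beta(h)^{-1}\beta(gh)\bigr) = 1, \]
so every value $\alpha'(g,h)$ is an $m$-th root of unity, as required.

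There is no serious obstacle here: the argument depends only on the divisibility of $\C^*$ (to extract $\gamma$) and on the finiteness of $M(G)$ (to guarantee that $m$ exists). The only mild care needed is the simultaneous normalization $\beta(1)=\gamma(1)=1$, which is automatic from the cocycle identity applied at $g=h=1$. Note that the same proof works verbatim over any field $F$ whose unit group $F^*$ is divisible by the order of $[\alpha]$, which is why a literal analogue of the Trick of Brauer fails over general fields and motivates much of the machinery developed elsewhere in the paper.
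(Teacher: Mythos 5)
Your proof is correct and is exactly the standard argument behind this result (the paper itself only cites \cite[Theorem 2.3.2]{KarpilovskyProjective} without reproducing a proof): take $m$ the order of $[\alpha]$, write $\alpha^m$ as the coboundary of $\beta$, extract pointwise $m$-th roots $\gamma$ of $\beta$ using divisibility of $\C^*$, and twist $\alpha$ by the coboundary of $\gamma^{-1}$ to force all values to be $m$-th roots of unity. The computation $\alpha'(g,h)^m=1$ is verified correctly, and your closing remark about which fields admit the same trick is consistent with why the paper needs different machinery over general fields.
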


\subsection{Cyclic algebras and symbol algebras}\label{sec:Algebras}
We will need some basic facts about some well-known classes of central simple algebras. We fix here our notation for these algebras and cite the results on them which will be useful for our investigations. We refer to \cite[Chapter 15]{Pierce} and \cite[Chapter VII]{BO13} for the facts on cyclic algebras. We also prove a lemma of number theoretical nature which will be important later to disprove isomorphisms between twisted group algebras.

\begin{definition}\label{def:CyclicAlgebra} 
Let $K/F$ be a finite cyclic field extension of degree $n$ with Galois group generated by an element $\delta$ and let $\lambda \in F^*$. Let $A$ be the $K$-algebra which contains an element $a$ such that as a $K$-vector space we have $A = \oplus_{i=0}^{n-1} a^i K$ and for each $\mu \in K$ we have $\mu a = a\delta(\mu)$ and $a^n = \lambda$. Then $A$ is called a \emph{cyclic algebra} which we denote by $(\lambda,K/F,\delta)$.   
\end{definition}

We recall that two simple $F$-algebras $A$ and $B$ are called \emph{Brauer equivalent} if and only if there exists a division $F$-algebra $D$ and integers $n$ and $m$ such that $A \cong M_n(D)$ and $B \cong M_m(D)$. We write $A \equiv B$ to express Brauer equivalence. Moreover, we write $A \equiv 1$, if $A$ is isomorphic to a matrix ring over $F$. 

\begin{proposition}\label{prop:CyclicAlgebra}\cite[Proposition VII.1.9]{BO13}
The algebra $(\lambda,K/F,\delta)$ is a central simple $F$-algebra. If $L$ is a field such that $F \subseteq L \subseteq K$, then $(\lambda,K/F,\delta) \otimes_F L$ is Brauer equivalent to $(\lambda,K/L, \tilde{\delta})$ where $\tilde{\delta} = \delta^{[L:F]}$.
\end{proposition}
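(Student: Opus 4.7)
The plan is to establish the two assertions separately, exploiting the explicit presentation $A := (\lambda,K/F,\delta) = \bigoplus_{i=0}^{n-1} a^i K$ with $\mu a = a\delta(\mu)$ for $\mu \in K$, $a^n = \lambda$, and $n = [K:F]$. For central simplicity over $F$, write $z = \sum_i a^i k_i$ and note that $\mu a^i = a^i \delta^i(\mu)$; commutation $\mu z = z\mu$ with every $\mu \in K$ yields $\delta^i(\mu) k_i = \mu k_i$ for all $i$ and $\mu$, and since $\delta^i$ acts non-trivially on $K$ for $0 < i < n$, this forces $z \in K$, after which $az = za$ collapses $z$ further into $K^{\langle \delta \rangle} = F$. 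For simplicity, given a non-zero two-sided ideal $I$, after left-multiplication by a suitable power of the unit $a$ one may assume $I$ contains some $z = \sum_{i \in S} a^i k_i$ with $0 \in S$ and $|S|$ minimal. Choosing $\mu \in K$ outside the proper subfields $K^{\langle \delta^i \rangle}$ for $i \in S \setminus \{0\}$, the element $\mu z \mu^{-1} - z \in I$ has support exactly $S \setminus \{0\}$, contradicting minimality unless $|S| = 1$; then $I$ contains the unit $a^j k_j$, so $I = A$.

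For the Brauer equivalence, set $m = [L:F]$ and $\tilde\delta = \delta^m$, which generates $\Gal(K/L)$, and consider the $L$-subalgebra $B := \bigoplus_{j=0}^{n/m - 1} (a^m)^j K \subseteq A$. The relations $(a^m)^{n/m} = \lambda$ and $\mu a^m = a^m \tilde\delta(\mu)$ produce a surjection $(\lambda, K/L, \tilde\delta) \twoheadrightarrow B$, which is an isomorphism by a dimension count ($(n/m)^2$ over $L$ on both sides). Since every element of $L = K^{\langle \tilde\delta \rangle}$ commutes with all of $K$ and with $a^m$, one has $B \subseteq C_A(L)$; conversely, the double centralizer theorem applied to $L \subseteq A$ gives $[C_A(L):L] = [A:F]/[L:F]^2 = (n/m)^2$, forcing $B = C_A(L)$. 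The concluding step is then the identification in $\mathrm{Br}(L)$ of $A \otimes_F L$ with $C_A(L) \cong (\lambda, K/L, \tilde\delta)$, which comes from the standard isomorphism $A \otimes_F L \cong M_m(C_A(L))$, itself a consequence of the double centralizer theorem.

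The main technical obstacle is this last Brauer-equivalence step: the argument above is short but treats the double centralizer theorem and the isomorphism $A \otimes_F A^{\mathrm{op}} \cong \mathrm{End}_F(A)$ as black boxes. A fully hands-on alternative would construct pairwise orthogonal idempotents $e_1, \dots, e_m \in L \otimes_F L \subseteq A \otimes_F L$ from the Galois decomposition $L \otimes_F L \cong L^m$, and then verify directly that each $e_j (A \otimes_F L) e_j \cong B$, realising an explicit isomorphism $A \otimes_F L \cong M_m(B)$; this is noticeably more painful. Everything else—central simplicity, the identification $B \cong (\lambda, K/L, \tilde\delta)$, and the equality $B = C_A(L)$—is essentially mechanical once the $K$-basis $\{a^i\}$ of $A$ is in hand.
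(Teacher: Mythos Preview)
The paper does not prove this proposition at all; it is quoted verbatim from \cite[Proposition VII.1.9]{BO13} as a standard fact about cyclic algebras and used as a black box later in Section~\ref{Dade}. So there is no ``paper's own proof'' to compare against.

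Your argument is a correct direct verification following the classical lines. The computation of the center and the minimal-support argument for simplicity are standard and sound; note that the existence of $\mu \in K$ avoiding all the proper subfields $K^{\langle \delta^i \rangle}$ uses that a field is never a finite union of proper subfields, which is worth stating explicitly. For the second part, your identification of $B = \bigoplus_j (a^m)^j K$ with $(\lambda, K/L, \tilde\delta)$ and with $C_A(L)$ via the double centralizer theorem is correct, and the concluding isomorphism $A \otimes_F L \cong M_m(C_A(L))$ is exactly the standard centralizer formula for a subfield of a central simple algebra (sometimes stated as: if $L \subseteq A$ is a subfield containing $F$, then $[A \otimes_F L] = [C_A(L)]$ in $\mathrm{Br}(L)$). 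Your remark that the ``hands-on'' alternative via idempotents in $L \otimes_F L$ is more painful is fair; note that it does work here because $L/F$, being an intermediate extension of the cyclic extension $K/F$, is itself Galois, so $L \otimes_F L \cong L^m$ holds.
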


An especially nice form of cyclic algebras is the following.

\begin{definition}\cite[Remark 24.3]{Row08}
Let $n$ be a natural number and $F$ a field containing an $n$-th primitive root of unity $\zeta$. For $\lambda, \mu \in F^*$ define an $F$-algebra $A$ which is generated by two elements $x$ and $y$ and the relations $x^n = \lambda$, $y^n = \mu$ and $yx = \zeta xy$. Then this algebra is called a \emph{symbol algebra} and is denoted by $(\lambda, \mu; F; \zeta)_n$. When the field $F$ and the element $\zeta$ is clear from the context we simply write $(\lambda, \mu)_n$. 
\end{definition}

Note that the algebra $(\lambda, \mu; F; \zeta)_n$ can also be viewed as a twisted group algebra of $G \cong C_n\times C_n = \langle g\rangle \times \langle h \rangle$ over $F$. Namely, there is a cohomology class $[\alpha]\in H^2(G,F^*)$ determined by the following relations in $F^{\alpha}G$,
$$(u_g)^n = \lambda, \ (u_h)^n = \mu, \ u_hu_g = \zeta u_gu_h.  $$
It is easy to see that $F^\alpha G \cong (\lambda, \mu; F; \zeta)_n$.\\

\textbf{Notation:} Let $K/F$ be a finite field extension and denote by $\Nr(K/F)$ the norm map of this extension. We write $a \in \Nr(K/F)$, if $a$ is in the image of $\Nr(K/F)$. 

%For the convenience of the reader we reproduce some known properties of symbol algebras.
\begin{lemma}\label{lem:SymbolAlgebras}\cite[Proposition 24.48]{Row08}, \cite[Proposition VII.1.9]{BO13}\\
Let $F$ be a field containing a primitive $n$-th root of unity $\zeta$ and set $(\lambda, \mu)_n = (\lambda, \mu; F; \zeta)_n$ for any $\lambda, \mu \in F^*$. The following holds.
\begin{enumerate}
\item $(\lambda, 1)_n \equiv 1$.
\item $(\lambda, \mu)_n \equiv 1$ if and only if $\mu \in \Nr(F(\sqrt[n]{\lambda})/F)$.
\item $(\lambda, \mu_1)_n \otimes (\lambda, \mu_2)_n \equiv (\lambda, \mu_1 \mu_2)_n$.
%\item $(\lambda, \mu)_n \cong (\mu, \lambda^{-1})_n \cong (\mu, \lambda)_n^{\text{op}}$. 
\item $(\lambda, \mu)_n \cong (\mu, \lambda^{-1})_n$. 
\item $(\lambda, \mu_1) \cong (\lambda, \mu_2)$ if and only if $\mu_1 \mu_2^{-1} \in \Nr(F(\sqrt[n]{\lambda})/F)$.
\end{enumerate}
\end{lemma}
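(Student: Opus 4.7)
The plan is to establish (1)--(5) by exploiting the identification of the symbol algebra $(\lambda,\mu;F;\zeta)_n$ with a cyclic algebra. Specifically, when $\lambda\notin(F^*)^n$, the assignment $\sqrt[n]{\lambda}\mapsto x$ embeds $K=F(\sqrt[n]{\lambda})$ as a maximal subfield, and the map $\sigma:\sqrt[n]{\lambda}\mapsto\zeta\sqrt[n]{\lambda}$ generates $\Gal(K/F)$; since $yx=\zeta xy$ rewrites as $y^{-1}xy=\zeta x$, conjugation by $y$ realizes $\sigma$, identifying $(\lambda,\mu;F;\zeta)_n$ with the cyclic algebra $(\mu,K/F,\sigma)$. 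The degenerate case $\lambda\in(F^*)^n$ reduces to (1).

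For (1), I would argue directly: the relations $y^n=1$ and $yx=\zeta xy$ make $y$ diagonalizable with orthogonal idempotents $e_i=\tfrac{1}{n}\sum_{j=0}^{n-1}\zeta^{-ij}y^j$ summing to $1$, and one checks $xe_i=e_{i+1}x$. Hence the $e_i$ and appropriate products with $x$ form a complete set of $n\times n$ matrix units over $F$, yielding $(\lambda,1)_n\cong M_n(F)$. Part (2) is the classical norm residue criterion for cyclic algebras, proved via the crossed product description: $\mu$ being a norm from $K$ provides the cochain trivializing the cocycle, and conversely Skolem--Noether applied to an isomorphism with $M_n(F)$ produces an element of $K^*$ with norm $\mu$. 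Part (3) follows from the standard multiplicativity in the Brauer group for cyclic algebras with a fixed maximal subfield $K/F$ and generator $\sigma$: the class of $(\mu,K/F,\sigma)$ in $\mathrm{Br}(F)$ is the image of $\mu$ under the connecting homomorphism $F^*/\Nr(K/F)\to \mathrm{Br}(K/F)$, which is a group homomorphism.

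For (4), I would give an explicit isomorphism: setting $a=y$ and $b=x^{-1}$ in $(\lambda,\mu;F;\zeta)_n$, one has $a^n=\mu$, $b^n=\lambda^{-1}$, and $ba=x^{-1}y=\zeta y x^{-1}=\zeta ab$ (using $yx=\zeta xy$). Thus $a,b$ generate a copy of $(\mu,\lambda^{-1};F;\zeta)_n$, and by dimension count this copy exhausts the algebra. Finally (5) is formal: $(\lambda,\mu_1)_n\cong(\lambda,\mu_2)_n$ iff they are Brauer equivalent (same $F$-dimension $n^2$ forces this by Wedderburn), iff $(\lambda,\mu_1)_n\otimes(\lambda,\mu_2)_n^{\mathrm{op}}\equiv 1$; by (4) applied to compute the opposite algebra one gets $(\lambda,\mu_2)_n^{\mathrm{op}}\cong(\lambda,\mu_2^{-1})_n$, and then (3) reduces the condition to $(\lambda,\mu_1\mu_2^{-1})_n\equiv 1$, which by (2) is $\mu_1\mu_2^{-1}\in\Nr(F(\sqrt[n]{\lambda})/F)$.

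The main obstacle is the converse direction of (2): showing that splitting of the cyclic algebra forces the element to be a norm. The short route is to invoke Hilbert 90 together with Skolem--Noether to convert a splitting into an element of $K^*$ whose norm is $\mu$; all other parts are essentially bookkeeping once (1) and the cyclic algebra identification are in place.
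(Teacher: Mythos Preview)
The paper does not prove this lemma at all; it is quoted directly from \cite[Proposition 24.48]{Row08} and \cite[Proposition VII.1.9]{BO13} without argument. Your sketch supplies a standard self-contained proof via the identification of $(\lambda,\mu;F;\zeta)_n$ with the cyclic algebra $(\mu,F(\sqrt[n]{\lambda})/F,\sigma)$, and the outline is correct.

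One small imprecision in your derivation of (5): item (4) alone does not give $(\lambda,\mu_2)_n^{\mathrm{op}}\cong(\lambda,\mu_2^{-1})_n$. You first need the (easy) observation that reversing multiplication swaps the roles of the two generators, so $(\lambda,\mu_2;F;\zeta)_n^{\mathrm{op}}\cong(\mu_2,\lambda;F;\zeta)_n$; only then does (4) yield $(\mu_2,\lambda)_n\cong(\lambda,\mu_2^{-1})_n$. Alternatively, you can bypass opposites entirely and argue directly from (3): since $(\lambda,\mu_1)_n$ and $(\lambda,\mu_2)_n$ have the same $F$-dimension, they are isomorphic iff Brauer equivalent, iff $(\lambda,\mu_1)_n\otimes(\lambda,\mu_2^{-1})_n\equiv(\lambda,\mu_1\mu_2^{-1})_n\equiv 1$, and then (2) finishes. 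Either route is fine.
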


The preceding lemma shows that to decide on isomorphisms of symbol algebras one has to understand the images of the norm map of field extensions. Though in general this is not an easy task, at least for some situations important to us we can achieve this.
The proof of the following lemma is essentially due to Danny Neftin.
\begin{lemma}\label{lem:Norm}
Let $p$ be an odd prime, $\zeta$ a primitive complex $p$-th root of unity and set $F = \Q(\zeta)$. Then there exists $\lambda \in \Q^*$ such that $\zeta \notin \Nr(F(\sqrt[p]{\lambda})/F)$. 
\end{lemma}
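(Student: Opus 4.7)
The plan is to construct $\lambda \in \mathbb{Q}^*$ explicitly as a rational prime chosen from a suitable arithmetic progression, and then to verify by local methods that $\zeta$ fails to be a norm. Since $F(\sqrt[p]{\lambda})/F$ is a cyclic extension of degree $p$ (provided $\lambda$ is not a $p$-th power in $F$), the Hasse norm theorem for cyclic extensions reduces the global non-norm assertion to exhibiting a single place of $F$ at which $\zeta$ is not a local norm.

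First I would choose a rational prime $q$ satisfying $q \equiv 1 \pmod{p}$ but $q \not\equiv 1 \pmod{p^2}$; such $q$ exist by Dirichlet's theorem on primes in arithmetic progressions. The congruence $q \equiv 1 \pmod{p}$ forces $q$ to split completely in $F = \mathbb{Q}(\zeta)$, so for any prime $\mathfrak{q}$ of $F$ above $q$ the completion satisfies $F_\mathfrak{q} \cong \mathbb{Q}_q$, the element $q$ is a uniformizer of $F_\mathfrak{q}$, and $\zeta$ lies in the units of $F_\mathfrak{q}$. In particular $q$ is not a $p$-th power in $F_\mathfrak{q}$, so the Kummer extension $F_\mathfrak{q}(\sqrt[p]{q})/F_\mathfrak{q}$ is totally ramified of degree $p$, and hence $[F(\sqrt[p]{q}):F] = p$ as well. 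I would then set $\lambda = q$.

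The decisive step is to compute the local $p$-th Hilbert symbol $(q,\zeta)_{p,\mathfrak{q}}$, which by its defining property vanishes exactly when $\zeta \in \Nr(F_\mathfrak{q}(\sqrt[p]{q})/F_\mathfrak{q})$. Since $q$ is a uniformizer and $\zeta$ a unit in $F_\mathfrak{q}$, the tame-symbol formula evaluates $(q,\zeta)_{p,\mathfrak{q}}$ to the image of $\overline{\zeta}^{-(q-1)/p}$ inside the group of $p$-th roots of unity in $\mathbb{F}_q^*$, where $\overline{\zeta}$ denotes the reduction of $\zeta$ modulo $\mathfrak{q}$. Because $\overline{\zeta}$ has order exactly $p$ in $\mathbb{F}_q^*$, this symbol is trivial if and only if $p^2 \mid q-1$, and our choice of $q$ explicitly forbids this; thus $(q,\zeta)_{p,\mathfrak{q}} \neq 1$. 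Consequently $\zeta$ is not a local norm at $\mathfrak{q}$, and by the Hasse norm theorem it cannot be a global norm from $F(\sqrt[p]{q})/F$.

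The main obstacle is the tame-symbol calculation together with the correct identification of when the extension $F(\sqrt[p]{q})/F$ is genuinely of degree $p$; both points are standard once the correct arithmetic progression for $q$ has been pinned down, but the whole strategy hinges on selecting the congruence condition $q \not\equiv 1 \pmod{p^2}$ so that $\zeta$ itself is the obstruction at $\mathfrak{q}$.
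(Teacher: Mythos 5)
Your argument is correct, and after the common first step of invoking the Hasse norm theorem to localize the question, it diverges genuinely from the paper's. The paper chooses a prime $q \not\equiv 1 \bmod p$, takes $\lambda$ to be a uniformizer at a place $\nu$ above $q$, and rules out $\zeta$ being a local norm indirectly: by a theorem of Albert, $\zeta \in \Nr(F(\sqrt[p]{\lambda})_\nu/F_\nu)$ if and only if $F(\sqrt[p]{\lambda})_\nu$ embeds into a cyclic degree-$p^2$ extension of $F_\nu$, and a result of Hanke--Sonn on totally ramified extensions excludes such an embedding. You instead choose $q \equiv 1 \bmod p$ with $q \not\equiv 1 \bmod p^2$, so that $q$ splits completely and $F_{\mathfrak q} \cong \Q_q$, set $\lambda = q$, and decide the local norm question directly via the tame Hilbert symbol $(q,\zeta)_{p,\mathfrak q} = \overline{\zeta}^{-(q-1)/p}$, which is nontrivial precisely because $p^2 \nmid q-1$. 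Your route is more elementary and self-contained (no appeal to Albert's cyclicity criterion or to Hanke--Sonn) and it produces a completely explicit $\lambda$; the only points to keep straight --- that $\overline{\zeta}$ has exact order $p$ in $\F_q^*$ because reduction is injective on roots of unity of order prime to $q$, and that $[F(\sqrt[p]{q}):F]=p$, which you verify locally --- are both in order, and the various sign conventions for the tame symbol do not affect the triviality criterion $p^2 \mid q-1$.
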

\begin{proof}
For a fixed $\lambda$ set $z = \sqrt[p]{\lambda}$. 
Let $q$ be a prime such that $q \not\equiv 1 \bmod p$, which exists by Dirichlet's theorem. Denote by $\nu$ the $q$-adic evaluation of $F$ and by $F_\nu$ the completion of $F$ with respect to $\nu$. Note that by our choice of $q$ the field $F_\nu$ does not contain a root of unity of order $p^2$. By the Hasse Norm Theorem \cite[Section 18.4]{Pierce} to prove that $\zeta \notin \Nr(F(z)/F)$ it is sufficient to show that $\zeta \notin \Nr(F(z)_\nu/F_\nu)$. By a theorem of Albert \cite[Chapter 9, Theorem 11]{Albert37} this is equivalent to the condition that there exists a field $K$ such that $F_\nu \subseteq F(z)_\nu \subseteq K$ and $K/F_\nu$ is a cyclic extension of degree $p^2$. 

Assume now that $\lambda$ is a uniformizing element of $F_\nu$. Then the extension $F_\nu(z)/F_\nu$ is totally ramified, i.e. the ramification index equals $p$. Because there are exactly $p$ roots of unity in $F_\nu$ with order a power of $p$, it then follows from \cite[Proposition 4.1]{HankeSonn} that there is no field $K$ such that $F_\nu \subseteq F(z)_\nu \subseteq K$ and $K/F_\nu$ is a cyclic extension of degree $p^2$.
\end{proof}

\subsection{Partial isomorphisms of second cohomology groups}
We record here a fact which restricts the possible isomorphisms of the second cohomology groups which can come up in the (TGRIP). This fact will be particularly useful to us later when $H^2(G,F^*)$ will be infinitely generated. First recall the following result on commutative components of twisted group algebras \cite[Proposition 2.5]{MargolisSchnabel2}.

\begin{proposition}\label{prop:SalatimTheorem}
Let $F$ be a field such that ${\operatorname{char}}(F)$ does not divide the order of $G$ and let $[\alpha] \in H^2(G,F^*)$. Then $F^\alpha G$ admits a commutative Wedderburn component if and only if $[\alpha]$ is in the image of the inflation map from $\Ext(G/G',F^*)$ to $H^2(G,F^*)$.
\end{proposition}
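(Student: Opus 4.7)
The plan is to prove both implications by constructing or analysing one-dimensional $\alpha$-representations of $G$. For the backward direction, suppose $[\alpha] = \operatorname{inf}[\beta]$ with $[\beta] \in \Ext(G/G', F^*)$. Choose a symmetric representative of $[\beta]$; since $F^\alpha G$ depends only on the cohomology class of $\alpha$, one may replace $\alpha$ by $\operatorname{inf}\beta$. The map $u_g \mapsto v_{gG'}$ then extends to a surjective $F$-algebra homomorphism $F^\alpha G \twoheadrightarrow F^\beta(G/G')$. Because $\beta$ is symmetric and $G/G'$ is abelian, the target is commutative; by Maschke's theorem (using $\operatorname{char}(F) \nmid |G|$) it is semisimple, hence a direct product of field extensions of $F$. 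These fields appear as Wedderburn components of $F^\alpha G$.

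For the forward direction, let $K$ be a commutative Wedderburn component of $F^\alpha G$ and let $\pi: F^\alpha G \twoheadrightarrow K$ be the projection. Define $\rho: G \to K^*$ by $\rho(g) = \pi(u_g)$, so that $\rho(g)\rho(h) = \alpha(g,h)\rho(gh)$. Since $\alpha$ takes values in $F^*$, the induced map $\tilde\rho: G \to K^*/F^*$ is a group homomorphism, and as $K^*/F^*$ is abelian, $\tilde\rho$ factors through $G/G'$. Pick a set-theoretic section $s: G/G' \to G$ and set $\phi(a) := \rho(s(a))$. Then
\[
\beta(a,b) := \phi(a)\phi(b)\phi(ab)^{-1}
\]
defines a cocycle (being a coboundary over $K^*$), whose values lie in $F^*$ (since $\phi$ lifts $\tilde\rho$) and which is symmetric (since $K^*$ is abelian and $G/G'$ is abelian). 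Hence $[\beta] \in \Ext(G/G', F^*)$.

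It remains to identify $[\alpha]$ with $\operatorname{inf}[\beta]$ in $H^2(G, F^*)$. Define $\chi: G \to F^*$ by $\chi(g) := \rho(g)\phi(gG')^{-1}$; this indeed lands in $F^*$ because $\rho(g)$ and $\phi(gG') = \rho(s(gG'))$ both project to $\tilde\rho(gG')$ in $K^*/F^*$ and hence differ by an element of $F^*$. A direct computation using the defining relations of $\rho$ and $\phi$ then yields $\alpha = (\operatorname{inf}\beta)\cdot\partial\chi$ as cocycles in $Z^2(G, F^*)$, as required.

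The subtle step is the forward direction, and specifically the verification that $\alpha$ and $\operatorname{inf}\beta$ are cohomologous over $F^*$ rather than merely over the possibly larger field $K$ where the construction of $\beta$ takes place. The mechanism making this work is that reduction modulo $F^*$ forces $\tilde\rho$ to factor through $G/G'$ and produces the $F^*$-valued coboundary $\chi$, linking the $K^*$-valued data back to the actual cohomology group $H^2(G, F^*)$.
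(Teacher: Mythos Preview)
Your proof is correct. The paper does not actually prove this proposition; it is quoted from the authors' earlier work \cite[Proposition~2.5]{MargolisSchnabel2} and used as a black box. Your argument supplies a clean, self-contained proof: the backward direction exhibits the surjection $F^\alpha G \twoheadrightarrow F^\beta(G/G')$ onto a commutative semisimple algebra, and the forward direction recovers a symmetric cocycle $\beta$ on $G/G'$ from a one-dimensional projective representation $\rho:G\to K^*$, using the quotient $K^*/F^*$ to force the construction down to $F^*$-values. The only point worth making explicit is that each $\rho(g)=\pi(u_g)$ is automatically a unit in $K$ because $u_g$ is a unit in $F^\alpha G$; with that noted, the verification $\alpha=(\operatorname{inf}\beta)\cdot\partial\chi$ goes through exactly as you describe.
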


\begin{proposition}\label{prop:IsosFromSalatim}
Let $G$ and $H$ be finite groups such that $G \sim_F H$. Assume moreover that the characteristic of $F$ does not divide the order of $G$. \\
Then $\Ext(G/G',F^*) \cong \Ext(H/H',F^*)$ and $\Hom(M(G),F^*) \cong \Hom(M(H),F^*)$.
\end{proposition}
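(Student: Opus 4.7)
The plan is to exploit two ingredients already assembled in the preliminaries: the split short exact sequence of Theorem~\ref{th:UCT}, together with the ring-theoretic characterization of the image of inflation provided by Proposition~\ref{prop:SalatimTheorem}. Let $\psi: H^2(G,F^*) \to H^2(H,F^*)$ be the group isomorphism supplied by $G\sim_F H$. Applying this to the trivial cohomology class yields $FG\cong FH$, which forces $|G|=|H|$, so the hypothesis $\operatorname{char}(F)\nmid |G|$ also ensures $\operatorname{char}(F)\nmid |H|$ and Proposition~\ref{prop:SalatimTheorem} applies to both groups.

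The first step is to show that $\psi$ respects the inflation subgroup. For any $[\alpha]\in H^2(G,F^*)$, the definition of $\sim_F$ gives $F^\alpha G\cong F^{\psi(\alpha)}H$, and the existence of a commutative Wedderburn component is evidently preserved by an $F$-algebra isomorphism. Hence, by Proposition~\ref{prop:SalatimTheorem},
\[ [\alpha]\in\operatorname{im}(\operatorname{inf}_G) \ \Longleftrightarrow\ F^\alpha G \text{ has a commutative Wedderburn component} \ \Longleftrightarrow\ \psi(\alpha)\in\operatorname{im}(\operatorname{inf}_H). \]
Consequently $\psi$ restricts to a group isomorphism between $\operatorname{im}(\operatorname{inf}_G)$ and $\operatorname{im}(\operatorname{inf}_H)$. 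Since the short exact sequence of Theorem~\ref{th:UCT} is exact at the left, the inflation maps are injective, so $\operatorname{im}(\operatorname{inf}_G)\cong \operatorname{Ext}(G/G',F^*)$ and likewise for $H$, proving the first claimed isomorphism.

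For the second isomorphism, because $\psi$ sends the subgroup $\operatorname{im}(\operatorname{inf}_G)$ bijectively onto $\operatorname{im}(\operatorname{inf}_H)$, it induces an isomorphism of the quotients
\[ H^2(G,F^*)/\operatorname{im}(\operatorname{inf}_G)\ \cong\ H^2(H,F^*)/\operatorname{im}(\operatorname{inf}_H), \]
and by exactness of the sequence in Theorem~\ref{th:UCT} these quotients are $\operatorname{Hom}(M(G),F^*)$ and $\operatorname{Hom}(M(H),F^*)$ respectively. There is no serious obstacle here: the only point worth verifying is that commutativity of a Wedderburn summand really is an invariant of the $F$-algebra structure (which it is, as the Wedderburn decomposition of a semisimple algebra is intrinsic), and that the characteristic condition transfers from $G$ to $H$, which we already handled.
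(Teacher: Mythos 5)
Your argument is correct and follows essentially the same route as the paper: both use Proposition~\ref{prop:SalatimTheorem} to show that the isomorphism realizing $G\sim_F H$ preserves the image of the inflation map, and then read off the two isomorphisms from the exact sequence of Theorem~\ref{th:UCT}. Your extra checks (that the characteristic hypothesis transfers to $H$, and passing to quotients rather than invoking the splitting) are harmless minor variations of the same proof.
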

\begin{proof}
Let $\varphi: H^2(G, F^*) \rightarrow H^2(H, F^*)$ be an isomorphism realizing the relation $G \sim_F H$.
Recall that 
\begin{equation}\label{eq:UCT}
H^2(G, F^*) \cong \Ext(G/G', F^*) \times \Hom(M(G), F^*)
\end{equation}
and $\Ext(G/G', F^*)$ can be identified with a subgroup of $H^2(G, F^*)$ via the inflation map. For $[\alpha] \in H^2(G, F^*)$ we know that $F^\alpha G$ admits a commutative simple Wedderburn component if and only if $[\alpha] \in \Ext(G/G', F^*)$ by Proposition~\ref{prop:SalatimTheorem}. So 
\begin{align*}
&[\alpha] \in\Ext(G/G', F^*) \Leftrightarrow F^\alpha G \ \ \text{admits a commutative component} \\
& \Leftrightarrow F^{\varphi(\alpha)} H \ \ \text{admits a commutative component} \Leftrightarrow \varphi([\alpha]) \in \Ext(H/H', F^*).
\end{align*}
This implies that $\varphi$ restricts to an isomorphism between the groups $\Ext(G/G', F^*)$ and $\Ext(H/H', F^*)$. In view of \eqref{eq:UCT} it then also induces an isomorphism between $\Hom(M(G), F^*)$ and $\Hom(M(H), F^*)$.
\end{proof}

\subsection{Other useful results to determine twisted group algebras}
For the convenience of the reader we include several more known results which we will use later to determine isomorphism types of twisted group algebras.
\begin{proposition}\label{prop:partialSchurcover}\cite[Proposition 3.3.8]{KarpilovskyProjective}
	Let $\alpha \in Z^2(G,\C ^*)$ be of order $n$ and let $G_{\alpha}=\langle \alpha(x,y) g \ | \ x,y,g\in G \rangle$. Then
	$$\C G_{\alpha}\cong \bigoplus_{i=0}^{n-1}\C ^{\alpha^i}G.$$
\end{proposition}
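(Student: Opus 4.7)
The plan is to construct $\C G_\alpha$ explicitly as a central extension of $G$ by a cyclic group and then decompose it by means of central idempotents.

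First I would invoke Brauer's Trick (Proposition~\ref{BrauerTrick}) to replace $\alpha$ by a cohomologous cocycle whose values are all $n$-th roots of unity, where $n = \circ([\alpha])$. Since the isomorphism types of the algebras $\C^{\alpha^i}G$ depend only on the cohomology classes, this change is harmless. With this normalization, the set $\{\alpha(x,y)u_g : x,y,g \in G\}$ inside the unit group of $\C^\alpha G$ generates a genuine subgroup $G_\alpha$, and one checks that $G_\alpha$ sits in a central extension
\[
1 \longrightarrow \langle \zeta \rangle \longrightarrow G_\alpha \longrightarrow G \longrightarrow 1,
\]
where $\zeta$ is a primitive $n$-th root of unity identified with an element $z \in G_\alpha$. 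Picking a normalized set-theoretic section $s : G \to G_\alpha$, the multiplication reads $s(g)s(h) = z^{c(g,h)} s(gh)$, where $c(g,h) \in \Z/n\Z$ is defined by $\alpha(g,h) = \zeta^{c(g,h)}$.

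Next I would introduce the orthogonal central idempotents in $\C G_\alpha$ coming from the central cyclic subgroup $\langle z \rangle$: for $i = 0,\ldots,n-1$ set
\[
e_i = \frac{1}{n} \sum_{j=0}^{n-1} \zeta^{-ij} z^j.
\]
These are orthogonal, sum to $1$, and satisfy $z \cdot e_i = \zeta^i e_i$. Consequently
\[
\C G_\alpha = \bigoplus_{i=0}^{n-1} e_i \C G_\alpha,
\]
and each summand has $\C$-basis $\{e_i s(g) : g \in G\}$.

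The final step is to identify $e_i \C G_\alpha$ with $\C^{\alpha^i}G$. Using $z \cdot e_i = \zeta^i e_i$, the multiplication in $e_i \C G_\alpha$ becomes
\[
(e_i s(g)) \cdot (e_i s(h)) = e_i z^{c(g,h)} s(gh) = \zeta^{i\, c(g,h)}\, e_i s(gh) = \alpha(g,h)^i \, e_i s(gh),
\]
which is exactly the multiplication rule in $\C^{\alpha^i}G$ with basis $\{u_g\}$. Hence the $\C$-linear map $u_g \mapsto e_i s(g)$ is an algebra isomorphism $\C^{\alpha^i}G \cong e_i \C G_\alpha$, and combining with the decomposition above gives the desired isomorphism. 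The only delicate point is making sure of the Brauer Trick reduction at the start so that $G_\alpha$ really is a finite group and the idempotents $e_i$ make sense; once this is in place the computation is purely formal.
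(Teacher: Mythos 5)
Your proof is correct and is essentially the standard argument for this result, which the paper does not prove itself but cites from Karpilovsky; the decomposition via the orthogonal central idempotents $e_i = \frac{1}{n}\sum_j \zeta^{-ij}z^j$ attached to the central cyclic subgroup $\langle z\rangle$, together with the eigenvalue computation $(e_is(g))(e_is(h)) = \alpha(g,h)^i e_is(gh)$, is exactly how the cited proof goes. One small remark: the hypothesis ``$\alpha$ of order $n$'' refers to the cocycle itself, so its values are automatically $n$-th roots of unity and the appeal to Brauer's Trick is unnecessary; indeed, replacing $\alpha$ by a cohomologous cocycle would change the group $G_\alpha$ appearing in the statement, so it is cleaner to skip that normalization (it is only needed when one starts from a cohomology class, as the paper does in its applications).
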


\begin{lemma}\label{lem:PassmanMatrices} \cite[Lemma 6.1.6]{Pas77} In a ring $R$ let $1 = e_1+\ldots+e_n$ be a decomposition into pairwise orthogonal idempotents. Assume $R^*$ permutes the set $\{e_1,\ldots,e_n \}$ transitively by conjugation. Then $R \cong M_n(e_1Re_1)$.
\end{lemma}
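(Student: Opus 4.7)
The plan is to produce an explicit system of $n \times n$ matrix units inside $R$ whose diagonal elements are exactly the $e_i$'s; the desired isomorphism $R \cong M_n(e_1 R e_1)$ then follows from the standard Peirce-decomposition argument.

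By hypothesis, for each $i \in \{1,\ldots,n\}$ there exists a unit $u_i \in R^*$ with $u_i^{-1} e_1 u_i = e_i$; we may take $u_1 = 1$. First I would define
\[
a_{ij} := u_i^{-1} e_1 u_j \qquad (1 \le i,j \le n).
\]
A direct computation, using only that $u_j$ is invertible and that $e_1$ is idempotent, gives
\[
a_{ij} a_{kl} \;=\; u_i^{-1} e_1 u_j u_k^{-1} e_1 u_l.
\]
For $j = k$ the middle factor $u_j u_j^{-1}$ is $1$, so $a_{ij} a_{jl} = u_i^{-1} e_1^2 u_l = a_{il}$. For $j \neq k$ one observes $a_{ij} a_{kl} = (u_i^{-1} e_1)(u_j \cdot u_k^{-1} e_1 u_l) = (a_{ii})(u_i^{-1} \cdot u_j \cdot a_{kk} \cdot u_k^{-1} u_l) $; the cleanest route is to note that $a_{ij} a_{kl} = a_{ij} (a_{jj} a_{kl}) \cdot a_{jj}^{-1}\cdot\ldots$—instead I would rather verify it in a single step by establishing that $e_j a_{kl} = 0$ when $j \neq k$, using $a_{kl} = e_k a_{kl}$ and $e_j e_k = 0$. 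In any case, the outcome is the matrix-unit identities
\[
a_{ij} a_{kl} \;=\; \delta_{jk}\, a_{il}, \qquad a_{ii} = e_i, \qquad \sum_{i=1}^n a_{ii} = 1.
\]

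Next I would invoke the standard fact (a direct Peirce-decomposition argument) that whenever a ring $R$ contains elements $a_{ij}$ satisfying the three properties displayed above, the map
\[
\Phi : M_n(e_1 R e_1) \longrightarrow R, \qquad (r_{ij})_{ij} \longmapsto \sum_{i,j=1}^n a_{i1}\, r_{ij}\, a_{1j},
\]
is a ring isomorphism with inverse given by $\Phi^{-1}(r)_{ij} = a_{1i}\, r\, a_{j1}$. Checking this is routine: surjectivity follows from $\sum_{i,j} e_i R e_j = R$ together with the observation that $e_i R e_j = a_{i1}(e_1 R e_1) a_{1j}$ (because $a_{i1} a_{1j} = a_{ij}$ spans the $(i,j)$-Peirce component over $e_1 R e_1$ in the sense that every $x \in e_i R e_j$ equals $a_{i1}(a_{1i} x a_{j1}) a_{1j}$), and injectivity follows because the $a_{i1}(e_1 R e_1) a_{1j}$ form a direct sum of abelian groups whose total is $R$.

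The main (and essentially only) obstacle is verifying the matrix-unit identities; once those are in hand everything reduces to the classical Peirce-decomposition isomorphism, so the proof is short. The role of the hypothesis that the $u_i$ are actual units (not merely elements satisfying $u_i^{-1} e_1 u_i = e_i$) enters only to allow the cancellation $u_j u_j^{-1} = 1$ in the computation of $a_{ij} a_{jl}$, which is what makes the chosen $a_{ij}$ into genuine matrix units.
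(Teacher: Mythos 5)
Your proof is correct; the paper itself gives no proof of this lemma (it is quoted from Passman, \cite[Lemma 6.1.6]{Pas77}), and your argument — building matrix units $a_{ij}=u_i^{-1}e_1u_j$ from the conjugating units and then applying the standard Peirce-decomposition isomorphism — is exactly the classical one used there. The key verifications ($a_{ij}a_{jl}=a_{il}$ via $u_ju_j^{-1}=1$, and $a_{ij}a_{kl}=0$ for $j\neq k$ via $a_{ij}=a_{ij}e_j$, $a_{kl}=e_ka_{kl}$, $e_je_k=0$) all check out.
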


The following two results are due to It\^{o}.

\begin{theorem}\cite[(12.34) Corollary]{Isaacs}\label{th:Isaacs1234}
Let $G$ be solvable. Then $G$ has a normal abelian Sylow $p$-subgroup if and only if $p$ does not divide the degree of any complex irreducible character of $G$. 
\end{theorem}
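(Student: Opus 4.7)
The statement is a classical result of It\^o, and the two implications are of rather different character. For the forward direction I would apply It\^o's earlier theorem that $\chi(1)$ divides $[G:A]$ whenever $A$ is an abelian normal subgroup of $G$ and $\chi \in \Irr(G)$. Taking $A = P$ a normal abelian Sylow $p$-subgroup, the index $[G:P]$ is coprime to $p$, so $p$ divides no character degree. This half does not require solvability.

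For the converse the plan is induction on $|G|$. By solvability, pick a minimal normal subgroup $N$, necessarily elementary abelian of some prime order $q$. Characters of $G/N$ inflate to characters of $G$ of the same degree, so the hypothesis on character degrees descends to $G/N$, and induction yields a normal abelian Sylow $p$-subgroup $\bar{P} = P/N$ of $G/N$. The goal is then to promote this to a normal abelian Sylow $p$-subgroup of $G$ itself.

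When $q = p$ the preimage $P$ is already a normal Sylow $p$-subgroup of $G$. When $q \neq p$, Schur--Zassenhaus gives $P = N \rtimes Q$ for a Sylow $p$-subgroup $Q$ of $G$, and one has to prove $Q \lhd G$, which reduces to showing that $Q$ centralises $N$. I would argue this by contradiction using Clifford theory: if $Q$ does not centralise $N$, pick $\lambda \in \Irr(N)$ with non-trivial $Q$-orbit; the inertia subgroup $T$ of $\lambda$ in $G$ contains $N$ and satisfies $[G:T] \geq [Q:T\cap Q]$, a non-trivial power of $p$, so every $\chi \in \Irr(G)$ lying over $\lambda$ has degree $[G:T]\xi(1)$ divisible by $p$, contradicting the hypothesis. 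Once $P \lhd G$ is established, abelianness is clean: every $\psi \in \Irr(P)$ appears as a constituent of $\chi|_P$ for some $\chi \in \Irr(G)$, and Clifford's theorem forces $\psi(1) \mid \chi(1)$, hence coprime to $p$; since $P$ is a $p$-group this forces every $\psi(1) = 1$, so $P$ is abelian.

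The main obstacle is the Clifford-theoretic step in the $q \neq p$ case producing the forbidden character degree. The delicate point is verifying that $[G:T]$ is actually divisible by $p$ whenever the $Q$-orbit of $\lambda$ is non-trivial; this follows because the $G$-orbit of $\lambda$ is a union of $Q$-orbits and contains the $Q$-orbit of $\lambda$, so its size is a multiple of $[Q : T\cap Q]$, a non-trivial power of $p$. I expect this to be the technically subtle step, while all other ingredients (Schur--Zassenhaus, Frattini, inflation of characters, Clifford correspondence) are standard tools.
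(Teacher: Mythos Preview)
The paper does not supply its own proof of this statement; it is simply quoted from Isaacs as a tool. So there is nothing in the paper to compare against, and your outline stands or falls on its own merits.

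Your proposal is essentially correct and is a standard elementary route to this theorem. The forward direction via It\^o's divisibility theorem is fine. In the inductive converse, the two cases $q=p$ and $q\neq p$ are handled correctly in spirit, and the final abelianness argument via Clifford (forcing $\psi(1)\mid\chi(1)$ for $\psi\in\Irr(P)$) is clean.

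One point deserves tightening. In the $q\neq p$ case you argue that $[G:T]$ is divisible by $p$ because ``the $G$-orbit of $\lambda$ is a union of $Q$-orbits \ldots\ so its size is a multiple of $[Q:T\cap Q]$''. As written this is not quite a proof: the $G$-orbit is indeed a union of $Q$-orbits, but those $Q$-orbits need not all have the same size, so being a union does not by itself give divisibility. The clean argument is purely group-theoretic: $QT/T\cong Q/(Q\cap T)$, hence
\[
[G:T]=[G:QT]\cdot[QT:T]=[G:QT]\cdot[Q:Q\cap T],
\]
and $[Q:Q\cap T]$ is a non-trivial power of $p$ since the $Q$-orbit of $\lambda$ is non-trivial. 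With this adjustment the step goes through, and then $Q$ centralises $N$, so $P=N\times Q$, $Q$ is characteristic in $P\lhd G$, and $Q\cong P/N$ is already abelian by induction.
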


\begin{theorem}\cite[(6.15) Theorem]{Isaacs}\label{th:Isaacs615}
Let $A$ be a normal abelian subgroup of $G$. Then the degree of any irreducible complex character of $G$ divides $[G:A]$.
\end{theorem}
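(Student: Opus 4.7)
The plan is to proceed by induction on $|G|$, using Clifford theory to reduce to the case where the constituent of $\chi_A$ is $G$-invariant, and then translating the $G$-invariant case to a question about projective representations of $G/A$.

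Fix $\chi \in \Irr(G)$ and choose an irreducible constituent $\lambda$ of $\chi_A$; since $A$ is abelian, $\lambda$ is a linear character. Let $T = I_G(\lambda)$ be its inertia subgroup in $G$. By the Clifford correspondence there is a unique $\psi \in \Irr(T)$ lying above $\lambda$ with $\chi = \psi^G$, so $\chi(1) = [G:T]\,\psi(1)$. If $T$ is a proper subgroup of $G$, the induction hypothesis applied to the smaller pair $(T, A)$ yields $\psi(1) \mid [T:A]$, and hence $\chi(1) \mid [G:T][T:A] = [G:A]$.

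The substantive case is $T = G$, so that $\lambda$ is $G$-invariant and $\chi_A = \chi(1)\lambda$. The representation $\rho$ affording $\chi$ then sends every $a \in A$ to the scalar matrix $\lambda(a)\,\mathrm{Id}_{\chi(1)}$, and therefore descends to an irreducible $\alpha$-projective representation of $G/A$ of degree $\chi(1)$, where the cocycle $\alpha \in Z^2(G/A, \C^*)$ is determined by $\lambda$. Equivalently, $\chi$ corresponds to a simple $\C^\alpha(G/A)$-module of $\C$-dimension $\chi(1)$. The theorem thus reduces to the general claim that for any finite group $\Gamma$ and any $[\alpha] \in H^2(\Gamma, \C^*)$, every simple $\C^\alpha\Gamma$-module has $\C$-dimension dividing $|\Gamma|$.

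The main obstacle is this projective-dimension claim, precisely because $|G/A|$ need not be smaller than $|G|$, so the induction on $|G|$ does not directly continue. To handle it I would first apply the Brauer trick (Proposition~\ref{BrauerTrick}) to replace $\alpha$ by a cocycle with values in the $n$-th roots of unity, $n = \mathrm{ord}([\alpha])$, and then pass to the partial Schur cover $\Gamma_\alpha$ of Proposition~\ref{prop:partialSchurcover}. This is a central extension of $\Gamma$ by a cyclic group $Z$ of order $n$ with $\C\Gamma_\alpha \cong \bigoplus_{i=0}^{n-1}\C^{\alpha^i}\Gamma$, so the given simple $\C^\alpha\Gamma$-module lifts to an ordinary irreducible character of $\Gamma_\alpha$ of the same degree on which $Z$ acts by a faithful linear character. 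A secondary induction on $n = \mathrm{ord}([\alpha])$, whose base case $n = 1$ is the ordinary-character situation already handled by the Clifford reduction above, then closes the argument: the faithful central action of $Z$ forces the lifted character degree to divide $[\Gamma_\alpha : Z] = |\Gamma| = [G:A]$, as desired.
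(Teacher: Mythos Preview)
The paper does not prove this result; it only cites it from Isaacs. In Isaacs' proof the Clifford reduction you carry out is identical, but the $T=G$ case is finished differently: one mods out $\ker\lambda$ to make $\lambda$ faithful, so that $A$ becomes cyclic; $G$-invariance of a faithful linear character then forces $A\subseteq Z(G)$, and one invokes the earlier fact (Isaacs' Theorem~3.12, proved via central characters and algebraic integers) that $\chi(1)$ divides the index of any central subgroup.

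Your translation of the $T=G$ case to a projective representation of $\Gamma=G/A$ is correct, and the target statement that every simple $\C^\alpha\Gamma$-module has dimension dividing $|\Gamma|$ is true. The gap is in your proposed proof of that statement: the ``secondary induction on $n$'' does not terminate. After lifting to $\Gamma_\alpha$ you are again in the $G$-invariant case of the theorem, now for the pair $(\Gamma_\alpha,Z)$ with $Z$ central cyclic acting by a faithful linear character; applying your own recipe to this pair produces a projective representation of $\Gamma_\alpha/Z=\Gamma$ whose cocycle is cohomologous to the original $\alpha$, so $n$ does not decrease. If instead you try to absorb this step into the primary induction on $|G|$, note that $|\Gamma_\alpha|=n\,|G/A|$ and $n$ can equal $|A|$ (e.g.\ take $G$ extraspecial with $A=Z(G)$), giving $|\Gamma_\alpha|=|G|$. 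What is genuinely missing is a non-inductive input: the algebraic-integer argument showing $\chi(1)\mid[G:Z]$ for $Z$ central, which is precisely what Isaacs supplies and which Clifford-type reductions alone cannot replace.
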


\section{Powering of elements in the Schur multiplier}\label{Section3}
The following theorem will be an important tool in the proof of Theorem 4.
The proof presented here is due to Y. Ginosar. Recall that for $[\alpha] \in H^2(G,\C^*)$ we denote by $c_{G,\alpha}$ the greatest common divisor of the dimensions of the irreducible $\alpha$-representations.

\begin{theorem}\label{th:isomiffcoprime}
Let $G$ be a finite group, let $[\alpha] \in M(G)$ be of order $n$ and let $r$ be a natural number.
If $r$ is coprime with $n$, then $\mathbb{C}^\alpha G \cong \C^{\alpha^r} G$ and if $r$ is not coprime with $n$, then $c_{G,\alpha} > c_{G,\alpha^r}$.
In particular, $\C ^{\alpha}G\cong \C ^{\alpha^r}G$ if and only if $\gcd(n,r) = 1$.
\end{theorem}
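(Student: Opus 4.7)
I would split the biconditional into its two implications. The forward direction $\gcd(r,n)=1\Rightarrow \C^\alpha G\cong \C^{\alpha^r}G$ is proved by Galois symmetry, while the converse is proved in the stronger quantitative form $\gcd(r,n)>1\Rightarrow c_{G,\alpha}>c_{G,\alpha^r}$; the final biconditional then follows since isomorphic semisimple $\C$-algebras share the same multiset of Wedderburn dimensions and hence the same GCD $c$.

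For the forward direction, apply Brauer's trick (Proposition~\ref{BrauerTrick}) to assume $\alpha$ takes values in $n$-th roots of unity, so that $G_\alpha$ is a central extension of $G$ by $Z=\langle\zeta_n\rangle$ and Proposition~\ref{prop:partialSchurcover} yields $\C G_\alpha\cong\bigoplus_{j=0}^{n-1}\C^{\alpha^j}G$, the $j$-th summand collecting the irreducible characters $\chi$ of $G_\alpha$ with $\chi(\zeta_n)=\chi(1)\zeta_n^j$. Lifting the automorphism $\zeta_n\mapsto\zeta_n^r$ to $\Gal(\Q(\zeta_m)/\Q)$ for $m=\exp G_\alpha$ and letting it act on $\Irr(G_\alpha)$ gives a dimension-preserving bijection sending central parameter $j$ to $jr\bmod n$; specialising to $j=1$ matches the multisets of dimensions of the irreducible $\alpha$- and $\alpha^r$-representations of $G$, yielding $\C^\alpha G\cong\C^{\alpha^r}G$.

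For the converse I would work prime by prime. By Higgs' formula (Proposition~\ref{prop:Higgs88}(3)), $(c_{G,\alpha})_q=c_{G_q,\alpha|_{G_q}}$ for every prime $q$, with $\alpha|_{G_q}$ of order exactly $q^{v_q(n)}$ via the standard corestriction-restriction argument. For primes $q\nmid r$, the forward direction applied inside $G_q$ gives $(c_{G,\alpha})_q=(c_{G,\alpha^r})_q$; for primes $q\mid\gcd(r,n)$ the key lemma below yields a strict inequality $(c_{G,\alpha})_q>(c_{G,\alpha^r})_q$; for primes $q\mid r$ with $q\nmid n$ both $q$-parts are trivially $1$. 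Since $\gcd(r,n)>1$ forces at least one strict drop, $c_{G,\alpha^r}<c_{G,\alpha}$. The key lemma reads: for a $p$-group $P$, cocycle $\beta$ of order $p^a\geq p$, and $p\mid r$, one has $c_{P,\beta}>c_{P,\beta^r}$. Since $P_\beta$ is a $p$-group, all irreducible dimensions are $p$-powers so the GCD equals the minimum; write $c_{P,\beta}=p^b$, and use the forward direction together with iteration on $v_p(r)$ to reduce to the case $r=p$. Let $\eta$ be an irreducible $\beta$-representation of minimal dimension $p^b$ and form $\Lambda^p\eta$: the central generator $\zeta$ acts on $V_\eta$ as the scalar $\zeta$ and hence on $\Lambda^p V_\eta$ as $\zeta^p$, so $\Lambda^p\eta$ is a $\beta^p$-representation of $P$ of dimension $\binom{p^b}{p}$. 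A direct application of Legendre's formula gives $v_p\!\left(\binom{p^b}{p}\right)=b-1$, so in the decomposition of $\Lambda^p\eta$ into irreducible $\beta^p$-constituents (each of $p$-power dimension) the minimum dimension is at most $p^{b-1}<p^b$, giving $c_{P,\beta^p}\leq p^{b-1}<c_{P,\beta}$.

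The expected main obstacle is the key lemma, and its conceptual heart is the numerical identity $v_p\!\left(\binom{p^b}{p}\right)=b-1$: a larger valuation would make the exterior power too coarse to witness a strictly smaller $\beta^p$-constituent, and would force a more delicate construction (for instance via Mackey-type analysis of characters induced from proper subgroups of $P_\beta$). With this single piece of $p$-adic arithmetic in place, $\Lambda^p\eta$ supplies a uniform and conceptually clean witness to the drop of $c$ whenever $\beta$ is raised to a power divisible by $p$, after which the theorem follows by the prime-by-prime bookkeeping above.
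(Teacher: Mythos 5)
Your proof is correct and its heart --- the reduction to Sylow $p$-subgroups via Higgs' proposition, followed by the $p$-th exterior power $\Lambda^p\eta$ of a minimal-dimensional irreducible $\alpha$-representation, where $v_p\left(\binom{p^b}{p}\right)=b-1$ forces a strictly smaller $\beta^p$-constituent --- is exactly the paper's argument, and you are in fact slightly more explicit than the paper about that valuation computation and about the iteration on $v_p(r)$ needed to reduce to $r=p$. The only divergence is cosmetic and concerns the easy direction: the paper extends $\zeta\mapsto\zeta^r$ to a field automorphism of $\C$ and writes down the semilinear ring isomorphism $\sum a_g u_g\mapsto\sum\psi(a_g)v_g$ directly, whereas you reach the same conclusion by Galois-twisting $\Irr(G_\alpha)$ inside $\Q(\zeta_m)$ and matching the multisets of Wedderburn dimensions; both are valid over $\C$.
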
 
\begin{proof}
Let $\zeta$ be a primitive $n$-th root of unity in $\C$.
Assume first $\gcd(n,r)=1$. Using Proposition~\ref{BrauerTrick} we
may assume that the values of $\alpha$ in $\C ^*$ are all powers of $\zeta$.  
Since $\gcd(n,r)=1$, the following is an isomorphism
$$
\begin{array}{rl}
\psi :\mathbb{Q}(\zeta) & \rightarrow \mathbb{Q}(\zeta)\\
\zeta & \mapsto \zeta ^r.
\end{array}
$$
Let
$$\C ^{\alpha}G=\text{span}_{\C}\{u_g\}_{g\in G} \text{ and } \C ^{\alpha ^r}G=\text{span}_{\C}\{v_g\}_{g\in G}.$$
Then $\psi$ can be extended to an automorphism of $ \C$ which can
be extended as follows
$$
\begin{array}{rl}
\tilde{\psi} :\C ^{\alpha}G & \rightarrow \C ^{\alpha ^r}G\\
\sum _{g\in G} a_gu_g & \mapsto \sum _{g\in G} \psi(a_g)v_g.
\end{array}
$$
In order to show that $\tilde{\psi}$ is a ring isomorphism it is
enough to check that
$$\tilde{\psi}(u_gu_h)=\tilde{\psi}(\alpha (g,h)u_{gh})=\psi(\alpha (g,h))v_{gh}=\alpha^r (g,h)v_{gh}=v_gv_h=\tilde{\psi}(u_g)\tilde{\psi}(u_h).$$

Next, we assume that $\gcd(n,r)=d$ where $d \neq 1$.
Let $s=\frac{r}{d}$. Then, $r=sd$ and  $\gcd(s,n)=1$. Consequently 
$$\mathbb{C} ^{\alpha ^r}G=\mathbb{C} ^{(\alpha ^d)^s}G\cong \mathbb{C} ^{\alpha ^d}G,$$
where the right isomorphism derives from the fact that the order of $[\alpha]^d$ is prime to $s$.
Hence we may assume that $r$ is a divisor of $n$.	

We will reduce the problem to $r$ being a prime divisor of $n$. Let $p$ be a prime divisor of $r$. Denote by $G_p$ a Sylow $p$-subgroup of $G$. By Proposition~\ref{prop:Higgs88} in order to show that $c_{G,\alpha} > c_{G,\alpha ^r}$ it is sufficient to show that $c_{G_p,\alpha}$ is greater than $c_{G_p,\alpha^r}$.
Note that since the $p$-part of $M(G)$ is embedded inside $M(G_p)$ \cite[Corollary 2.3.24 and its proof]{KarpilovskyProjective} 
%(see \cite[page 84]{brown1982graduate}) \ChLeo{change citation or specify}\ChOfir{I gave the page, it is not written as a Theorem},
we know that $[\alpha]$ is not trivial as an element of $M(G_p)$. 
Hence it will be sufficient to assume that the order of $[\alpha] \in M(G_p)$ is a power of $p$. In order to finish the proof we will show that 
$c_{G_p,\alpha}$ is greater than $c_{G_p,\alpha^p}$. 

Since $p$ is a divisor of the order of $[\alpha]$, the dimension of any element in $\text{Proj}(G_p,\alpha)$ is divisible by $p$, cf. Proposition~\ref{prop:Higgs88}. Therefore the dimension of any irreducible $\alpha$-representation can be written as $p^j$ such that $j\geq 1$ \cite[Main Theorem]{Higgs1988}. Hence $c_{G_p, \alpha}$ is a $p$-power and there is an irreducible $\alpha$-representation of dimension $c_{G_p,\alpha}$. Let $\eta \in \text{Proj}(G_p,\alpha)$ be such a representation.
Now, denote by $\gamma$ the $p$-exterior power of $\eta$ with itself, cf. \cite[Section 9.8]{Rotman} for the definition. Then, $\gamma \in \text{Proj}(G_p,\alpha^p)$ \cite[Section 2.2]{GinosarSchnabel}, and additionally, by \cite[Theorem 9.140]{Rotman},
$$\text{dim}(\gamma)=\binom{c_{G_p,\alpha}}{p},$$ 
and hence $\text{dim}(\gamma)$ is divisible by $\frac{c_{G_p, \alpha}}{p}$. Thus $c_{G_p,\alpha}$ is greater than $c_{G_p,\alpha^p}$. 
\end{proof}

\begin{remark}
It would be interesting to know if a result like Theorem~\ref{th:isomiffcoprime} holds for all fields, i.e. whether $F^\alpha G \cong F^{\alpha^r} G$ if and only if $r$ is coprime with the order of $[\alpha] \in H^2(G,F^*)$. We are not aware of a counterexample, but it is also clear that the proof given above can not be directly translated to the general situation. 
\end{remark}

\section{Proof of Theorem 4}\label{sec:Proof of Theorem 4}
In this section we will prove Theorem 4 and give some additional results concerning sufficient conditions for $G\sim _F H$.
We split this section into two parts. In the first part we give some positive results for Question~\ref{Q:suff_conditions} (ii) and in the second part we give a negative answer to Question~\ref{Q:suff_conditions} (i).

\subsection{Positive results for Question~\ref{Q:suff_conditions} (ii)}\label{sec:PosStuffOnC}
For a group $G$, denote by $S_G$ a Schur cover of $G$.
Then, the following is an immediate corollary of Theorem~\ref{th:isomiffcoprime}.
\begin{corollary}\label{cor:cohomologyC_p}
	Let $G$ and $H$ be groups such that $\C G\cong \C H$, $\C S_G \cong \C S_H$ and $M(G) \cong M(H) \cong C_p$. Then $G \sim _{\C} H$.
\end{corollary}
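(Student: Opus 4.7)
The plan is to combine the Schur cover decomposition with Theorem~\ref{th:isomiffcoprime} to reduce everything to the isomorphism type of a single non-trivial twisted group algebra. Fix generators $[\alpha]$ of $M(G)$ and $[\beta]$ of $M(H)$, both of order $p$. The Schur cover $S_G$ is, by construction, a central stem extension of $G$ by $M(G)\cong C_p = \langle [\alpha]\rangle$, so (as an instance of Proposition~\ref{prop:partialSchurcover} applied with $G_\alpha = S_G$) one has
\[
\C S_G \;\cong\; \bigoplus_{i=0}^{p-1} \C^{\alpha^i}G,\qquad \C S_H \;\cong\; \bigoplus_{j=0}^{p-1} \C^{\beta^j}H,
\]
with the $i=0$ (resp.\ $j=0$) component being $\C G$ (resp.\ $\C H$).

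Next I would invoke Theorem~\ref{th:isomiffcoprime}: since $\circ([\alpha])=\circ([\beta])=p$, every exponent $r\in\{1,\ldots,p-1\}$ is coprime with $p$, so
\[
\C^{\alpha^r}G \cong \C^\alpha G \quad\text{and}\quad \C^{\beta^r}H \cong \C^\beta H
\]
for all such $r$. Consequently the decompositions above collapse to
\[
\C S_G \cong \C G \oplus (\C^\alpha G)^{p-1},\qquad \C S_H \cong \C H \oplus (\C^\beta H)^{p-1}.
\]

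Now the hypotheses $\C G\cong \C H$ and $\C S_G\cong \C S_H$, combined with uniqueness of the Wedderburn decomposition of a finite-dimensional semisimple $\C$-algebra (equivalently, cancellation from Krull--Schmidt), force $(\C^\alpha G)^{p-1}\cong (\C^\beta H)^{p-1}$ and hence $\C^\alpha G\cong \C^\beta H$. Define $\psi:M(G)\to M(H)$ by $\psi([\alpha]^i)=[\beta]^i$; this is a group isomorphism of cyclic groups of order $p$. For $i=0$ the required isomorphism $\C^{\alpha^i}G\cong \C^{\beta^i}H$ is $\C G\cong \C H$, and for $1\le i\le p-1$ it follows by chaining
\[
\C^{\alpha^i}G \;\cong\; \C^\alpha G \;\cong\; \C^\beta H \;\cong\; \C^{\beta^i}H,
\]
where the outer isomorphisms come from Theorem~\ref{th:isomiffcoprime}. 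This yields $G\sim_\C H$.

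There is no real obstacle beyond bookkeeping: the only point to double-check is that the ``Schur cover decomposition'' is literally an instance of Proposition~\ref{prop:partialSchurcover} for the generator $[\alpha]$ of $M(G)\cong C_p$, i.e.\ that $G_\alpha$ can be chosen to be $S_G$. This is standard, and once granted, the rest is a direct application of Theorem~\ref{th:isomiffcoprime} and Wedderburn--Krull--Schmidt.
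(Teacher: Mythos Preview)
Your proof is correct and follows essentially the same approach as the paper's: both use the Schur cover decomposition $\C S_G \cong \bigoplus_{i=0}^{p-1}\C^{\alpha^i}G$ together with Theorem~\ref{th:isomiffcoprime} and Wedderburn cancellation to deduce $\C^\alpha G \cong \C^\beta H$, then define $\psi$ on generators. One small simplification: you do not need to identify $G_\alpha$ with $S_G$ via Proposition~\ref{prop:partialSchurcover}, since the decomposition $\C S_G \cong \bigoplus_{[\gamma]\in M(G)}\C^\gamma G$ is precisely the defining property of a Schur cover recalled in Section~\ref{sec:H2}.
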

\begin{proof}
	Let $[\alpha]$ and $[\beta]$ be generators of $M(G)$ and $M(H)$ respectively. Then, since $\C G\cong \C H$, we have
	$$\bigoplus _{i=1}^{p-1}\C ^{\alpha ^i}G\cong \bigoplus _{i=1}^{p-1}\C ^{\beta ^i} H.$$ 
	The result now follows from Theorem~\ref{th:isomiffcoprime} since $\C ^{\alpha ^i}G\cong \C ^{\alpha ^j}G$ and $\C ^{\beta ^i}G\cong \C ^{\beta ^j}G$ for any $1\leq i, j \leq p-1$.
\end{proof}

The following is an elementary observation.

\begin{lemma}
Let $F$ be a field and let $G$ and $H$ be groups satisfying the $F$-bijectivity condition and in addition $H^2(G,F^*)\cong H^2(H,F^*)\cong C_2\times C_2$. Then $G\sim _F H$.
\end{lemma}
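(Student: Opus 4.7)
The plan is to show that the set bijection $\varphi:H^2(G,F^*)\to H^2(H,F^*)$ furnished by the $F$-bijectivity condition is already a group isomorphism, so one can simply take $\psi=\varphi$. For this I need two ingredients.

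First, I would isolate an algebra-theoretic invariant that distinguishes the trivial cohomology class. The natural one is the existence of a one-dimensional $F$-representation: such a module for $F^{\alpha}G$ amounts to a map $\mu:G\to F$ with $\mu(g)\mu(h)=\alpha(g,h)\mu(gh)$, and the identity $\mu(g)\mu(g^{-1})=\alpha(g,g^{-1})\in F^{*}$ forces $\mu$ to take values in $F^{*}$. Hence such a module exists precisely when $\alpha$ is a coboundary, i.e.\ when $[\alpha]=1$ in $H^{2}(G,F^{*})$. Since $FG=F^{1_G}G$ admits the augmentation, and the existence of a one-dimensional $F$-algebra quotient is preserved under $F$-algebra isomorphism, the isomorphism $FG\cong F^{\varphi(1_G)}H$ forces $\varphi(1_G)=1_H$.

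Second, I would exploit the group-theoretic rigidity of $C_{2}\times C_{2}$: any set bijection between two copies of $C_{2}\times C_{2}$ that fixes the identity is automatically a group isomorphism. Writing $C_{2}\times C_{2}=\{1,a,b,c\}$ with $ab=c$, every non-identity element has order $2$, and the product of any two distinct non-identity elements is the remaining non-identity element. Consequently, if $\psi$ permutes the three non-identity classes, then $\psi(xy)=\psi(x)\psi(y)$ holds trivially when $x=y$ (both sides equal $1$) and also when $x\neq y$ (both sides equal the remaining non-identity class).

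Combining the two observations, $\varphi$ itself is a group isomorphism realising $F^{\alpha}G\cong F^{\varphi(\alpha)}H$ for every $[\alpha]$, so $G\sim_{F}H$. The only delicate point I foresee is the first step, where one should be careful about whether $\cong$ is interpreted as ring or $F$-algebra isomorphism in the definition of the $F$-bijectivity condition; under the standard $F$-algebra convention used elsewhere in the paper the invariant transfers immediately, and otherwise one can precompose with a field automorphism of $F$ to convert the ring quotient back into an $F$-algebra quotient without affecting the conclusion.
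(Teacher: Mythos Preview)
Your argument is correct and follows the same route as the paper: the paper's proof is the single sentence ``any set bijection from $C_2\times C_2$ to itself fixing the trivial element is a group automorphism.'' You add the justification the paper leaves implicit, namely that the $F$-bijectivity bijection must send the trivial class to the trivial class (via the existence of a one-dimensional module), and you correctly flag the ring-versus-$F$-algebra subtlety in that step.
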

\begin{proof}
The claim follows immediately from the fact that any set bijection from $C_2\times C_2$ to itself fixing the trivial element is a group automorphism.
\end{proof}

We will prove a similar lemma for $H^2(G,F^*)\cong H^2(H,F^*)\cong C_3\times C_3$, but only for $F$ being the complex numbers.

\begin{lemma}
	Let $G$ and $H$ be groups satisfying the $\C$-bijectivity condition and in addition $M(G) \cong M(H) \cong C_3 \times C_3$. Then $G\sim _\C H$.
\end{lemma}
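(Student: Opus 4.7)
The plan is to combine Theorem~\ref{th:isomiffcoprime} with the fact that $\Aut(C_3\times C_3)\cong GL_2(\F_3)$ realizes every permutation of the four cyclic subgroups of order $3$: once the given set bijection is seen to respect these subgroups up to matching algebra classes, a group isomorphism can be written down.

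First, I would argue that the $\C$-bijectivity map $\varphi:M(G)\to M(H)$ must send $1$ to $1$. Because $\C^*$ is divisible, $\Ext(G/G',\C^*)=0$ and likewise for $H$, so by Theorem~\ref{th:UCT} the inflation subgroup is trivial; Proposition~\ref{prop:SalatimTheorem} then identifies the trivial class as the unique $[\alpha]\in M(G)$ whose twisted group algebra admits a commutative Wedderburn component, and likewise for $H$. Hence $\varphi$ must restrict to a bijection of the eight nontrivial classes.

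Next, using Theorem~\ref{th:isomiffcoprime} applied to order-$3$ classes, I would observe that $\C^\alpha G$ depends only on the cyclic subgroup $\langle[\alpha]\rangle$, and analogously for $H$. Labelling the four order-$3$ cyclic subgroups of $M(G)$ by $\mathcal{C}_1,\ldots,\mathcal{C}_4$ with associated algebra classes $A_1,\ldots,A_4$, and $\mathcal{D}_j,B_j$ for $H$, a counting argument against the bijection $\varphi$ (each class $A$ is realised by $2|\{i:A_i=A\}|$ nontrivial elements of $M(G)$) forces the multiset equality $\{A_1,\ldots,A_4\}=\{B_1,\ldots,B_4\}$. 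Hence there is some bijection $\tau$ on $\{1,2,3,4\}$ with $A_i=B_{\tau(i)}$.

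Finally, I would exploit that $\Aut(C_3\times C_3)$ surjects via $GL_2(\F_3)\twoheadrightarrow PGL_2(\F_3)\cong S_4$ onto the symmetric group on the four order-$3$ subgroups, so $\tau$ can be realised as $\psi(\mathcal{C}_i)=\mathcal{D}_{\tau(i)}$ for some group isomorphism $\psi:M(G)\to M(H)$ (take any isomorphism and postcompose with a suitable automorphism of $M(H)$). By construction $\C^\alpha G\cong\C^{\psi(\alpha)}H$ for every $[\alpha]\in M(G)$, proving $G\sim_\C H$. The only delicate ingredient is the identification $\varphi(1)=1$ via the commutative-component criterion, which is where divisibility of $\C^*$ is used essentially; the rest is the combinatorics of $C_3\times C_3$ and its automorphism group.
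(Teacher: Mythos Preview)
Your proof is correct and follows essentially the same route as the paper: reduce via Theorem~\ref{th:isomiffcoprime} to the four cyclic subgroups of $C_3\times C_3$, and then use that $\Aut(C_3\times C_3)$ acts on these as $PGL_2(\F_3)\cong S_4$ to realise the required permutation by a group isomorphism. Your explicit justification that $\varphi(1)=1$ via Proposition~\ref{prop:SalatimTheorem} and divisibility of $\C^*$, together with the counting argument for the multiset equality, makes rigorous two points the paper leaves implicit.
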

\begin{proof}
Denote 
$$M(G)=\langle \alpha \rangle \times \langle \beta \rangle,\quad M(H)=\langle \tilde{\alpha} \rangle \times \langle \tilde{\beta}\rangle.$$
By Theorem~\ref{th:isomiffcoprime} 
$$\C ^{\alpha}G\cong \C ^{\alpha ^2}G,\quad \C ^{\beta}G\cong \C ^{\beta ^2}G,\quad \C ^{\alpha \beta}G\cong \C ^{\alpha ^2\beta ^2}G,\quad \C ^{\alpha ^2\beta}G\cong \C ^{\alpha  \beta^2}G,$$
and similarly
$$\C ^{\tilde{\alpha}}H \cong \C ^{\tilde{\alpha} ^2}H,\quad \C ^{\tilde{\beta}}H \cong \C ^{\tilde{\beta}^2}H,\quad \C ^{\tilde{\alpha} \tilde{\beta}}H \cong \C ^{\tilde{\alpha}^2\tilde{\beta}^2}H,\quad \C ^{\tilde{\alpha}^2\tilde{\beta}}H \cong \C^{\tilde{\alpha}  \tilde{\beta^2}}H.$$
So, the map $\psi $ providing the $\C$-bijectivity is a permutation on these (at most) four isomorphism types. The action of $\Aut(C_3 \times C_3)$  on the non-trivial cyclic subgroups of $C_3 \times C_3$ is given by $\Aut(C_3\times C_3)/Z(\Aut(C_3\times C_3)) \cong \operatorname{PGL}(2,3) \cong S_4$. Hence any permutation of the non-trivial cyclic subgroups of $C_3 \times C_3$ can be realized by an automorphism which implies that any permutation of the isomorphism types of the twisted group algebras can be realized by an isomorphism between the Schur multipliers.
\end{proof}

We do not have a similar proof for the case that the Schur multiplier is $C_p\times C_p$ for $p \geq 5$. Clearly the proofs above do not work. However, we do neither have a counterexample in this case.

Maybe the most interesting positive result on Question~\ref{Q:suff_conditions} is the following.
\begin{theorem}\label{th:cyclicSchur}
	Let $G$ and $H$ be groups satisfying the $\C$-bijectivity condition and in addition $M(G) \cong M(H) \cong C_n$. Then $G\sim_{\mathbb{C}} H$.
\end{theorem}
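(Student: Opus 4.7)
My plan is to promote the mere set bijection $\varphi$ given by the $\C$-bijectivity to a genuine group isomorphism $\psi\colon M(G)\to M(H)$. Fix generators $\alpha$ of $M(G)$ and $\beta$ of $M(H)$, both of order $n$. By Theorem~\ref{th:isomiffcoprime}, the isomorphism type of $\C^{\alpha^k}G$ depends only on $d = \gcd(k,n)$; abbreviate $R^G_d := \C^{\alpha^d}G$ and $R^H_d := \C^{\beta^d}H$ for each $d\mid n$. The $\C$-bijectivity applied to $\alpha^d$ produces some divisor $\tilde f(d)$ of $n$ with $R^G_d\cong R^H_{\tilde f(d)}$, and the whole proof reduces to showing $\tilde f(d)=d$ for every $d\mid n$. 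Indeed, once this is known, the map $\psi\colon \alpha^k\mapsto\beta^k$ is a group isomorphism $M(G)\to M(H)$, and
\[
\C^{\alpha^k}G \;\cong\; R^G_{\gcd(k,n)} \;\cong\; R^H_{\gcd(k,n)} \;\cong\; \C^{\beta^k}H \;=\; \C^{\psi(\alpha^k)}H,
\]
so $G\sim_\C H$.

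To establish $\tilde f=\id$ I will work one prime at a time via Proposition~\ref{prop:Higgs88}(3). Fix a prime $p\mid n$, let $a_p := v_p(n)$, and let $G_p, H_p$ be Sylow $p$-subgroups of $G$ and $H$ respectively. Since restriction kills the prime-to-$p$ part of the Schur multiplier, $(\alpha|_{G_p})^d$ has order $p^{a_p-v_p(d)}$ in $M(G_p)$, and by Proposition~\ref{prop:Higgs88}(3) the $p$-part of $c_{G,\alpha^d}$ equals $c_{G_p,(\alpha|_{G_p})^d}$, depending on $d$ only through $v_p(d)$. I will therefore introduce
\[
c^{(p)}_G(e) := c_{G_p,\,(\alpha|_{G_p})^{p^e}}, \qquad e\in\{0,1,\ldots,a_p\},
\]
and define $c^{(p)}_H$ analogously. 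Applying Theorem~\ref{th:isomiffcoprime} inside $M(G_p)$, every step $e\rightsquigarrow e+1$ raises the class to a power divisible by $p$ (hence not coprime to its order), so $c^{(p)}_G$ is strictly decreasing on $\{0,\ldots,a_p\}$; the same holds for $c^{(p)}_H$.

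The key step is then that $R^G_d\cong R^H_{\tilde f(d)}$ implies $c_{G,\alpha^d}=c_{H,\beta^{\tilde f(d)}}$, and extracting the $p$-part gives $c^{(p)}_G(v_p(d)) = c^{(p)}_H(v_p(\tilde f(d)))$. As $d$ ranges over divisors of $n$, $v_p(d)$ attains every value in $\{0,\ldots,a_p\}$, so the image of $c^{(p)}_G$ is contained in the image of $c^{(p)}_H$; using $\varphi^{-1}$ (which equally witnesses the $\C$-bijectivity) produces the reverse inclusion. The two images therefore coincide, and since $c^{(p)}_G$ and $c^{(p)}_H$ are strictly decreasing bijections from $\{0,\ldots,a_p\}$ onto this common image, they must agree as functions. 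Injectivity of $c^{(p)}_G$ then forces $v_p(\tilde f(d)) = v_p(d)$ for every prime $p\mid n$, whence $\tilde f(d)=d$, completing the proof.

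The hard part is precisely this last identification. The $\C$-bijectivity leaves substantial freedom in choosing $\tilde f$, and on a non-linear divisor poset of $n$ (for example when $n = p^2q^2$) the coarse invariant $c_{G,\cdot}$ need not be injective, so monotonicity alone is insufficient. Proposition~\ref{prop:Higgs88}(3) saves the day by refining $c_{G,\cdot}$ into prime-local pieces that live on the linear chain $\{0,\ldots,a_p\}$, where the strict monotonicity coming from Theorem~\ref{th:isomiffcoprime} combined with the symmetry of the hypothesis under $\varphi\leftrightarrow\varphi^{-1}$ rules out any permutation other than the identity.
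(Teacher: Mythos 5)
Your proof is correct, but it takes a different route from the one in the paper, and in fact proves more in full detail. The paper's argument fixes a generator $[\alpha]$ of $M(G)$ and shows only that the bijection sends it to a generator of $M(H)$: it compares the number of classes realizing the isomorphism type of $\C^\alpha G$ (the $\varphi(n)$ generators, by Theorem~\ref{th:isomiffcoprime}) with the corresponding count in $M(H)$, deduces $\varphi(n)=\varphi(k)$ for $k$ the order of the image, and eliminates the residual possibility $n=2k$ with $k$ odd by the strict inequality $c_{H,\beta}<c_{H,\gamma}$ from Theorem~\ref{th:isomiffcoprime}; the extension from generators to all of $M(G)$ is then left as ``straightforward''. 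You instead pin down the entire divisor correspondence $\tilde f$ at once, replacing the totient count by the prime-local invariants $e\mapsto c_{G_p,(\alpha|_{G_p})^{p^e}}$ obtained from Proposition~\ref{prop:Higgs88}(3), whose strict monotonicity on the chain $\{0,\ldots,v_p(n)\}$ (again from Theorem~\ref{th:isomiffcoprime}) together with the symmetry under $\varphi\leftrightarrow\varphi^{-1}$ forces $\tilde f=\id$. What your approach buys is precisely the step the paper elides: it shows $\C^{\alpha^d}G\cong\C^{\beta^d}H$ for \emph{every} divisor $d$ of $n$, not just $d=1$, and it sidesteps the delicate point in the totient argument that classes of incomparable orders in a cyclic group are not directly compared by Theorem~\ref{th:isomiffcoprime} (so that the fiber in $M(H)$ over the isomorphism type of $\C^\beta H$ is not obviously of size exactly $\varphi(k)$). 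The cost is the extra machinery of Sylow localization, which the paper's proof of this particular theorem does not invoke, though it is already present in the proof of Theorem~\ref{th:isomiffcoprime} itself. Your closing observation about why the global invariant $c_{G,\cdot}$ alone cannot separate divisors on a non-linear divisor lattice is accurate and is exactly the reason the localization is needed.
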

\begin{proof}
Denote the map which corresponds to the $\C$-bijectivity by $\psi :M(G)\rightarrow M(H)$.
Let $[\alpha]$ be a generator of $M(G)$. We will prove that $\psi ([\alpha])=[\beta]$ is a generator of $M(H)$. Assume $[\beta]$ is of order $k$, a divisor of $n$. Owing to Theorem~\ref{th:isomiffcoprime}, $\varphi (n)=\varphi (k)$ where $\varphi$ denotes Euler's totient function. Since $k$ is a divisor of $n$ we get that either $k=n$ or $n=2k$ where $k$ is an odd integer.

Assume the latter. 
%Then there exists a generator $[\gamma]\in M(H)$ such that $[\beta]=[\gamma]^2$, and additionally $\mathbb{C}^{\alpha ^2}G \cong \mathbb{C}^{\gamma}H$. However by Theorem~\ref{th:isomiffcoprime} 
%$(c_{G,\beta})_2$ is strictly smaller than 
%$(c_{G,\gamma})_2$ which is strictly smaller than 
%$(c_{G,\alpha})_2$ which contradicts $\mathbb{C}^{\alpha}G \cong \mathbb{C}^{\beta}H$. We proved that $\psi$ sends a generator of $M(G)$ to a generator of $M(H)$. From here it is straightforward to prove $G\sim_{\mathbb{C}} H$ using Theorem~\ref{th:isomiffcoprime}.
%
%\ChLeo{Alternative version after ``Assume the latter.''}\\
Then there exists a generator $[\gamma]\in M(H)$ such that $[\beta]=[\gamma]^2$, implying $c_{H,\beta} < c_{H,\gamma}$ by Theorem~\ref{th:isomiffcoprime}. On the other hand $\mathbb{C}^\gamma H \cong \mathbb{C}^{\alpha^r}$ for some natural number $r$, implying $c_{H, \gamma} \leq c_{G, \alpha}$, also by Theorem~\ref{th:isomiffcoprime}. Hence $c_{H,\beta} < c_{G,\alpha}$, which contradicts $\mathbb{C}^\alpha G \cong \mathbb{C}^\beta H$. We proved that $\psi$ sends a generator of $M(G)$ to a generator of $M(H)$. From here it is straightforward to prove $G\sim_{\mathbb{C}} H$ using Theorem~\ref{th:isomiffcoprime}.
\end{proof}

\subsection{Negative answer for Question~\ref{Q:suff_conditions} (i)}\label{Negative}
Throughout this section $G$ and $H$ will be the following groups, which we introduce with abuse of notation (we use the same names for generators of the groups). The identities of these groups in the SmallGroupLibrary of GAP \cite{GAP} are $\texttt{[288, 484]}$ and $\texttt{[288, 603]}$, but we will only employ theoretical arguments. 
The groups $G$ and $H$ are generated by elements $a,b,c,d,e$ such that 
\[a^3=b^3=c^4=d^4=e^2=1.\]
The Sylow $3$-subgroup in both groups is $\langle a \rangle \times \langle b \rangle \cong C_3 \times C_3$. A Sylow $2$-subgroup in both groups is given by 

\[(\langle c \rangle \times \langle d \rangle) \rtimes \langle e \rangle \ \text{where} \ [c,e] = d^2 \ \text{and} \ [d,e] = 1.\]
Both groups can be written as semidirect products of the form
\[ (\langle a \rangle \times \langle b \rangle) \rtimes ((\langle c \rangle \times \langle d \rangle) \rtimes \langle e \rangle) \]
where in both groups the following relations hold
\[[a,c] = 1, [a,d] = a, [b,d] = 1 \]
and
\begin{align*}
&\text{in} \ G: \ [a,e] =a, \ [b,c] = 1, \ [b,e]=b, \\
&\text{in} \ H: \ [a,e] =1, \ [b,c] = b, \ [b,e]=1.
\end{align*}

We will prove that $G$ and $H$ satisfy $M(G) \cong M(H)$ and the $\C$-bijectivity condition, but $G\not \sim_{\mathbb{C}} H$. 
In both groups the commutator subgroup is
$$\langle a \rangle \times \langle b \rangle \times \langle d ^2 \rangle \cong C_3\times C_3\times C_2.$$ 
We start with computing the Schur multipliers of these groups. 
Consider both groups as semi-direct products of the normal subgroup $N=\langle a \rangle \times \langle b \rangle$ and the subgroup $T=\left(\langle c \rangle \times \langle d \rangle \right) \rtimes \langle e \rangle.$
We have $M(C_3 \times C_3) \cong C_3$. So $M(N)$ is cyclic of order $3$, a generator being determined by the relation $[u_a,u_b]=\zeta _3$ in the twisted group algebra, where $\zeta_3$ denotes a primitive $3$-rd root of unity in $\C$. However this cohomology class is not invariant under the action of $T$.
Indeed
$$[u_{d(a)},u_{d(b)}]=[u_{a^2},u_b]=\zeta _3^2\neq [u_a,u_b].
$$
 
 Consequently, by Lemma~\ref{lemma:semicoprimeSchur}
$$M(G)\cong M(T)\cong M(H).$$

Now consider $N_1 = \langle c\rangle \times \langle d \rangle \cong C_4\times C_4$ and $T_1 = \langle e \rangle \cong C_2$. Then $T=N_1\rtimes T_1$.
Clearly $M(N_1)\cong C_4$, a generator being determined by the relation $[u_c,u_d]=\zeta _4$ in the twisted group algebra, where $\zeta_4$ denotes a primitive $4$-th root of unity in $\C$. It is easy to check that this cohomology class is invariant under the $T_1$-action. Another nontrivial cohomology class of $T$ is determined by $[u_d,u_e]=-1$ and it is easy to show that these two non-trivial cohomology classes generate $M(G)\cong M(H)$.

\begin{corollary}\label{cor:SchurC4XC2}
	With the above notation 
	$$M(G)\cong M(H)\cong C_2\times C_4=\langle \alpha \rangle \times \langle \beta \rangle,$$
	where 
	$$\alpha :\ [u_d,u_e]=-1,\quad \beta :\ [u_c,u_d]=\zeta _4.$$
\end{corollary}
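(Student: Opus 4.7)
The plan is to collect the observations made in the paragraphs preceding the statement and complete one remaining upper bound. First, applying Lemma~\ref{lemma:semicoprimeSchur} to the decomposition $G = N \rtimes T$ (with $|N| = 9$ and $|T| = 32$ coprime) and using that the unique nontrivial class of $M(N)\cong C_3$ fails to be $T$-invariant (because $d$ inverts $a$ while fixing $b$, sending $[u_a, u_b] = \zeta_3$ to $\zeta_3^2$), we obtain $M(G) \cong M(T)$. Exactly the same argument, with the role of $d$ replaced by $c$ in the $H$-case, gives $M(H) \cong M(T)$. Thus it suffices to identify $M(T)$.

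Next I would exhibit $\langle \alpha \rangle \times \langle \beta \rangle \cong C_2 \times C_4$ as a subgroup of $M(T)$. The class $\beta$, determined by $[u_c, u_d] = \zeta_4$, is a generator of $M(N_1) \cong C_4$ which was shown to be $T_1$-invariant, so it lifts to a class of order exactly $4$ in $M(T)$ (the order cannot drop since its restriction to $N_1$ is still of order $4$). The class $\alpha$, determined by $[u_d, u_e] = -1$, visibly has order dividing $2$, is easily seen to be nontrivial by inspection of the associated projective representation of $T$, and is independent of $\beta$ because its restriction to $N_1$ is trivial while that of $\beta$ is not.

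The main remaining obstacle is the matching upper bound $|M(T)| \leq 8$. The plan here is to apply the inflation–restriction exact sequence associated with the split extension $1 \to N_1 \to T \to T_1 \to 1$:
\[
1 \to H^1(T_1, \Hom(N_1, \C^*)) \to H^2(T, \C^*) \to H^2(N_1, \C^*)^{T_1}.
\]
The right-hand group is cyclic of order $4$, generated by $\beta$, since the three non-trivial cyclic subgroups of $M(N_1)$ corresponding to $\zeta_4^{\pm 1}, \zeta_4^2$ are fixed by $T_1$ only for the first pair (a one-line check using the action $c \mapsto cd^2$, $d \mapsto d$). For the left-hand group, identify $\Hom(N_1, \C^*) \cong C_4 \times C_4$ with the action of $e$ interchanging the two summands by a controlled amount, and a short computation yields $|H^1(T_1, \Hom(N_1, \C^*))| = 2$. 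This forces $|M(T)| \leq 8$ and, combined with the lower bound, gives equality and the displayed isomorphism with the stated generators.

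The hard step will be the cocycle/cocycle boundary computation producing $|H^1(T_1, \Hom(N_1, \C^*))| = 2$, because here the $T_1$-module is not semisimple at the prime $2$; as a fallback I would simply invoke a direct computation in GAP \cite{GAP} of the Schur multiplier of $T$ (which is a group of order $32$ and hence small enough) to confirm $M(T) \cong C_2 \times C_4$, then cite the explicit description of $\alpha$ and $\beta$ given above to identify the generators.
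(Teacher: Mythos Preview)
Your reduction to $M(T)$ and the exhibition of $\langle\alpha\rangle\times\langle\beta\rangle$ as a subgroup of $M(T)$ follow exactly the paper's line of argument; in fact the paper stops there, asserting only that ``it is easy to show'' that $\alpha$ and $\beta$ generate $M(T)$, without supplying an upper bound. Your Lyndon--Hochschild--Serre argument genuinely fills that gap, and the computation $|H^1(T_1,\Hom(N_1,\C^*))|=2$ is correct.

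Two small points deserve tightening. First, your sentence about ``the three non-trivial cyclic subgroups of $M(N_1)$'' is garbled: $M(N_1)\cong C_4$ is cyclic, so the right thing to say (and what the paper says) is simply that the $T_1$-action on $M(N_1)$ is trivial, because $[u_{e(c)},u_{e(d)}]=[u_{cd^2},u_d]=[u_c,u_d]$, whence $M(N_1)^{T_1}\cong C_4$. Second, the exact sequence
\[
1 \to H^1(T_1,\Hom(N_1,\C^*)) \to H^2(T,\C^*) \to H^2(N_1,\C^*)^{T_1}
\]
is not the usual five-term sequence; it comes from the $E_2$-page only after you use that $E_2^{2,0}=H^2(T_1,\C^*)=1$ \emph{and} that the differential $d_2^{1,1}\colon E_2^{1,1}\to E_2^{3,0}$ vanishes. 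The latter holds because the extension is split (so all differentials landing on the bottom row are zero), and you should say so explicitly. With these two clarifications your argument is complete and in fact more detailed than the paper's.
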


Notice that it is not surprising, by the results of Section~\ref{sec:PosStuffOnC}, that in the negative example we found to Question~\ref{Q:suff_conditions} (i) the groups admit a Schur multiplier isomorphic to $C_4\times C_2$.

We now start computing the Wedderburn decomposition of the twisted group algebras of these groups.
We will start with the group algebras.

\begin{lemma}\label{lem:groupalgebra}
	With the above notation 
	$$\C G\cong \C H\cong 16 \C \oplus 28 \C^{2\times 2}\oplus 10 \C^{4\times 4}.$$
\end{lemma}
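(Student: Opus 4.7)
The plan is to exploit that both groups decompose as $G = N \rtimes T$ and $H = N \rtimes T$, where $N = \langle a,b\rangle \cong C_3 \times C_3$ is the normal Sylow $3$-subgroup and $T = \langle c,d,e\rangle$ is the \emph{same} abstract group of order $32$ in both cases, presented by $c^4 = d^4 = e^2 = 1$, $[c,d] = 1$, $[c,e] = d^2$, $[d,e] = 1$; only the action of $T$ on $N$ differs. Since $N$ is abelian, $\C N$ decomposes into primitive idempotents $e_\lambda$ indexed by the character group $\widehat N \cong N$, and for each $T$-orbit $O \subseteq \widehat N$ the element $\sum_{\lambda \in O} e_\lambda$ is a central idempotent of $\C G$ on which $T$ permutes the $e_\lambda$ transitively by conjugation. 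Applying Lemma~\ref{lem:PassmanMatrices} will give
\[
\C G \cong \bigoplus_{O \in T \backslash \widehat N} M_{|O|}(\C T_\lambda),
\]
where $T_\lambda$ is the stabilizer in $T$ of a representative of $O$, and the same formula holds for $\C H$.

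The first step is to list the orbits: in both groups the action of $T$ on $N$ factors through $T/C_T(N) \cong C_2 \times C_2$ acting by independent inversions of $a$ and $b$, so the four orbits have sizes $1, 2, 2, 4$ in both cases. The second step is to identify $\C T_\lambda$ for a representative of each orbit. The trivial orbit yields $\C T$, which I will compute to be $16\C \oplus 4 M_2(\C)$ from $T' = \langle d^2\rangle$ and $T/T' \cong C_4 \times C_2 \times C_2$. The size-$4$ orbit has stabilizer $C_T(N)$, which is abelian of order $8$ in both groups (isomorphic to $C_4 \times C_2$ in $G$ and elementary abelian of order $8$ in $H$), contributing $M_4(8\C) = 8 M_4(\C)$. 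For the two size-$2$ orbits, one stabilizer $C_T(b)$ will be abelian of order $16$ (namely $C_4 \times C_4$ in $G$ and $C_2 \times C_4 \times C_2$ in $H$), contributing $M_2(16\C) = 16 M_2(\C)$; the other stabilizer $C_T(a)$ will be non-abelian of order $16$ in both cases, but with commutator subgroup $\langle d^2\rangle$ of order $2$, so $\C C_T(a) \cong 8\C \oplus 2 M_2(\C)$ contributing $M_2(8\C \oplus 2 M_2(\C)) = 8 M_2(\C) \oplus 2 M_4(\C)$. Summing the four contributions will give $16\C \oplus 28 M_2(\C) \oplus 10 M_4(\C)$ for both $\C G$ and $\C H$.

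The main obstacle is that the concrete groups realizing $C_T(a)$, $C_T(b)$, $C_T(N)$ differ between $G$ and $H$, so one must verify case by case that the stabilizer in $G$ and the corresponding one in $H$ produce the same Wedderburn decomposition. For the abelian cases this is immediate from the order, and for the non-abelian $C_T(a)$ it reduces to the short calculation that the commutator subgroup is $\langle d^2\rangle$ of order $2$ in each group, at which point the decomposition is forced by $|C_T(a)| = 16$.
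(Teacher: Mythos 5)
Your proof is correct, but it takes a genuinely different route from the paper. The paper's argument is a counting argument: by It\^{o}'s theorems (Theorems~\ref{th:Isaacs1234} and~\ref{th:Isaacs615}) the irreducible character degrees of $G$ and $H$ lie in $\{1,2,4\}$, the number of degree-one characters is $|G/G'|=16$, and the number of conjugacy classes is $54$; the multiplicities $28$ and $10$ then drop out of the two linear equations $16+x+y=54$ and $16+4x+16y=288$. You instead compute the Wedderburn decomposition constructively via Clifford theory over the normal abelian Sylow $3$-subgroup $N$: the central idempotents $\sum_{\lambda\in O}e_\lambda$ attached to the $T$-orbits on $\widehat N$ reduce everything, via Lemma~\ref{lem:PassmanMatrices}, to the group algebras of the stabilizers $T_\lambda$ (here genuinely untwisted, since $T$ is a complement to $N$), and your identification of the orbit sizes $1,2,2,4$ and of the stabilizers --- $T$ itself, the abelian $C_T(b)$ of order $16$, the non-abelian $C_T(a)$ of order $16$ with $C_T(a)'=\langle d^2\rangle$, and the abelian $C_T(N)$ of order $8$ --- checks out in both groups, giving $16\C\oplus(4+16+8)M_2(\C)\oplus(2+8)M_4(\C)$ as claimed. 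What your approach buys is self-containedness: the paper's proof quietly relies on the unproved assertion that both groups have $54$ conjugacy classes, whereas your computation needs no class counting and moreover pinpoints which components arise from which characters of $N$. What the paper's approach buys is brevity and reusability: the same degree-bound-plus-class-count template is recycled almost verbatim for the twisted group algebras in Lemmas~\ref{lem:betasquare}--\ref{lem:cohomologyoforder2} via Proposition~\ref{prop:partialSchurcover}, where your method would require rerunning the orbit/stabilizer analysis for each cover $G_\omega$.
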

\begin{proof}
	First, notice that by Theorem~\ref{th:Isaacs1234} the degrees of all the characters of $G$ and $H$ are powers of $2$.
	Notice that both groups $G$ and $H$ admit a normal abelian subgroup of index $4$. For $G$ we can choose $\langle a,b,c,d^2\rangle$ and for $H$ we can choose $\langle a,b,c^2,d^2,e\rangle$. Hence, it follows from Theorem~\ref{th:Isaacs615} that the maximal degree of an irreducible character of $G$ and $H$ is smaller or equal to $4$. Consequently, possible dimension of the irreducible representations of these groups are $1,2$ or $4$.
	Next, since $|G/G'|=16=|H/H'|$ we have that $\mathbb{C}G$ and $\mathbb{C}H$ admit exactly $16$ copies of the field $\C$ in their Wedderburn decomposition. 
	To finish the proof observe that both these groups admit $54$ conjugacy classes, and hence the dimension of the center of
	$\mathbb{C}G$ and $\mathbb{C}H$ is $54$. 
\end{proof}

In order to compute the Wedderburn decomposition for the twisted group algebras we will use Proposition~\ref{prop:partialSchurcover}.

\begin{lemma}\label{lem:betasquare}
	With the above notation, 
	$$\C^{\beta ^2}G\cong 32\C ^{2\times 2}\oplus 10 \C ^{4\times 4}
	\text{ and } \C^{\beta ^2}H \cong 16\C ^{2\times 2}\oplus 14 \C ^{4\times 4}.$$
\end{lemma}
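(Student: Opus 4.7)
The plan is to use Proposition~\ref{prop:partialSchurcover} together with Clifford theory for semidirect products with coprime orders. Since $[\beta^2]$ has order $2$ in $M(G) = M(H)$, the proposition gives $\C\tilde G \cong \C G \oplus \C^{\beta^2}G$, where $\tilde G$ is the central extension of $G$ by a central $\langle z\rangle \cong C_2$ determined by $\beta^2$, and similarly $\C\tilde H \cong \C H \oplus \C^{\beta^2}H$. Since $\C G \cong \C H$ is already known from Lemma~\ref{lem:groupalgebra}, the task reduces to computing the direct summand of $\C\tilde G$ (resp.\ $\C \tilde H$) on which the central $z$ acts as $-1$.

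I would then use Corollary~\ref{cor:SchurC4XC2} together with Lemma~\ref{lemma:semicoprimeSchur} to write $M(G)\cong M(T)$ via inflation, and pick a representative of $[\beta^2]$ that is literally inflated from a cocycle on $T$. With this choice, $\tilde G \cong N\rtimes \tilde T$ and $\tilde H\cong N\rtimes \tilde T$, where $\tilde T$ is the common central extension of $T$ by $\langle z\rangle$ of order $64$; the group $\tilde T$ is the same for both $G$ and $H$, only the $\tilde T$-action on $N$ (inherited from $T$) differs.

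Since $\gcd(|N|,|\tilde T|)=(9,64)=1$ and $\beta^2|_N$ is trivial, the primitive central idempotents $e_\chi$ of $\C N$ ($\chi\in\hat N$) decompose $\C^{\beta^2}G$ according to the $T$-orbits on $\hat N$. Applying Lemma~\ref{lem:PassmanMatrices} to each orbit sum of idempotents gives
\[\C^{\beta^2}G \cong \bigoplus_{\mathcal{O}} M_{|\mathcal{O}|}\bigl(\C^{\beta^2|_{T_\chi}} T_\chi\bigr),\]
and analogously for $H$, where $T_\chi\le T$ is the stabilizer of a representative $\chi\in\mathcal{O}$. A direct calculation shows that in both $G$ and $H$ the action of $T$ on $\hat N\cong\mathbb{F}_3^2$ factors through the Klein four-group of diagonal maps $\{I,\operatorname{diag}(\pm1,\pm1)\}$, giving orbits of sizes $1,2,2,4$ with stabilizers of orders $32,16,16,8$; however, the specific stabilizer subgroups, and hence the cocycle restrictions, differ between $G$ and $H$.

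For each stabilizer $T_\chi$, I would apply Proposition~\ref{prop:partialSchurcover} a second time: letting $S_\chi\le\tilde T$ denote the preimage of $T_\chi$, the summand $\C^{\beta^2|_{T_\chi}}T_\chi$ is extracted from $\C S_\chi$ as the part on which $z$ acts as $-1$, using the character theory of the small $2$-group $S_\chi$. The essential structural difference between $G$ and $H$ manifests at one of the size-$2$ orbits: for one group the stabilizer is $\langle c,d\rangle\cong C_4\times C_4$ and its preimage $\langle c,d,z\rangle$ in $\tilde T$ is the non-abelian Heisenberg-type group of order $32$, producing a non-trivial restricted class and a contribution consisting entirely of $4\times 4$ matrix components; for the other group, the corresponding stabilizer is abelian of type $C_2\times C_4\times C_2$ and its preimage in $\tilde T$ is again abelian, producing a trivial restricted class and a contribution consisting entirely of $2\times 2$ components. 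The contribution from the trivial orbit, $\C^{\beta^2|_T}T$, is common to both groups and is obtained by a further Clifford analysis of $\tilde T = \langle c,d,z\rangle\rtimes\langle e\rangle$. Summing the four contributions yields the claimed Wedderburn decompositions. The hard part will be the bookkeeping of the four central extensions $S_\chi$ inside $\tilde T$ and, in each case, deciding whether $S_\chi$ splits as $T_\chi\times\langle z\rangle$ or not, so that the correct twisted group algebra $\C^{\beta^2|_{T_\chi}}T_\chi$ is identified.
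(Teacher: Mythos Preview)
Your approach is correct and genuinely different from the paper's. The paper proceeds by constructing the order-$576$ central extensions $G_{\beta^2}$ and $H_{\beta^2}$ explicitly, invoking Theorems~\ref{th:Isaacs1234} and~\ref{th:Isaacs615} to force all character degrees into $\{1,2,4\}$, and then simply \emph{counting conjugacy classes} of $G_{\beta^2}$ and $H_{\beta^2}$; the two numbers ($96$ versus $84$) together with $|G_i/G_i'|=16$ determine the Wedderburn decompositions by solving a $2\times 2$ linear system, and subtracting the known $\C G\cong \C H$ from Lemma~\ref{lem:groupalgebra} finishes. Your route instead exploits the semidirect-product structure $G=N\rtimes T$ with $\beta^2$ inflated from $T$, so that $\C^{\beta^2}G\cong\bigoplus_{\mathcal O}M_{|\mathcal O|}(\C^{\beta^2}T_\chi)$ via Lemma~\ref{lem:PassmanMatrices}; the orbit pattern $1{+}2{+}2{+}4$ is indeed identical for $G$ and $H$, and the contributions from the trivial orbit, one size-$2$ orbit, and the size-$4$ orbit coincide, so the whole discrepancy is concentrated in the single size-$2$ orbit where one stabilizer is $\langle c,d\rangle\cong C_4\times C_4$ (on which $\beta^2$ restricts nontrivially, giving $4M_2(\C)$ and hence an $M_4$-contribution) and the other is $\langle c^2,d,e\rangle\cong C_2\times C_4\times C_2$ (on which $\beta^2$ restricts trivially, giving $16\C$ and hence an $M_2$-contribution). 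Your argument is more illuminating---it pinpoints exactly where the difference between $G$ and $H$ lives---at the cost of the bookkeeping you anticipate; the paper's argument is quicker because it reduces everything to a single conjugacy-class count that could be delegated to a computer. One caution: when you carry out the bookkeeping, take care with which of $G,H$ receives which decomposition---the assignment depends on a careful reading of the commutator conventions in the defining relations.
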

\begin{proof}
	We will use Proposition~\ref{prop:partialSchurcover}. 
	By the construction of $G_{\omega}$, we get that 
	$$G_{\beta ^2}\cong \left(\overset{a}C_3\times \overset{b}C_3\right)\rtimes \left((\overset{f}C_2\times \overset{c}C_4)\rtimes \overset{d}C_4)\rtimes \overset{e}C_2 \right),$$
	where $f$ is central and $[c,d]=f$ and all the other relations are the same as in $G$.
	Similarly,
	$$H_{\beta ^2}\cong \left(\overset{a}C_3\times \overset{b}C_3\right)\rtimes \left((\overset{f}C_2\times \overset{c}C_4)\rtimes \overset{d}C_4)\rtimes \overset{e}C_2 \right),$$
	where $f$ is central and $[c,d]=f$ and all the other relations are the same as in $H$.
	Then, again, as in Lemma~\ref{lem:groupalgebra}, by Theorem~\ref{th:Isaacs1234} the degrees of all the characters of $G_{\beta ^2}$ and $H_{\beta ^2}$ are powers of $2$. And also, both groups  $G_{\beta ^2}$ and  $H_{\beta ^2}$ admit a normal abelian subgroup of index $4$. For  $G_{\beta ^2}$ we can choose $\langle a,b,c,d^2,f\rangle$ and for  $H_{\beta ^2}$ we can choose $\langle a,b,c^2,d^2,e,f\rangle$. Hence, it follows from Theorem~\ref{th:Isaacs615} that the maximal degree of an irreducible character of  $G_{\beta ^2}$ and  $H_{\beta ^2}$ is smaller or equal to $4$.
	A simple calculation shows that $G_{\beta ^2}$ admits $96$ conjugacy classes while  $H_{\beta ^2}$ admits $84$ conjugacy classes. It follows that
	$$\C G_{\beta ^2}\cong 16\C \oplus 60\C ^{2\times 2}\oplus 20\C ^{4\times 4}$$
	and
	$$\C H_{\beta ^2}\cong 16\C \oplus 44\C ^{2\times 2}\oplus 24\C ^{4\times 4}.$$
	The result follows by Lemma~\ref{lem:groupalgebra} and by noticing that 
	$$\C G_{\beta ^2} \cong \C G\oplus \C ^{\beta ^2}G \text{ and }
	\C H_{\beta ^2} \cong \C H\oplus \C ^{\beta ^2}H.$$ 
\end{proof}

\begin{corollary}\label{cor:notinrelation}
	With the above notation $G\not \sim_{\mathbb{C}} H$.
\end{corollary}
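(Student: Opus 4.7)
The proof reduces to combining the structural analysis of $M(G) \cong M(H) \cong C_2 \times C_4$ from Corollary~\ref{cor:SchurC4XC2} with the Wedderburn computation in Lemma~\ref{lem:betasquare}. The plan is to argue that the class $[\beta^2]$ must be preserved by any would-be isomorphism realizing the relation $\sim_{\mathbb{C}}$, and then to invoke the already established Wedderburn mismatch at $[\beta^2]$.

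First I would isolate the element $[\beta^2]$ inside the abstract group $C_2 \times C_4 = \langle \alpha \rangle \times \langle \beta \rangle$. The nontrivial squares in this group form the subgroup $\{1,\beta^2\}$, equivalently $\beta^2$ is the unique element which is the square of some element of order $4$. In particular $[\beta^2]$ is a characteristic element, so every group isomorphism $\psi \colon M(G) \to M(H)$ must satisfy $\psi([\beta^2]) = [\beta^2]$ (using the identification of generators from Corollary~\ref{cor:SchurC4XC2}).

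Suppose now for contradiction that $G \sim_{\mathbb{C}} H$, witnessed by an isomorphism $\psi \colon M(G) \to M(H)$ with $\mathbb{C}^{\gamma} G \cong \mathbb{C}^{\psi(\gamma)} H$ for every $[\gamma]\in M(G)$. By the previous paragraph $\psi([\beta^2]) = [\beta^2]$, and therefore
\[
\mathbb{C}^{\beta^2} G \cong \mathbb{C}^{\beta^2} H.
\]
However, Lemma~\ref{lem:betasquare} computes
\[
\mathbb{C}^{\beta^2} G \cong 32\,\mathbb{C}^{2\times 2} \oplus 10\,\mathbb{C}^{4\times 4}, \qquad \mathbb{C}^{\beta^2} H \cong 16\,\mathbb{C}^{2\times 2} \oplus 14\,\mathbb{C}^{4\times 4},
\]
and these two semisimple algebras have different numbers of simple components of each dimension, so they are not isomorphic. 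This contradiction yields $G \not\sim_{\mathbb{C}} H$.

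There is essentially no obstacle: all of the nontrivial work (identifying the Schur multiplier, and computing the Wedderburn decomposition of the twisted algebra at $[\beta^2]$ via Proposition~\ref{prop:partialSchurcover} and the It\^{o} theorems) has already been carried out in the preceding lemmas. The only subtle point is the characteristic-subgroup observation that forces $\psi([\beta^2]) = [\beta^2]$; without this, one would a priori have to check non-isomorphism of $\mathbb{C}^{\beta^2}G$ against $\mathbb{C}^\alpha H$ and $\mathbb{C}^{\alpha\beta^2}H$ as well, which the structure of $C_2\times C_4$ makes unnecessary.
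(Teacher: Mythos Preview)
Your proof is correct and follows exactly the same route as the paper's argument: observe that $[\beta^2]$ is the unique nontrivial square in $C_2\times C_4$, hence fixed by any isomorphism $\psi$, and then invoke Lemma~\ref{lem:betasquare} to derive the contradiction.
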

\begin{proof}
Assume that $\psi: M(G) \rightarrow M(H)$ is an isomorphism. As the squares in $M(G)$ and $M(H)$ form a characteristic subgroup, $\psi ([\beta^2]) = [\beta^2]$. Hence by Lemma~\ref{lem:betasquare} we can not have an isomorphism $\C^\gamma G \cong \C^{\psi(\gamma)} H$ for all $[\gamma] \in M(G)$. 
\end{proof}

In the rest of this section we will prove that the groups $G$ and $H$ satisfy the $\C$-bijectivity condition.
\begin{lemma}\label{lem:cohomologyoforder4}
	Let $[\gamma] \in\{[\beta] ,[\beta ^3], [\alpha \beta] , [\alpha \beta ^3]\}$ in $M(G)$ or alternatively $M(H)$. Then
	$$\C ^{\gamma}G\cong \C ^{\gamma}H\cong 6\C ^{4\times 4}\oplus 3\C ^{8\times 8}.$$
\end{lemma}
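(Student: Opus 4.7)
My plan is to mimic the strategy used in Lemma~\ref{lem:betasquare}, computing the twisted group algebras indirectly via the partial Schur cover construction of Proposition~\ref{prop:partialSchurcover}.

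First I would invoke Theorem~\ref{th:isomiffcoprime} to halve the workload: since all four classes in the list have order $4$ in $M(G)\cong C_2\times C_4$ and $\gcd(3,4)=1$, we get $\C^{\beta}G\cong \C^{\beta^3}G$ and $\C^{\alpha\beta}G\cong \C^{\alpha\beta^3}G$ (and the same for $H$), so only the two cases $\gamma\in\{\beta,\alpha\beta\}$ need to be treated. For each such $\gamma$ one has $\gamma^2=\beta^2$, so Proposition~\ref{prop:partialSchurcover} combined with Theorem~\ref{th:isomiffcoprime} yields
\[
\C G_\gamma\;\cong\;\C G\,\oplus\,2\,\C^\gamma G\,\oplus\,\C^{\beta^2}G,
\]
and analogously for $H$. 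Since Lemmas~\ref{lem:groupalgebra} and~\ref{lem:betasquare} already supply $\C G,\C H,\C^{\beta^2}G$ and $\C^{\beta^2}H$, the problem is reduced to computing the Wedderburn decomposition of the ordinary group algebras $\C G_\gamma$ and $\C H_\gamma$.

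Next I would describe each $G_\gamma$ explicitly as a central extension of $G$ by $\langle f\rangle\cong C_4$: for $\gamma=\beta$ one imposes only the extra relation $[c,d]=f$, while for $\gamma=\alpha\beta$ one additionally imposes $[d,e]=f^2$ (and similarly for $H_\gamma$). All four groups have order $1152$, retain $\langle a,b\rangle$ as normal abelian Sylow $3$-subgroup, and, one checks, contain the abelian subgroup $A=\langle a,b,c^2,d^2,f\rangle$ of index $8$ as a normal subgroup (the only nontrivial commutation being $[c^2,d^2]=f^4=1$). Hence Theorems~\ref{th:Isaacs1234} and~\ref{th:Isaacs615} force the character degrees to lie in $\{1,2,4,8\}$. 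Moreover $f\in G_\gamma'$ gives $G_\gamma/G_\gamma'\cong G/G'$ of order $16$, producing exactly $16$ linear characters; the analogous statement holds for $H_\gamma$.

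The final step would be to count conjugacy classes in $G_\gamma$ and $H_\gamma$. If this yields $114$ classes in $G_\beta$ and $G_{\alpha\beta}$ and $102$ classes in $H_\beta$ and $H_{\alpha\beta}$, then the three linear equations---total dimension $1152$, number of conjugacy classes, and number of linear characters---uniquely determine the multiplicities of the $\C^{2\times 2}$, $\C^{4\times 4}$ and $\C^{8\times 8}$ summands in $\C G_\gamma$ and $\C H_\gamma$. Subtracting the known pieces and dividing by $2$ then produces the claimed decomposition $6\,\C^{4\times 4}\oplus 3\,\C^{8\times 8}$ simultaneously for $\C^\gamma G$ and $\C^\gamma H$. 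The main obstacle, as in Lemma~\ref{lem:betasquare}, is precisely this conjugacy-class enumeration in the four central extensions of order $1152$: one has to keep careful track of how the new central relation $[c,d]=f$ (and, in the $\alpha\beta$ case, $[d,e]=f^2$) splits or merges $G$- and $H$-conjugacy classes in the cover.
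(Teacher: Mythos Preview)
Your approach is essentially the same as the paper's, and the setup is correct: the reduction via Theorem~\ref{th:isomiffcoprime}, the description of the central extensions $G_\gamma,H_\gamma$, the normality and abelianness of $\langle a,b,c^2,d^2,f\rangle$ of index $8$, and the conjugacy-class counts $114$ and $102$ are all right.

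There is, however, a genuine gap in the last step. In Lemma~\ref{lem:betasquare} the It\^o bound restricted the degrees to $\{1,2,4\}$, so the three linear constraints (dimension, class number, number of linear characters) determined three multiplicities. Here the index-$8$ abelian normal subgroup only gives degrees in $\{1,2,4,8\}$, so you have four unknowns and your three equations do \emph{not} pin down the decomposition of $\C G_\gamma$. Concretely, writing $\C^\gamma G = a\,\C^{2\times 2}\oplus b\,\C^{4\times 4}\oplus c\,\C^{8\times 8}$, your constraints become $a+4b+16c=72$ and $a+b+c=9$, which admit both the intended solution $(a,b,c)=(0,6,3)$ and the spurious one $(a,b,c)=(4,1,4)$; the latter is even compatible with the parity condition coming from $\C^\gamma G\cong \C^{\gamma^3}G$.

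The paper closes this gap with one extra ingredient you omitted: since $[\gamma]$ has order $4$, Proposition~\ref{prop:Higgs88}(1) forces $4\mid c_{G,\gamma}$, so every irreducible $\gamma$-representation has dimension divisible by $4$. This kills the degree-$2$ summands in $\C^\gamma G$ (i.e.\ forces $a=0$), after which the remaining $2\times 2$ linear system is determined and yields $6\,\C^{4\times 4}\oplus 3\,\C^{8\times 8}$. Once you add this observation your argument goes through exactly as in the paper.
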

\begin{proof}
	We will use Proposition~\ref{prop:partialSchurcover}. 
	By the construction of $G_{\omega}$, we get that 
	$$G_{\beta }\cong \left(\overset{a}C_3\times \overset{b}C_3\right)\rtimes \left((\overset{f}C_4\times \overset{c}C_4)\rtimes \overset{d}C_4)\rtimes \overset{e}C_2 \right),$$
	where $f$ is central and $[c,d]=f$ and all the other relations are the same as in $G$.
	Similarly,
	$$H_{\beta}\cong \left(\overset{a}C_3\times \overset{b}C_3\right)\rtimes \left((\overset{f}C_4\times \overset{c}C_4)\rtimes \overset{d}C_4)\rtimes \overset{e}C_2 \right),$$
	where $f$ is central and $[c,d]=f$ and all the other relations are the same as in $H$.
	To prove the lemma we can argue as in the proofs of Lemma~\ref{lem:groupalgebra} and Lemma~\ref{lem:betasquare} using that the degree of any $\beta$-representation is divisible by $4$, that $\langle a,b,c^2,d^2,f\rangle$ is an abelian normal subgroup in both $G$ and $H$ and that $\C^\beta G \cong \C^{\beta^3}G$ and $\C^\beta H \cong \C^{\beta^3} H$. Analogues arguments also handle the cohomology classes $[\alpha\beta]$ and $[\alpha\beta^3]$.
\end{proof}

We are left with the cohomology classes $[\alpha]$ and $[\alpha \beta^2]$ in both $M(G)$ and $M(H)$.
Using similar techniques to the above we can show the following. 
\begin{lemma}\label{lem:cohomologyoforder2}
	With the above notation
$$\C ^{\alpha}G\cong \C ^{\alpha \beta ^2}G \cong 16\C ^{2\times 2}\oplus 14 \C ^{4\times 4},$$
$$\C ^{\alpha}H \cong 16\C ^{2\times 2}\oplus 14 \C ^{4\times 4}$$
and
$$\C ^{\alpha \beta ^2}H \cong 32\C ^{2\times 2}\oplus 10 \C ^{4\times 4}.$$
\end{lemma}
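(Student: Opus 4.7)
The plan is to mirror the method of Lemmas~\ref{lem:betasquare} and \ref{lem:cohomologyoforder4}. Since both $[\alpha]$ and $[\alpha\beta^2]$ have order $2$, Proposition~\ref{prop:partialSchurcover} yields
\[
\C G_\alpha \cong \C G \oplus \C^\alpha G, \qquad \C G_{\alpha\beta^2} \cong \C G \oplus \C^{\alpha\beta^2}G,
\]
and the analogous statements for $H$. Hence it suffices to determine the Wedderburn decomposition of the ordinary complex group algebra of each of the four covers $G_\alpha$, $G_{\alpha\beta^2}$, $H_\alpha$, $H_{\alpha\beta^2}$, and then subtract the decomposition of $\C G$ (resp.\ $\C H$) provided by Lemma~\ref{lem:groupalgebra}.

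Each cover is obtained by adjoining a new central element $f$ of order $2$ lifting the commutator relations encoded by the cocycle. Thus $G_\alpha$ is presented by the defining relations of $G$ with the single modification $[d,e]=f$; the group $G_{\alpha\beta^2}$ additionally imposes $[c,d]=f$; and the two $H$-covers are built analogously. In each of the four covers, $\langle a,b\rangle$ remains a normal abelian Sylow $3$-subgroup, so Theorem~\ref{th:Isaacs1234} forces every irreducible complex character degree to be a power of $2$. Moreover each cover still admits a normal abelian subgroup of index $4$, namely $\langle a,b,c,d^2,f\rangle$ in the two $G$-covers and $\langle a,b,c^2,d^2,e,f\rangle$ in the two $H$-covers (whose abelianity one verifies directly from the presentations and the fact that $[c,d^2]=1$ holds even after imposing $[c,d]=f$ and similarly for $[d^2,e]$). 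Hence Theorem~\ref{th:Isaacs615} bounds every irreducible degree by $4$, so only $\C$, $\C^{2\times 2}$ and $\C^{4\times 4}$ can appear as Wedderburn components.

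In each cover the new generator $f$ lies in the derived subgroup, so the abelianization has order $16$, contributing exactly $16$ one-dimensional components. The remaining multiplicities of $\C^{2\times 2}$ and $\C^{4\times 4}$ are then pinned down by the total dimension $|G_\alpha|=576$ together with the number of conjugacy classes. I expect the class counts to yield
\[
\C G_\alpha \cong \C G_{\alpha\beta^2} \cong \C H_\alpha \cong 16\C \oplus 44\C^{2\times 2} \oplus 24\C^{4\times 4}
\]
and
\[
\C H_{\alpha\beta^2} \cong 16\C \oplus 60\C^{2\times 2} \oplus 20\C^{4\times 4},
\]
so that subtracting the decomposition from Lemma~\ref{lem:groupalgebra} produces the four expressions in the statement.

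The main obstacle is the conjugacy-class enumeration in the four covers: one must track carefully how the adjoined central involution $f$ merges or splits classes relative to $G$ and $H$, and in particular must see why $H_{\alpha\beta^2}$ ends up with strictly more conjugacy classes than the other three covers (reflecting that in $H$ both commutators $[c,d]$ and $[d,e]$ are trivial, so lifting them both to $f$ affects the class equation more dramatically than in the other cases, where at least one of the two commutators is already non-trivial). This is the only nontrivial computational step; the structural tools that reduce the problem to this count, namely Proposition~\ref{prop:partialSchurcover} and Ito's theorems, apply uniformly across all four covers.
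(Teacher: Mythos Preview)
Your approach is exactly the one the paper intends: it states only ``Using similar techniques to the above we can show the following'', and your proposal spells out precisely those techniques (Proposition~\ref{prop:partialSchurcover}, It\^{o}'s theorems, the abelian normal subgroup of index $4$, and a conjugacy class count in the covers). The expected class counts you list are correct and yield the stated decompositions after subtracting Lemma~\ref{lem:groupalgebra}.

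One caveat: your heuristic for why $H_{\alpha\beta^2}$ acquires more conjugacy classes than the other three covers is misleading. The Sylow $2$-subgroups of $G$ and $H$ are isomorphic, and in \emph{both} groups one has $[c,d]=[d,e]=1$, so the distinction cannot come from those commutators alone. The actual asymmetry lies in how $c$ and $e$ act on the Sylow $3$-subgroup: in $H$ the element $e$ centralises $\langle a,b\rangle$ while $c$ does not, whereas in $G$ it is $e$ that acts nontrivially. This is what makes the lift $[d,e]=f$ interact differently with the $3$-part in the two groups and accounts for the different class numbers; the heuristic you give does not capture this and should be replaced by the direct computation.
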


\begin{corollary}\label{cor:ConditionBsatisf}
$G$ and $H$ satisfy the $\C$-bijectivity condition.
\end{corollary}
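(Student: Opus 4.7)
The plan is to exhibit an explicit set bijection $\varphi: M(G)\rightarrow M(H)$ matching cohomology classes whose twisted group algebras agree, so that by definition the $\C$-bijectivity condition is verified. Since the $F$-bijectivity condition only requires a set bijection (not a group homomorphism), we have flexibility; the whole point of Corollary~\ref{cor:notinrelation} is precisely that no such bijection can simultaneously be an isomorphism of $M(G)$ with $M(H)$.

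First I would tabulate, for each of the eight classes $[\gamma]\in M(G)$ and $M(H)\cong C_2\times C_4=\langle\alpha\rangle\times\langle\beta\rangle$, the Wedderburn decomposition of $\C^\gamma G$ and $\C^\gamma H$, using Lemmas~\ref{lem:groupalgebra}, \ref{lem:betasquare}, \ref{lem:cohomologyoforder4}, and \ref{lem:cohomologyoforder2}. The results line up as follows. The trivial class yields in both groups the algebra of Lemma~\ref{lem:groupalgebra}. Each of the four order-$4$ classes $[\beta],[\beta^3],[\alpha\beta],[\alpha\beta^3]$ yields in both groups the same algebra $6\C^{4\times 4}\oplus 3\C^{8\times 8}$ by Lemma~\ref{lem:cohomologyoforder4}. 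For the three non-trivial squares of order $2$, Lemma~\ref{lem:cohomologyoforder2} together with Lemma~\ref{lem:betasquare} give, in $G$, the type $16\C^{2\times 2}\oplus 14\C^{4\times 4}$ at $[\alpha]$ and $[\alpha\beta^2]$ and the type $32\C^{2\times 2}\oplus 10\C^{4\times 4}$ at $[\beta^2]$; while in $H$ the same two types occur, but with $[\alpha]$ and $[\beta^2]$ giving the first and $[\alpha\beta^2]$ giving the second.

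Given this tabulation, I define $\varphi$ as the identity on the trivial class, on $[\alpha]$, and on each of the four order-$4$ classes, and I swap $[\beta^2]\leftrightarrow[\alpha\beta^2]$. This is clearly a set bijection of $M(G)$ onto $M(H)$, and by the table $\C^{\gamma}G\cong\C^{\varphi(\gamma)}H$ holds for all $[\gamma]\in M(G)$, which is exactly the $\C$-bijectivity condition.

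The proof is essentially bookkeeping; the only conceptual point is that the swap of $[\beta^2]$ and $[\alpha\beta^2]$ is forced by the computations in Lemma~\ref{lem:betasquare} and Lemma~\ref{lem:cohomologyoforder2}, and this swap cannot be realised by any group automorphism because $[\beta^2]$ is the unique non-trivial square in $C_2\times C_4$ and hence must be preserved under any group isomorphism of Schur multipliers, which is precisely how Corollary~\ref{cor:notinrelation} obstructs $G\sim_\C H$. There is no genuine obstacle in the present corollary beyond carefully reading off the isomorphism types from the preceding lemmas.
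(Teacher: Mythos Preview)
Your proposal is correct and is essentially identical to the paper's own proof: the paper defines the very same bijection $\varphi$ that is the identity on $\{1,[\alpha],[\beta],[\beta^3],[\alpha\beta],[\alpha\beta^3]\}$ and swaps $[\beta^2]\leftrightarrow[\alpha\beta^2]$, and then appeals to Lemmas~\ref{lem:groupalgebra}--\ref{lem:cohomologyoforder2}. Your additional remark explaining why this swap cannot be a group automorphism is a nice gloss but is not part of the paper's proof of this corollary (it is the content of Corollary~\ref{cor:notinrelation}).
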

\begin{proof}
We can take $\varphi :M(G)\rightarrow M(H)$ to be determined by
\begin{enumerate}
	\item $\varphi ([\gamma])=[\gamma]$ for $[\gamma]\in \{1, [\alpha], [\beta], [\beta ^3] , [\alpha \beta], [\alpha \beta ^3]\}$.
	\item $\varphi ([\beta ^2]) = [\alpha \beta ^2]$.
	\item $\varphi ([\alpha \beta ^2])= [\beta ^2]$.
\end{enumerate}
%Then, the result follows from Lemma~\ref{lem:groupalgebra}, Lemma~\ref{lem:betasquare}, Lemma~\ref{lem:cohomologyoforder4} and Lemma~\ref{lem:cohomologyoforder2}.
Then, the result follows from Lemmas~\ref{lem:groupalgebra}-\ref{lem:cohomologyoforder2}.
\end{proof}

Notice now that Theorem 4.(2) follows from Corollary~\ref{cor:SchurC4XC2}, Corollary~\ref{cor:notinrelation} and Corollary~\ref{cor:ConditionBsatisf}.

\section{Structure theorem for twisted group algebras of central extensions}\label{StructureTheorem}
The aim of this section is to generalize a result \cite[Theorem 3.2.9]{KarpilovskyProjective} which describes the decomposition of a group algebra of a group $S$ into a direct sum of twisted group algebras of a quotient of $S$ by a central subgroup. Note that it is known that $F^\alpha G$ only maps naturally to $F^\beta G/N$, for $N$ some normal subgroup of $G$ and $[\beta] \in H^2(G/N,F^*)$, if $[\alpha]$ is in the image of the inflation map from $H^2(G/N,F^*)$ to $H^2(G,F^*)$ \cite[Lemma 3.2.12]{KarpilovskyProjective}. %This implies that we can not expect a strong generalization of the theorem obtained in this section.

We first introduce some notation.
Let
\begin{equation}\label{eq:CentrExt}
 1\rightarrow A \rightarrow S \rightarrow G \rightarrow 1
\end{equation}
be a central extension.
Let $\alpha'\in Z^2(G,F^*)$ and let 
$\alpha \in Z^2(S,F^*)$ such that $\alpha$ is inflated from $\alpha'$.
Now, fix a 2-cocycle $\sigma \in Z^2(G,A)$ such that $[\sigma] \in H^2(G,A)$ corresponds to the central extension in \eqref{eq:CentrExt}. Note that in this way we also fix a section $G \rightarrow S$. Define for any  field $K$ containing $F$ and $\chi \in \Hom(A,K^*)$ the following function 
$$\alpha_{\chi}:G\times G \rightarrow K^*, \ \alpha_{\chi} (g,h)=\alpha'(g,h)\chi (\sigma (g,h)).$$

\begin{lemma}
With the above notation $\alpha_{\chi}\in Z^2(G,K^*)$.
\end{lemma}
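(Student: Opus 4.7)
The plan is to verify the cocycle identity directly by recognizing $\alpha_\chi$ as a pointwise product of two $2$-cocycles in $Z^2(G,K^*)$. Recall that $Z^2(G,K^*)$ is an abelian group under pointwise multiplication, so it suffices to prove that each of the two factors $\alpha'$ and $\chi \circ \sigma$ lies in $Z^2(G,K^*)$.

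For the first factor, since $F \subseteq K$ we have an inclusion $F^* \hookrightarrow K^*$ which induces an inclusion $Z^2(G,F^*) \hookrightarrow Z^2(G,K^*)$, so $\alpha' \in Z^2(G,K^*)$ automatically.

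For the second factor, I would simply apply $\chi$ to the cocycle identity satisfied by $\sigma \in Z^2(G,A)$, namely
\[ \sigma(g,h)\,\sigma(gh,k) = \sigma(g,hk)\,\sigma(h,k) \quad \text{for all } g,h,k \in G. \]
Since $\chi: A \to K^*$ is a group homomorphism, it preserves products, and hence the image $\chi \circ \sigma : G \times G \to K^*$ satisfies the same multiplicative identity, proving $\chi \circ \sigma \in Z^2(G,K^*)$.

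Putting these two observations together, $\alpha_\chi = \alpha' \cdot (\chi \circ \sigma)$ is a product of two elements of the abelian group $Z^2(G,K^*)$, hence belongs to $Z^2(G,K^*)$. There is no genuine obstacle here; the statement is essentially bookkeeping, reflecting the standard fact that inflation and pushforward along a group homomorphism of coefficient groups both commute with taking cocycles.
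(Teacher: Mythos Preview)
Your proof is correct and follows essentially the same approach as the paper, which simply states that the cocycle condition is straightforward to verify using that $\alpha'$ and $\sigma$ are $2$-cocycles and $\chi$ is a group homomorphism. You have merely made explicit the factorization into $\alpha'$ and $\chi\circ\sigma$ that the paper leaves implicit.
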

\begin{proof}
It is straightforward to verify the 2-cocyle condition for $\alpha_\chi$ using that $\alpha'$ and $\sigma$ are 2-cocycles and $\chi$ a group homomorphism from $A$ to $K^*$.
\end{proof}

Denote by $F(\chi)$ the smallest field extension of $F$ which contains all the values of $\chi$. Note that then $\alpha_\chi$ has only values in $F(\chi)^*$, so $\alpha_\chi \in Z^2(G,F(\chi)^*)$. We view $G$ as a subset of $S$ via the section given by $\sigma$.

\begin{proposition}\label{prop:SurjectiveRingHomom}
 Let $\{u_s\}_{s\in S}$ be a basis of $F^{\alpha}S$ corresponding to $\alpha$ and 
$\{v_g\}_{g\in G}$ a basis of $F(\chi)^{\alpha_{\chi}}G$ corresponding to $\alpha_{\chi}$. Define 
$$\psi_\chi :F^{\alpha}S\rightarrow F(\chi)^{\alpha_{\chi}}G, \ \ k u_{ag} \mapsto k \chi (a)v_g $$
for any $k \in F$, $a \in A$ and $g \in G$ and by extending $\psi_\chi$ linearly to all of $F^\alpha S$.
Then $\psi_\chi$ is a surjective ring homomorphism.
\end{proposition}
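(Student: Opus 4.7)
My plan is to verify the three required properties --- well-definedness, multiplicativity, and surjectivity --- in that order, after fixing notation for the set-theoretic section $G \to S$ determined by $\sigma$. Using this section, every $s \in S$ decomposes uniquely as $s = ag$ with $a \in A$ and $g$ lying in the transversal identified with $G$, so the assignment $u_{ag} \mapsto \chi(a) v_g$ unambiguously extends by $F$-linearity to a well-defined $F$-linear map $\psi_\chi$.

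For multiplicativity, I would evaluate $\psi_\chi$ on a product of basis elements $u_{ag}$ and $u_{bh}$. In $F^\alpha S$ we have $u_{ag} u_{bh} = \alpha(ag,bh)\, u_{ag\cdot bh}$, and since $A$ is central the product in $S$ satisfies $ag \cdot bh = ab\,\sigma(g,h)\cdot (gh)$, where the last factor denotes the image in $S$ of the product computed in $G$. Applying $\psi_\chi$ yields $\alpha(ag,bh)\chi(ab)\chi(\sigma(g,h))\, v_{gh}$. On the other hand, $\psi_\chi(u_{ag})\psi_\chi(u_{bh}) = \chi(a)\chi(b)\,v_g v_h = \chi(a)\chi(b)\alpha_\chi(g,h)\, v_{gh}$, and unfolding the definition $\alpha_\chi(g,h) = \alpha'(g,h)\chi(\sigma(g,h))$ shows that the two sides agree provided $\chi(ab) = \chi(a)\chi(b)$ and $\alpha(ag,bh) = \alpha'(g,h)$. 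The first is just $\chi$ being a homomorphism on $A$; the second is precisely the content of $\alpha$ being inflated from $\alpha'$ along $S \to G$, together with the fact that $a,b \in A$ lie in the kernel.

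For surjectivity, the image of $\psi_\chi$ contains $\chi(a) v_g = \psi_\chi(u_{ag})$ for every $a \in A$ and $g \in G$, as well as $k v_g = \psi_\chi(k u_g)$ for every $k \in F$; hence it contains $F[\chi(A)] \cdot v_g$ for each $g$. The key small observation is that $F[\chi(A)] = F(\chi)$: since $A$ is finite, $\chi(A)$ is a finite multiplicative subgroup of $K^*$ consisting of roots of unity, each algebraic over $F$, so the finitely generated $F$-subalgebra they span is already a field, which by construction equals $F(\chi)$. Consequently the image exhausts the full $F(\chi)$-span of $\{v_g\}_{g \in G}$, that is, all of $F(\chi)^{\alpha_\chi}G$. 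I expect the only point that requires genuine care to be the bookkeeping around the section --- carefully distinguishing the product of two transversal elements computed in $S$, namely $\sigma(g,h)\cdot(gh)$, from their product $gh$ taken in $G$ --- since once this is set up cleanly the rest is a direct unwinding of the definitions of inflation and of $\alpha_\chi$.
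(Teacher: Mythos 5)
Your proposal is correct and follows essentially the same route as the paper: the same computation $ag\cdot bh = ab\,\sigma(g,h)\,(gh)$ combined with the inflation property $\alpha(ag,bh)=\alpha'(g,h)$ and the definition of $\alpha_\chi$ for multiplicativity, and the same observation for surjectivity (which you usefully make explicit via $F[\chi(A)]=F(\chi)$, a point the paper only asserts as clear). No gaps.
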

\begin{proof}
We set $\psi = \psi_\chi$. Since $F$ is central in $F^{\alpha}S$ and $F(\chi)$ is central in $F(\chi)^{\alpha_{\chi}}G$ it is sufficient to prove that
$$\psi (u_{a_1g_1}u_{a_2g_2})=\psi (u_{a_1g_1})\psi (u_{a_2g_2})$$
 for any $g_1, g_2 \in G$ and $a_1, a_2 \in A$.
Notice that
$$u_{a_1g_1}u_{a_2g_2}=\alpha (a_1g_1,a_2g_2)u_{a_1g_1a_2g_2}=\alpha (a_1g_1,a_2g_2)u_{a_1a_2\sigma(g_1,g_2)g_1g_2}.$$ 
Therefore,
$$\psi (u_{a_1g_1}u_{a_2g_2})=\alpha (a_1g_1,a_2g_2)\chi (a_1a_2\sigma(g_1,g_2))v_{g_1g_2}.$$
Using the fact that $\alpha$ is inflated from $\alpha'$ and the definition of $\alpha_{\chi}$ we get that 
$$\psi (u_{a_1g_1}u_{a_2g_2})=\alpha' (g_1,g_2)\chi (a_1a_2\sigma(g_1,g_2))v_{g_1g_2}=\chi (a_1a_2)\alpha_{\chi}(g_1,g_2)v_{g_1g_2}.$$
On the other hand,
$$\psi (u_{a_1g_1})\psi (u_{a_2g_2})=\chi(a_1)v_{g_1}\chi(a_2)v_{g_2}=\chi (a_1a_2)v_{g_1}v_{g_2}=\chi (a_1a_2)\alpha_{\chi}(g_1,g_2)v_{g_1g_2}.$$	
We proved that $\psi$ is indeed a homomorphism. The surjectivity of $\psi $ is clear since $v_g$ is in the image of $\psi$ for any $g\in G$ and from the definition of $F(\chi)$.
\end{proof}

We now proceed to generalize \cite[Theorem 3.2.9]{KarpilovskyProjective} for cohomology classes of $S$ which are inflated from cohomology classes of $G$. 
Assume that the characteristic of $F$ does not divide the order of $A$, let $K$ be a field containing $F$ and a primitive $\exp(A)$-th root of unity. Denote by $\mathcal{M}$ the set of simple $FA$-modules up to isomorphism. Note that as $A$ is abelian we can identify $\Hom(A,K^*)$ with the irreducible $K$-characters of $A$. 
Let $\Irr(A, F)$ be a subset of $K$-characters of $A$ such that it contains for each $M \in \mathcal{M}$ one character which corresponds to a composition factor of $K \otimes_F M$. In particular, $|\mathcal{M}| = |\Irr(A,F)|$. 

\begin{theorem}\label{th:NEWINFSTUFF}
Let $1 \rightarrow A \rightarrow S \rightarrow G \rightarrow 1$ be a central extension of finite groups, $F$ a field of characteristic not dividing the order of $A$ and $[\alpha] \in H^2(S,F^*)$ a cohomology class inflated from an element in $H^2(G,F^*)$. Then 
\begin{align*}
F^{\alpha}S\cong \bigoplus _{\chi \in \Irr(A,F)}F(\chi)^{\alpha_{\chi}}G.
\end{align*}	
\end{theorem}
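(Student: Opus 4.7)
My plan is to use the surjective ring homomorphisms $\psi_\chi$ from Proposition~\ref{prop:SurjectiveRingHomom}, one for each $\chi \in \Irr(A,F)$, and to bundle them into a single $F$-algebra homomorphism
\[ \Psi \colon F^{\alpha}S \longrightarrow \bigoplus_{\chi \in \Irr(A,F)} F(\chi)^{\alpha_\chi} G, \qquad x \mapsto (\psi_\chi(x))_\chi. \]
Since every component is already surjective, it will suffice to prove that $\Psi$ is injective: a dimension count over $F$ then forces $\Psi$ to be an isomorphism. We have $\dim_F F^{\alpha}S = |A|\cdot|G|$, while
\[ \dim_F \bigoplus_{\chi \in \Irr(A,F)} F(\chi)^{\alpha_\chi} G = |G|\sum_{\chi \in \Irr(A,F)} [F(\chi):F] = |G|\cdot |A|, \]
the last equality using that $\Irr(A,F)$ parameterises the $\Gal(K/F)$-orbits on $\Hom(A,K^{*})$ for a splitting field $K \supseteq F$ of $A$ and that each orbit through $\chi$ has cardinality $[F(\chi):F]$.

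For injectivity I would write an arbitrary element of $F^{\alpha}S$ in the form $x = \sum_{a \in A,\,g \in G} \lambda_{ag}\, u_{ag}$ with $\lambda_{ag} \in F$, using the section $G \hookrightarrow S$ determined by $\sigma$. The defining formula for $\psi_\chi$ gives
\[ \psi_\chi(x) = \sum_{g \in G} \Bigl( \sum_{a \in A} \lambda_{ag}\, \chi(a) \Bigr) v_g, \]
so $\Psi(x) = 0$ exactly when $\sum_{a \in A} \lambda_{ag}\, \chi(a) = 0$ for every $g \in G$ and every $\chi \in \Irr(A,F)$. Because the $\lambda_{ag}$ lie in $F$, applying any $\tau \in \Gal(K/F)$ transforms the relation for $\chi$ into the corresponding relation for $\tau \circ \chi$, so the vanishing propagates automatically to \emph{all} characters $\chi \in \Hom(A,K^{*})$. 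The $K$-linear independence of the irreducible characters of $A$ then forces $\lambda_{ag} = 0$ for every pair $(a,g)$, and hence $x = 0$.

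The only non-routine ingredient is this Fourier/orthogonality step together with the bookkeeping that turns a vanishing indexed by orbit representatives into a vanishing on every $K$-character; the hypothesis that $\operatorname{char}(F)$ does not divide $|A|$ enters here through the semisimplicity of $KA$. A parallel, perhaps more structural, route would be to normalise $\alpha$ so that its inflation makes $FA$ a \emph{central} subalgebra of $F^{\alpha}S$, decompose $FA = \bigoplus_{\chi \in \Irr(A,F)} FA\, e_\chi$ into primitive central idempotents with $FA\,e_\chi \cong F(\chi)$, and observe that $F^{\alpha}S\, e_\chi$ is free of rank $|G|$ over $F(\chi)$ with basis $\{e_\chi u_g\}_{g \in G}$ obeying exactly the multiplication rule of $F(\chi)^{\alpha_\chi} G$. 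I expect the idempotent viewpoint to be the cleanest presentation, but conceptually it is the same computation as the injectivity argument, repackaged through the Fourier decomposition of $FA$.
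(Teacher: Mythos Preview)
Your proposal is correct and follows essentially the same route as the paper: bundle the $\psi_\chi$ into a single map, note the dimension equality, and prove injectivity by reducing to the vanishing of each $FA$-coefficient $a_g$. The only cosmetic difference is that the paper phrases the injectivity step in module language (``$\chi(a_g)=0$ for all $\chi\in\Irr(A,F)$ means $a_g$ annihilates every simple $FA$-module, hence $a_g=0$ by semisimplicity''), whereas you spell out the Galois-orbit propagation to all $K$-characters and invoke linear independence---these are two formulations of the same orthogonality argument.
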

\begin{proof}
Define
\[\psi: F^\alpha S \rightarrow \bigoplus _{\chi \in \Irr(A,F)}F(\chi)^{\alpha_{\chi}}G, \ \ x \mapsto (\psi_\chi(x))_{\chi \in \Irr(A,F)} \]
where $\psi_\chi$ is defined as in Proposition~\ref{prop:SurjectiveRingHomom} for every $\chi \in \Irr(A,F)$ and $(\psi_\chi(x))_{\chi \in \Irr(A,F)}$ denotes a tuple with entries indexed by the elements of $\Irr(A,F)$. By Proposition~\ref{prop:SurjectiveRingHomom} the map $\psi$ is a surjective ring homomorphism on every direct summand and as the dimensions of $F^{\alpha}S$ and  $\bigoplus_{\chi \in \Irr(A,F)}F(\chi)^{\alpha_{\chi}}G$ are equal, we are done once we show that $\psi$ is injective.

We view $G$ embedded in $S$, as a transversal of the cosets of $A$. Then a general element $x \in F^\alpha S$ can be written as $x = \sum_{g \in G}a_g g$ where $a_g \in FA$. Fix some $\chi \in \Irr(A,F)$. Then, by the definition of $\psi_\chi$, the element $x$ lies in the kernel of $\psi_\chi$ if and only if $\chi(a_g) = 0$ for all $g \in G$. Hence $x$ lies in the in the kernel of $\psi$ if and only if $\chi(a_g) = 0$ for all $g \in G$ and all $\chi \in \Irr(A,F)$. By our assumption on $F$ the algebra $FA$ is semisimple, so $FA$ is isomorphic to the direct sum of the elements in $\mathcal{M}$, the simple $FA$-modules. Thus by the definition of $\Irr(A,F)$ we have, for a fixed $g \in G$, that $\chi(a_g) = 0$ for all $\chi \in \Irr(A,F)$ if and only if $a_g$ annihilates all simple $FA$-modules, i.e. $a_g = 0$. Hence $\chi(a_g) = 0$ for all all $g \in G$ and $\chi \in \Irr(A,F)$ if and only if $a_g = 0$ for all $g \in G$, i.e. $x = 0$. Consequently $\ker(\psi) = 0$ and $\psi$ is injective.  
\end{proof}

 \section{Twisted group algebras of $p$-groups}\label{Rational}
In this section we study the twisted group algebras of $p$-groups starting with elementary abelian groups of rank $2$. We then proceed to prove Theorems 1 and 2.

Throughout this section $p$ denotes a prime and $F$ a field of characteristic different from $p$. Moreover $\zeta$ denotes a primitive $p$-th root of unity in $F$ or an extension of $F$. Let $[F(\zeta):F] = \frac{p-1}{d}$ for some divisor $d$ of $p-1$ and let $\zeta_1\ldots,\zeta_d$ be representatives of the orbits of primitive $p$-th roots of unity in $F(\zeta)$ under the action of $\Aut(F(\zeta)/F)$. Note that $F(\zeta_i) = F(\zeta_j)$ for any $1 \leq i,j \leq d$. We will use Theorem~\ref{th:UCT} throughout this section without mentioning it explicitly.

\subsection{Twisted group algebras of elementary abelian groups of rank 2}\label{sec:CpxCp}
We start by investigating the twisted group algebras of elementary abelian groups of rank $2$. This example will turn out to be crucial in the sequel. 

Let $G = \langle a \rangle \times \langle b \rangle \cong  C_p \times C_p$. 
The Schur multiplier of $G$ is isomorphic to $C_p$, a generator being explained by the relation $[u_a,u_b] = \zeta$. Therefore, if $\zeta \in F$, then $\Hom(M(G), F^*) \cong C_p$ and $\Hom(M(G), F^*) = 1$ otherwise. Moreover, $\Ext(G/G', F^*) = \Ext(G, F^*)$, so by Section~\ref{sec:H2} a generic cohomology class $[\alpha] \in H^2(G, F^*)$ is determined by parameters $\lambda, \mu \in F^*$ and an integer $0 \leq \ell \leq p-1$ with corresponding relations in the twisted group algebra given by 
$$u_a^p = \lambda, \ u_b^p = \mu, \ [u_a, u_b] = \zeta^\ell $$
where $\ell = 0$ if $\zeta \notin F$.

Assuming $\ell \neq 0$, in particular $\zeta \in F$, the relations in $F^\alpha G$ are exactly the defining relations of the symbol algebra $(\lambda, \mu;F;\zeta^\ell)_p$. Hence, $F^\alpha G$ is a central simple $F$-algebra which is by Lemma~\ref{lem:SymbolAlgebras} isomorphic to a matrix algebra over $F$ if and only if $\mu \in \Nr(F(\sqrt[p]{\lambda})/F)$. 

Next assume $\ell = 0$. We will distinguish several cases in the description of $F^\alpha G$.
\begin{enumerate}
\item Assume $\lambda, \mu \in (F^*)^p$, i.e. the polynomials $X^p-\lambda$ and $X^p -\mu$ have roots in $F[X]$. Then $[\alpha]$ is the neutral element, as it corresponds to the trivial homomorphism in $\Hom(G/G', F^*/(F^*)^p)$. So $F^\alpha G \cong FG$. Now $FG$ has a trivial module $F$ and non-trivial modules all of which are isomorphic to $F(\zeta)$ as a field. There are $p+1$ possible kernels for a non-trivial module as this is the number of non-trivial subgroups of $G$. Moreover for a fixed kernel, say $\langle b \rangle$, there are $d$ non-equivalent representations of $FG$ on $F(\zeta)$ given by sending $a$ to $\zeta_i$ for $1 \leq i \leq d$. Hence
$$F^\alpha G \cong FG \cong F \oplus (p+1)dF(\zeta). $$
\item Assume $\lambda \notin (F^*)^p$ and $\mu \in (F^*)^p$. Then $[F(\sqrt[p]{\lambda}):F] = p$, as $X^p-\lambda$ is irreducible in $F[X]$ if it has no roots \cite[Theorem 427]{Redei}.  $F(\sqrt[p]{\lambda}, \zeta)$ is a field of dimension $\frac{p(p-1)}{d}$ over $F$. The following are pairwise non-equivalent projective irreducible $\alpha$-representations of $G$:
\begin{align*}
\eta &: G \rightarrow F(\sqrt[p]{\lambda}), \ x \mapsto \sqrt[p]{\lambda}, \ y \mapsto \sqrt[p]{\mu} \\
\eta_i &: G \rightarrow F(\sqrt[p]{\lambda}, \zeta), \ x \mapsto \sqrt[p]{\lambda}, \ y \mapsto \zeta_i\sqrt[p]{\mu}, \ \text{for} \ 1 \leq i \leq d. 
\end{align*}
Note that $[F(\sqrt[p]{\lambda}) : F] + d [F(\sqrt[p]{\lambda}, \zeta): F] = p + d\left(\frac{p(p-1)}{d}\right) = p^2 = |G|$. 
So $F^\alpha G \cong F(\sqrt[p]{\lambda}) \oplus dF(\sqrt[p]{\lambda}, \zeta)$.
\item Assume $\lambda, \mu \notin (F^*)^p$ and that $F(\sqrt[p]{\lambda}) \neq F(\sqrt[p]{\mu})$. Note, that if $F(\sqrt[p]{\lambda}) = F(\sqrt[p]{\mu})$ then after a base change in $F^\alpha G$ we can assume that we are in the previous case. The following is a projective irreducible $\alpha$-representation of $G$: 
$$\eta: G \rightarrow F(\sqrt[p]{\lambda}, \sqrt[p]{\mu}), \ x \mapsto \sqrt[p]{\lambda}, \ y \mapsto \sqrt[p]{\mu}.$$ 
Note that the dimension of $F(\sqrt[p]{\lambda}, \sqrt[p]{\mu})$ over $F$ is $p^2$. So $F^\alpha G \cong F(\sqrt[p]{\lambda}, \sqrt[p]{\mu})$.
\end{enumerate}

 \subsection{Non-abelian groups of order $p^3$}\label{sec:p^3} 
 In this section we prove Theorem 1. Throughout this section $p$ is an odd prime. 
 Abusing notation, we denote the two non-abelian groups of order $p^3$ by 
 $$G = \langle b \rangle  \rtimes \langle a \rangle, \ \ a^p = b^{p^2} = 1, \ b^p =: c, \ b^{a} = bc, \ c \in Z(G) $$
and 
 $$H = (\langle c \rangle \times \langle b \rangle ) \rtimes \langle a \rangle, \ \ a^p = b^p = c^p = 1, \ b^{a} = bc, \ c \in Z(H). $$
 In both groups the center is cyclic of order $p$ generated by $c$ and the quotient by the derived subgroup is elementary abelian of order $p^2$ generated by the images of $a$ and $b$. We denote by $\bar{a}$ and $\bar{b}$ the images of $a$ and $b$ in $G/G' = G/Z(G)$ and $H/H'=H/Z(H)$. We will study the twisted group algebras of $G$ and $H$ over $F$ starting with cohomology classes in the images of $\Ext(G/G', F^*)$ and $\Ext(H/H', F^*)$ under the inflation map respectively.

By \cite[Theorem 4.7.3]{KarpilovskyProjective} we have $M(G) = 1$ and $M(H) \cong C_p \times C_p$ where generators of $M(H)$ are given by 
\begin{align*}
 [\beta_a]&:\quad [u_c,u_a]=\zeta, \ [u_c,u_b]=1,\ [u_b,u_a]=u_z, \\
 [\beta_b]&:\quad [u_c,u_a]=1,\ [u_c,u_b]=\zeta,\ [u_b,u_a]=u_z.
\end{align*}

First let $[\alpha] \in H^2(G, F^*)$ be such that $\alpha$ is symmetric and given by $u_a^p = \lambda$, $u_b^p = \mu u_c$. Then $[\alpha]$ is inflated from $[\gamma] \in H^2(G/\langle c \rangle, F^*)$ which is given by $u_{\bar{a}}^p = \lambda$, $u_{\bar{b}}^p = \mu $, $[u_{\bar{a}}, u_{\bar{b}}] = 1$. By Theorem~\ref{th:NEWINFSTUFF} we have
$$F^\alpha G \cong \bigoplus_{\chi \in \Irr(\langle c \rangle, F)} F(\chi)^{\alpha_\chi} (G/ \langle c \rangle).$$
Now $\chi \in \Irr(\langle c \rangle, F)$ is determined by $\chi(c) = \zeta^\ell$ for some $0 \leq \ell \leq p-1$ and $|\Irr( \langle c \rangle, F)| = 1 + d$. For $\ell = 0$ we have $F(\chi)^{\alpha_\chi} (G/\langle c \rangle) = F^\gamma (G/\langle c \rangle)$ which is a direct sum of fields described in Section~\ref{sec:CpxCp}. If $\ell \neq 0$, then $[\alpha_\chi]$ can be described by relations $u_{\bar{a}}^p = \lambda$, $u_{\bar{b}}^p = \mu \zeta^\ell$, $[u_{\bar{a}}, u_{\bar{b}}] = \zeta^\ell$. The corresponding direct summand $F(\chi)^{\alpha_\chi}G$ is then the central simple $F(\zeta)$-algebra $(\lambda, \mu \zeta^\ell; F(\zeta); \zeta^\ell)_p$ as described in Section~\ref{sec:Algebras}. Note that $M(G)$ is trivial and hence by Section~\ref{sec:H2} any cohomology class in $H^2(G, F^*)$ contains a symmetric cocycle. Thus we have described all twisted group algebras of $G$ over $F$.
  
Next we describe all twisted group algebras of the Heisenberg group $H$. By the description of $M(H)$ above and Section~\ref{sec:H2} we can write any $[\alpha] \in H^2(H, F^*)$ as $[\alpha' \beta_a^i \beta_b^j]$ where $0 \leq i,j \leq p-1$ and $\alpha'$ is a symmetric cocycle explained by relations $u_a^p = \lambda$, $u_b^p = \mu$. There are essentially two cases which we will handle separately.

\textbf{Case 1:} $i = j = 0$, i.e. $[\alpha]$ contains a symmetric cocycle.

This case is similar to the above description of the twisted group algebra of $G$. More precisely, $[\alpha]$ is inflated from $[\gamma] \in H^2(H/\langle c \rangle, F^*)$ with $[\gamma]$ explained by relations $u_{\bar{a}}^p = \lambda$, $u_{\bar{b}}^p = \mu $, $[u_{\bar{a}}, u_{\bar{b}}] = 1$. Note that this is exactly the $[\gamma]$ from the description of twisted group algebras for $G$. Again by Theorem~\ref{th:NEWINFSTUFF} we have
$$F^\alpha H \cong \bigoplus_{\chi \in \Irr(\langle c \rangle, F)} F(\chi)^{\alpha_\chi} (H/ \langle c \rangle).$$
For $\chi(c) = 1$ we get the summand $F^\gamma (H/\langle c \rangle)$ which is isomorphic to $F^\gamma (G/\langle c \rangle)$. For $\chi(c) = \zeta^\ell$ we get the central simple $F(\zeta)$-algebra $(\lambda, \mu; F(\zeta); \zeta^\ell)_p$. Notice that the parameters of this algebra are not the same as in the description of twisted group algebras of $G$.  

\textbf{Case 2:} $(i, j) \neq (0,0)$.

Assume first $i \neq 0$. Let $k$ be an integer such that $ki + j \equiv 0 \bmod p$ and define in $F^\alpha H$ the element $v=u_{a^k}u_b$. Then a straightforward computation shows that $[u_c,v]=1$ and $v^p=\delta \in F^*$ for $\delta = \lambda^k \mu$. Next observe that $[v,u_a]=u_c$. So $F^{\alpha}H$ is generated by the elements $u_c,v,u_a$ with relations given by
$$u_c^p=1, v^p=\delta, u_a^p=\lambda, [v,u_a]=u_c, [v,u_c]=1, [u_c,u_a]=\zeta^i.$$
Now, notice that $e=\frac{1}{p}(1+u_c+\ldots +u_c^{p-1})$ is an idempotent, and the set 
$$e,e^{u_a}, \ldots, e^{u_a^{p-1}}$$ 
is a set of orthogonal idempotents such that their sum is $1$. Then, by Lemma~\ref{lem:PassmanMatrices} we have
$F^{\alpha}H \cong M_p(e(F^{\alpha}H)e)$.
Noticing that $eu_ce=e$ and $eu_ae=0$ we get that $e(F^{\alpha}H)e\cong \langle ve|(ve)^p=\delta \rangle$. Notice that $\langle ve \rangle$ is not only a subalgebra by dimension considerations. 
Hence 
$$F^{\alpha}H \cong \left\{ \begin{array}{cc} M_p(F(\sqrt[p]{\delta})),& \text{if} \ \ \sqrt[p]{\delta} \notin F \\  p M_p(F),& \text{otherwise} \end{array}\right.. $$
In case $i = 0$ we can make an analogues calculation interchanging $a$ and $b$ (and choosing $k=0$) so that  we then obtain
$$F^{\alpha}H \cong \left\{ \begin{array}{cc} M_p(F(\sqrt[p]{\lambda})),&  \text{if} \ \ \sqrt[p]{\lambda} \notin F \\  p M_p(F),& \text{otherwise} \end{array}\right. $$
We note that in this case all components are matrix algebras over fields.\\

\textit{Proof of Theorem 1:}
We show that $G \sim_F H$ if and only if $\zeta \notin F$ and $\zeta \in \Nr(F(\sqrt[p]{\lambda}, \zeta)/F(\zeta))$ for all $\lambda \in F^*$.
Note that $\zeta \in F$ if and only if there is an $[\alpha] \in H^2(H, F^*)$ such that $F^\alpha H$ has only non-commutative components by Proposition~\ref{prop:SalatimTheorem}. On the other hand $F^\beta G$ has a commutative component for every $[\beta] \in H^2(G, F^*)$. Thus $G \sim_F H$ implies $\zeta \notin F$.

So assume $\zeta \notin F$ from now on. Moreover, assume first that $\zeta \in \Nr(F(\sqrt[p]{\lambda}, \zeta)/F(\zeta))$ for all $\lambda \in F^*$. Let $\psi: H^2(H, F^*) \rightarrow H^2(G, F^*)$ be defined by sending an element $[\alpha]$ defined by the relations $u_a^p = \lambda$, $u_b^p = \mu$ to the element defined by $u_a^p = \lambda$, $u_b^p = \mu u_c$. Then by the above 
\[F^\alpha H \cong F^\gamma (C_p \times C_p) \oplus d (\lambda, \mu)_p\]
 and 
 \[F^{\psi(\alpha)} G \cong F^\gamma (C_p \times C_p) \oplus d (\lambda, \mu \zeta)_p\]
  for a certain $[\gamma] \in H^2(C_p \times C_p, F^*)$ and where the symbol algebras are defined over $F(\zeta)$. Now by Lemma~\ref{lem:SymbolAlgebras} we have $(\lambda, \mu)_p \cong (\lambda, \mu \zeta)_p$ if and only if $\zeta \in \Nr(F(\sqrt[p]{\lambda}, \zeta)/F(\zeta))$. So indeed $G \sim_F H$ via $\psi$.

Finally assume there is $\lambda \in F$ such that $\zeta \notin \Nr(F(\sqrt[p]{\lambda}, \zeta)/F(\zeta))$. Then also $\sqrt[p]{\lambda} \notin F(\zeta)$. 
Define $[\beta] \in H^2(G, F^*)$ via the relations $u_a^p = \lambda$, $u_b^p = 1$. So by the above 
$$F^\beta G \cong F(\sqrt[p]{\lambda}) \oplus d F(\sqrt[p]{\lambda}, \zeta) \oplus d (\lambda, \zeta)_p. $$
By Lemma~\ref{lem:SymbolAlgebras} the symbol algebra $(\lambda, \zeta)_p$ is not a matrix algebra over a field as $\zeta \notin \Nr(F(\sqrt[p]{\lambda}, \zeta)/F(\zeta))$. We claim that there is no $[\alpha] \in H^2(H,F^*)$ such that $F^\alpha H \cong F^\beta G$ and thus $G \not \sim_F H$. Indeed if $F^\alpha H \cong F^\beta G$, then $F^\alpha H$ has exactly $d+1$ summands which are fields and one of these fields is $F(\sqrt[p]{\lambda})$. Using the direct summand $F^\gamma(H/\langle c \rangle)$ described above, by Section~\ref{sec:CpxCp} this means that we can assume that $[\alpha]$ is given by relations $u_a^p = \lambda$ and $u_b^p = 1$. Note here that we can interchange $a$ and $b$ by an isomorphism of $H$. Then the non-commutative summands in  the Wedderburn decomposition of $F^\alpha H$ are isomorphic to $(\lambda,1)_p$ which is a matrix ring over a field by Lemma~\ref{lem:SymbolAlgebras}, so $F^\alpha H \not \cong F^\beta G$.
It now follows directly from Lemma~\ref{lem:Norm} that $G \not\sim_\Q H$. Hence Theorem 1 holds. \hfill \qed

\subsection{Answer to Question~\ref{que:QvsRest} and groups of order $p^4$}
We next study those groups of order $p^4$ which have isomorphic group algebras over $\mathbb{Q}$, and hence over all fields of characteristic different from $p$. This includes the proof of Theorem 2 which is a direct combination of Propositions~\ref{prop:Groups3^4Part1}, \ref{prop:Groups3^4Part2} and \ref{prop:Groupsp^4}. We consider all such groups with the exception of the direct product of a cyclic group of order $p$ and a non-abelian group of order $p^3$, as this case follows from Section~\ref{sec:p^3}. There are some differences between the cases $p=3$ and $p > 3$ and we will treat them separately, although the arguments are similar in both cases. We remark that the complex twisted group algebras for all groups of order $p^4$ were described in \cite{Higgs2006}.

First assume $p=3$ and consider the following groups of order $81$:
$$G_8=\langle a,b,c\ | \ a^9=b^3=c^3=1, [a,b]=c, [c,a] = 1, [c,b]=a^3\rangle,$$
$$G_9=\langle a,b,c\ | \ a^9=b^3=c^3=1, [a,b]=c, [c,a] = 1, [c,b]=a^{-3} \rangle,$$
$$G_{10}=\langle a,b,c \ | \ a^9=c^3=1, b^3=a^{-3}, [a,b] = c, [c,a] = 1, [c,b] =a^{-3} \rangle.$$
The numbering of the groups is coming from the Small Group Library. For all $i \in \{8,9,10\}$ we have $Z(G_i) = \langle a^3 \rangle$ and $G_i/Z(G_i) \cong H$ where $H$ denotes the Heisenberg group of order $27$. 
Furthermore, $G_i' = \langle a^3, c \rangle$ and $G_i/G_i' = \langle \bar{a} \rangle \times \langle \bar{b} \rangle \cong C_p \times C_p$. 
Moreover, by \cite[Theorem 3.1]{Higgs2006} we have $M(G_8) \cong M(G_{10}) \cong C_p$ and $M(G_9) \cong C_p \times C_p$. 

We first handle cohomology classes containing symmetric cocycles.
Let $[\alpha _i] \in \Ext(G_i/G_i', F^*)$. So by the properties of $G_i$ mentioned above $[\alpha _i]$ is determined by the following relations in the twisted group algebra for $i \in \{8,9 \}$
$$u_a^3=\lambda u_{a^3} \text{ and } u_b^3=\mu \text{ for } \lambda , \mu\in F^*$$
and for $i = 10$ by
$$u_a^3=\lambda u_{a^3} \text{ and } u_b^3=\mu u_{a^{-3}} \text{ for } \lambda , \mu\in F^*.$$

\begin{lemma}\label{lem:ExtCycle3^4}
	With the above notation
	$$F^{\alpha _8}G_8 \cong F^{\alpha _9}G_9 \cong F^{\alpha _{10}}G_{10}.$$
\end{lemma}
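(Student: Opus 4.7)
My plan is to apply Theorem~\ref{th:NEWINFSTUFF} with the central subgroup $A = \langle a^3 \rangle = Z(G_i)$, whose quotient $G_i/A$ is the Heisenberg group $H$ of order $27$ for each $i \in \{8, 9, 10\}$. Since $\langle a^3 \rangle \subseteq G_i'$, the cocycle $\alpha_i$ inflated from $G_i/G_i'$ is also inflated from $H$, so the theorem applies and yields
\[ F^{\alpha_i} G_i \;\cong\; \bigoplus_{\chi \in \Irr(\langle a^3 \rangle, F)} F(\chi)^{(\alpha_i)_\chi} H. \]
For the trivial character, setting $u_{a^3} = 1$ in $F^{\alpha_i} G_i$ sends both $u_{a^{\pm 3}}$ and the central commutator $[u_c, u_b] = u_{a^{\pm 3}}$ to $1$, so the summand becomes in every case the Heisenberg twisted algebra with $u_{\bar a}^3 = \lambda$, $u_{\bar b}^3 = \mu$, $u_{\bar c}^3 = 1$, $u_{\bar c}$ central and $[u_{\bar a}, u_{\bar b}] = u_{\bar c}$. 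This summand is thus independent of $i$.

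For a non-trivial character $\chi$ with $\chi(a^3) = \zeta$ a primitive cube root of unity, setting $u_{a^3} = \zeta$ yields a summand $A_i$ which is a $27$-dimensional $F(\zeta)$-algebra generated by $x, y, z$ (the images of $u_a, u_b, u_c$) satisfying the common relations $x^3 = \lambda \zeta$, $z^3 = 1$, $[x, z] = 1$, $[x, y] = z$, together with the $i$-dependent ones: $y^3 = \mu$ and $zy = \zeta yz$ for $i = 8$; $y^3 = \mu$ and $zy = \zeta^{-1} yz$ for $i = 9$; and $y^3 = \mu \zeta^{-1}$ and $zy = \zeta^{-1} yz$ for $i = 10$. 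The main difficulty is to show $A_8 \cong A_9 \cong A_{10}$ despite the differing presentations.

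To achieve this, introduce the pairwise orthogonal idempotents
\[ e_j = \tfrac{1}{3}\bigl(1 + \zeta^{-j} z + \zeta^{-2j} z^2\bigr), \qquad j \in \{0, 1, 2\}, \]
which satisfy $e_0 + e_1 + e_2 = 1$ and $z e_j = \zeta^j e_j$. A short computation with the relation $zy = \zeta^{\pm 1} yz$ gives $y e_j y^{-1} = e_{j \pm 1}$, so in each $A_i$ the unit $y$ cyclically permutes $\{e_0, e_1, e_2\}$ transitively, and Lemma~\ref{lem:PassmanMatrices} yields $A_i \cong M_3(e_0 A_i e_0)$. Using $[x, z] = 1$ (so $x$ commutes with $e_0$), $z e_0 = e_0$, and $e_0 y^\beta e_0 = 0$ for $\beta \not\equiv 0 \pmod{3}$ (which follows from $e_0 y^\beta = y^\beta e_{\mp \beta}$ and orthogonality), one checks that the corner algebra $e_0 A_i e_0$ has $F(\zeta)$-basis $\{e_0, x e_0, x^2 e_0\}$ with $(xe_0)^3 = \lambda \zeta \cdot e_0$. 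Hence $e_0 A_i e_0 \cong F(\zeta)[T]/(T^3 - \lambda \zeta)$, independent of $i$, which concludes the argument.
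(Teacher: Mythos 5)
Your proof is correct and follows essentially the same route as the paper: decompose $F^{\alpha_i}G_i$ via Theorem~\ref{th:NEWINFSTUFF} over the central subgroup $\langle a^3\rangle$, observe the trivial-character summand is manifestly independent of $i$, and reduce each non-trivial-character summand to $M_3$ of a corner algebra depending only on $x^3=\lambda\zeta$. The only difference is that you carry out the matrix-unit computation (Lemma~\ref{lem:PassmanMatrices} with the eigenidempotents of $z$ permuted by $y$) inline, whereas the paper cites the identical computation already done for the Heisenberg group in Case 2 of Section~\ref{sec:p^3}.
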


\begin{proof}
We will apply Theorem~\ref{th:NEWINFSTUFF} with the central subgroup $\langle a^3 \rangle$. So we have
$$F^{\alpha_i} G_i \cong \bigoplus_{\chi \in \Irr(\langle a^3 \rangle, F)} F(\chi)^{(\alpha_i)_\chi}H, $$
where 
$$H = (\langle \bar{c} \rangle \times \langle \bar{a} \rangle ) \rtimes \bar{b}, \ \ \bar{a}^3 = \bar{b}^3 = \bar{c}^3 = 1, \ \bar{a}^{\bar{b}} = \bar{a}\bar{c}, \ \bar{c} \in Z(H)$$
is the Heisenberg group. 
Recall from Section~\ref{sec:p^3} that an element in $H^2(H, F^*)$ is explained by relations $u_{\bar{a}}^3 = \lambda'$, $u_{\bar{b}}^3 = \mu'$, $[u_{\bar{c}}, u_{\bar{a}}] = \zeta^k$ and $[u_{\bar{c}}, u_{\bar{b}}] = \zeta^\ell$ for some $\lambda', \mu' \in F$ and where $k=\ell=0$ if $\zeta \notin F$.

First consider $\chi$ to be the principal character. Then $[(\alpha_i)_\chi]$ is, independently of $i$, explained by $u_{\bar{a}}^3 = \lambda$, $u_{\bar{b}}^3 = \mu$ and $u_{\bar{c}}$ is central. So the isomorphism type of this direct summand of $F^{\alpha_i}G_i$ does not depend on $i$.

Next let $\chi(a^3) = \zeta^\ell$ where $\zeta^\ell \neq 1$. So in particular, $F(\chi) = F(\zeta)$. Then $[(\alpha_i)_\chi]$ is explained by $u_{\bar{a}}^3 = \lambda \zeta^\ell$, $[u_{\bar{c}}, u_{\bar{a}}] = 1$ and 
$$u_{\bar{b}}^3 =  \left\{ \begin{array}{cc} \mu, & \text{if} \ \ i \in \{8,9\} \\  \mu \zeta^{-\ell} , & \text{if} \ \ i = 10  \end{array}\right. , \ \ \ \ \ [u_{\bar{c}}, u_{\bar{b}}] =  \left\{ \begin{array}{cc} \zeta^{-\ell}, &  \text{if} \ \ i \in \{9,10\} \\  \zeta^\ell , & \text{if} \ \ i = 8  \end{array}\right. .$$
By the last case considered in Section~\ref{sec:p^3} we then have, again independently of $i$, that 
$$F(\chi)^{(\alpha_i)_\chi}H \cong \left\{ \begin{array}{cc} M_p(F(\zeta, \sqrt[p]{\lambda \zeta^\ell})),&  \text{if} \ \ \sqrt[p]{\lambda \zeta^\ell} \notin F \\  p M_p(F(\zeta)),& \text{otherwise} \end{array}\right. .$$
So also in this case the isomorphism type of the direct summand corresponding to $\chi$ is independent of $i$ and the lemma follows.
\end{proof}

This allows us to determine over which fields these groups are in relation.

\begin{proposition}\label{prop:Groups3^4Part1}
Let $F$ be a field of characteristic different from $3$ and assume that $F$ contains no primitive $3$-rd too of unity. Then $G_8 \sim_F G_9 \sim_F G_{10}$. 
\end{proposition}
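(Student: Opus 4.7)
The plan is essentially to combine Lemma~\ref{lem:ExtCycle3^4} with the hypothesis on $F$. First I would observe that because $F$ contains no primitive $3$-rd root of unity, $F^*$ has no element of order $3$, hence $\Hom(C_3, F^*) = 1$. Since $M(G_8), M(G_9), M(G_{10})$ are all $3$-groups (they are subquotients of the Schur multiplier of a group of order $81$, and by the explicit description they are $C_3$, $C_3 \times C_3$, $C_3$ respectively), we conclude that $\Hom(M(G_i), F^*) = 1$ for each $i \in \{8, 9, 10\}$. The split exact sequence of Theorem~\ref{th:UCT} then collapses to an isomorphism
\[ H^2(G_i, F^*) \cong \Ext(G_i/G_i', F^*), \]
so every cohomology class in $H^2(G_i, F^*)$ is inflated from $G_i/G_i'$, i.e.\ contains a symmetric cocycle of the form described immediately before Lemma~\ref{lem:ExtCycle3^4}.

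Next I would use that $G_i/G_i' \cong C_3 \times C_3$ for every $i$, with the canonical generators $\bar a, \bar b$ inherited from the presentations. By \eqref{eq:EXTdecomposition} and \eqref{eq:cohoofcyclic} this provides a canonical identification
\[ \Ext(G_i/G_i', F^*) \;\cong\; F^*/(F^*)^3 \times F^*/(F^*)^3, \]
under which a class corresponds to a pair $(\lambda,\mu)$ realized in the inflated cocycle by the relations stated before Lemma~\ref{lem:ExtCycle3^4}.

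I would then define, for any $i, j \in \{8, 9, 10\}$, the map $\psi_{ij} : H^2(G_i, F^*) \to H^2(G_j, F^*)$ to be the composition of the two canonical identifications just described, i.e.\ the map sending the class with parameters $(\lambda,\mu)$ in $G_i$ to the class with the same parameters $(\lambda,\mu)$ in $G_j$. This is manifestly a group isomorphism. Finally, Lemma~\ref{lem:ExtCycle3^4} applied to any class $[\alpha_i]$ corresponding to $(\lambda,\mu)$ gives
\[ F^{\alpha_i} G_i \;\cong\; F^{\psi_{ij}(\alpha_i)} G_j, \]
which is exactly the relation $G_i \sim_F G_j$.

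There is no serious obstacle: once one accepts the reduction $H^2 = \Ext$ that the hypothesis on $F$ forces, the statement is essentially a repackaging of Lemma~\ref{lem:ExtCycle3^4}. The only mild subtlety is to make sure that the parameter $(\lambda,\mu)$ really does define the same abelian-group structure on each $\Ext(G_i/G_i', F^*)$, which follows from the functoriality of $\Ext(-,F^*)$ applied to the three chosen isomorphisms $G_i/G_i' \cong C_3 \times C_3$ sending $\bar a, \bar b$ to the standard generators.
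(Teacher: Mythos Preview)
Your proposal is correct and follows essentially the same approach as the paper: reduce to symmetric cocycles via $\Hom(M(G_i),F^*)=1$, parameterize $\Ext(G_i/G_i',F^*)$ by pairs $(\lambda,\mu)$, send $(\lambda,\mu)$ to $(\lambda,\mu)$, and invoke Lemma~\ref{lem:ExtCycle3^4}. The paper's proof is the same argument with slightly less detail on the $\Ext$ identification.
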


\begin{proof}
By the assumption on $F$ we have $\Hom(M(G_i), F^*) = 1$, so any cohomology class $[\alpha_i] \in H^2(G_i, F^*)$ contains a symmetric cocycle. Such a cohomology class can be explained by the relations  
$$u_a^3=\lambda u_{a^3} \text{ and } u_b^3=\mu \text{ for } \lambda , \mu\in F^*$$
for $i \in \{8,9\}$ and for $i = 10$ by
$$u_a^3=\lambda u_{a^3} \text{ and } u_b^3=\mu u_{a^{-3}} \text{ for } \lambda , \mu\in F^*.$$
Denote this cohomology class by $[\alpha_i^{\lambda, \mu}]$. Hence Lemma~\ref{lem:ExtCycle3^4} implies that for $i,j \in \{8,9,10\}$ the map 
\[\varphi_{i,j}: H^2(G_i, F^*) \rightarrow H^2(G_j, F^*),  \ \ [\alpha_i^{\lambda, \mu}] \mapsto [\alpha_j^{\lambda, \mu}] \]
is an isomorphism of groups such that $F^{\alpha_i^{\lambda, \mu}} G_i \cong F^{\alpha_j^{\lambda, \mu}} G_j$. We conclude that $G_8 \sim_F G_9 \sim_F G_{10}$.
\end{proof}

\begin{proposition}\label{prop:Groups3^4Part2}
Let $F$ be a field of characteristic different from $3$ containing a primitive $3$-rd root of unity. Then $G_8 \not\sim_F G_9$ and $G_8 \sim_F G_{10}$.
\end{proposition}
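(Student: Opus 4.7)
Part 1 (the direction $G_8 \not\sim_F G_9$) follows quickly from Proposition~\ref{prop:IsosFromSalatim}: any relation $G_8 \sim_F G_9$ would force $\Hom(M(G_8), F^*) \cong \Hom(M(G_9), F^*)$. Since $\zeta \in F$ and $M(G_8) \cong C_3$, $M(G_9) \cong C_3 \times C_3$, these are $C_3$ and $C_3 \times C_3$, which are not isomorphic as abelian groups.

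For $G_8 \sim_F G_{10}$, Theorem~\ref{th:UCT} gives $H^2(G_i, F^*) \cong \Ext(G_i/G_i', F^*) \times \Hom(M(G_i), F^*) \cong (F^*/(F^*)^3)^2 \times C_3$ for both $i \in \{8,10\}$, so an abstract group isomorphism exists. My plan is to construct $\psi$ componentwise: on the $\Ext$ summand via the identification from Proposition~\ref{prop:Groups3^4Part1} (using Lemma~\ref{lem:ExtCycle3^4}), which already yields $F^\alpha G_8 \cong F^{\psi(\alpha)} G_{10}$ for symmetric cocycles; on the $\Hom(M(G_i), F^*) \cong C_3$ complement (split off by Theorem~\ref{th:UCT}) by the obvious group isomorphism. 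The nontrivial task is to verify the isomorphism of twisted algebras for cocycles with non-trivial image in $\Hom(M(G_i), F^*)$.

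For that verification, apply Theorem~\ref{th:NEWINFSTUFF} with the central subgroup $A = \langle a^3 \rangle = Z(G_i)$, whose quotient $G_i/A$ is the Heisenberg group $H$ of order $27$ for both $i$. A five-term Lyndon--Hochschild--Serre calculation shows that the kernel of the inflation $\inf\colon H^2(H, F^*) \to H^2(G_i, F^*)$ is $\Hom(A, F^*) \cong C_3$ (since $A \subseteq G_i'$ makes the restriction $\Hom(G_i, F^*) \to \Hom(A, F^*)$ trivial). Since $\inf$ is functorial in the abelianization and $H/H' \cong G_i/G_i'$, it restricts to an isomorphism on $\Ext$-parts, and the induced map $\Hom(M(H),F^*) = C_3 \times C_3 \to \Hom(M(G_i),F^*) = C_3$ must then be surjective, forcing $\inf$ itself to be surjective. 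Consequently every $[\alpha] \in H^2(G_i, F^*)$ is the inflation of some $[\alpha''] \in H^2(H, F^*)$ and Theorem~\ref{th:NEWINFSTUFF} gives
$$F^\alpha G_i \cong \bigoplus_{\chi \in \Irr(A, F)} F(\chi)^{\alpha''_\chi{}^{(i)}} H, \quad \alpha''_\chi{}^{(i)}(g,h) = \alpha''(g,h)\,\chi(\sigma_i(g,h)),$$
where $\sigma_i \in Z^2(H, A)$ is the cocycle classifying the extension $1 \to A \to G_i \to H \to 1$.

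The main obstacle is that $\sigma_8$ and $\sigma_{10}$ represent different classes in $H^2(H, A)$, so the twists $\alpha''_\chi{}^{(8)}$ and $\alpha''_\chi{}^{(10)}$ differ by the class $[\chi \circ (\sigma_8 \sigma_{10}^{-1})] \in H^2(H, F^*)$, which lies in the $\Hom(M(H), F^*)$ part. Using the classification of twisted group algebras of the Heisenberg group from Section~\ref{sec:p^3} (the two regimes there correspond to whether the $\Hom(M(H), F^*)$-component of the cocycle is trivial or not), I expect that replacing $\alpha''$ by an appropriate $\tilde\alpha''$, which modifies only the $\Ext$-parameters $\lambda, \mu$ in $\alpha''$, yields $F(\chi)^{\alpha''_\chi{}^{(8)}} H \cong F(\chi)^{\tilde\alpha''_\chi{}^{(10)}} H$ for every $\chi \in \Irr(A,F)$ simultaneously. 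This parameter substitution for the non-symmetric cocycles is the analogue of the swap implicit in Lemma~\ref{lem:ExtCycle3^4}, and its verification, which will require a case analysis on whether $\zeta^{\ell} \lambda$ is a $p$-th power in $F$, is the most delicate part of the proof.
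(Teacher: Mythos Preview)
Your Part 1 is correct and identical to the paper's argument.

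For Part 2, your overall strategy of reducing to twisted Heisenberg algebras via Theorem~\ref{th:NEWINFSTUFF} is sound and in fact very close in spirit to what the paper does: the paper's idempotents $f_i$ coming from $F\langle u_{a^3}\rangle$ implement exactly the decomposition of Theorem~\ref{th:NEWINFSTUFF}, and the subsequent idempotents $e_i$ amount to the Case~2 analysis of Section~\ref{sec:p^3}. So in principle your route and the paper's lead to the same computation.

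However, your plan contains a genuine gap at the point you yourself flag as delicate. You propose to build $\psi$ \emph{componentwise} on $\Ext(G_i/G_i',F^*)\times \Hom(M(G_i),F^*)$, using the identification of Lemma~\ref{lem:ExtCycle3^4} on the first factor and the obvious isomorphism of $C_3$'s on the second. This specific $\psi$ does \emph{not} work. If one carries out the Heisenberg decomposition for the $\ell$-th summand with $r\not\equiv 0$, the invariant controlling $F^{\alpha_8}G_8$ at level $\ell\neq 0$ is (up to cubes) $\zeta^\ell\lambda\mu^{-\ell r}$, whereas for $G_{10}$ with the same $(\lambda,\mu,r)$ one obtains $\zeta^{\ell-r}\lambda\mu^{\ell r}$; these do not agree, even after permuting~$\ell$, for generic $\lambda,\mu\in F^*$. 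The paper's $\psi$ is instead given by $(\lambda,\mu,r)\mapsto(\zeta^r\lambda,\mu^{-1},r)$, which is a group homomorphism but \emph{mixes} the two factors: the $\Ext$-part of $\psi(\alpha)$ depends on the $\Hom$-part of $\alpha$. Your later remark that one should ``modify only the Ext-parameters'' is on the right track, but you have not discovered that the required modification depends on $r$, and this is precisely the content that makes the proof go through.

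Two smaller points: your claim that $[\chi\circ(\sigma_8\sigma_{10}^{-1})]$ lies in $\Hom(M(H),F^*)$ is not correct, since $G_{10}$ also differs from $G_8$ in the relation $b^3=a^{-3}$, which contributes an $\Ext$-type term; and your surjectivity argument for $\inf$ relies on an implicit cardinality count that fails when $\Ext(G_i/G_i',F^*)\cong (F^*/(F^*)^3)^2$ is infinite. The latter is easily fixed by observing directly that the generator $[\beta_8]$ (resp.\ $[\beta_{10}]$) the paper writes down is literally the inflation of the class $[\beta_a]\in H^2(H,F^*)$ from Section~\ref{sec:p^3}.
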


\begin{proof}
As $M(G_8) \cong C_3$ and $M(G_9) \cong C_3 \times C_3$, we obtain $\Hom(M(G_8), F^*) \cong C_3$ while $\Hom(M(G_{9}), F^*) \cong C_3 \times C_3$. Hence $G_8 \not\sim_F G_{9}$ by Proposition~\ref{prop:IsosFromSalatim}.
  
We need to show that $G_8 \sim_F G_{10}$. 
$M(G_8)$ and $M(G_{10})$ are both explained by the following cohomology classes:
\begin{align*}
[\beta_8]: [u_a,u_b] = u_c, [u_c,u_a] = \zeta, [u_c,u_b]=u_{a^3}, \\
[\beta_{10}]: [u_a,u_b] = u_c, [u_c,u_a] = \zeta, [u_c,u_b]=u_{a^{-3}}. 
\end{align*}
To show that $[\beta_8]$ is indeed a cohomology class notice that, $G_8=(\langle a \rangle\times \langle c \rangle) \rtimes \langle b\rangle$, and by our knowledge on the cohomology of an abelian group there is a cohomology class $[\tilde{\beta_8}]$ defined on $\langle a \rangle\times \langle c \rangle$ by 
$[u_c,u_a]=\zeta $. Hence, in order to show that $[\beta_8]$ is indeed a cohomology class it is sufficient, by Lemma~\ref{lemma:semicoprimeSchur}, to show that $[\tilde{\beta_8}]$ is invariant to the action of $b$ on $\langle a \rangle\times \langle c \rangle$. It is now a straightforward calculation to check that
$$[u_{b(c)},u_{b(a)}]=[u_{a^3c},u_{ac}]=\zeta$$
and hence $[\beta_8]$ is indeed a cohomology class of $G_8$. By similar arguments we can show that $[\beta_{10}]$ is a cohomology class of $G_{10}$.

Let a general element $[\alpha_8] \in H^2(G_8, F^*)$ be explained by the product of $[\beta_8]^r$ and a cohomology class containing a symmetric cocycle such that $u_a^3 = \lambda u_{a^3}$ and $u_b^3 = \mu$. So overall we can assume that $[\alpha_8]$ is explained by
\[ [u_a,u_b] = u_c, [u_c,u_a] = \zeta, [u_c,u_b]=u_{a^3}, u_a^3 = \lambda u_{a^3}, u_b^3 = \mu. \]
Let $\psi:H^2(G_8,F^*) \rightarrow H^2(G_{10},F^*)$ be a map defined by $\psi([\alpha_8]) = [\alpha_{10}]$ such that $[\alpha_{10}]$ is explained by the product of $[\beta_{10}]^r$ and a cohomology class containing a symmetric cocycle such that $u_a^3 = \zeta^r \lambda u_{a^3}$ and $u_b^3 = \mu^{-1} u_{a^{-3}}$. So $[\alpha_{10}]$ is explained by
 \[[u_a,u_b] = u_c, [u_c,u_a] = \zeta, [u_c,u_b]=u_{a^{-3}}, u_a^3 = \zeta^r \lambda u_{a^3}, u_b^3 = \mu^{-1} u_{a^{-3}}.  \]
 
We claim that $\psi$ is an isomorphism realizing the relation $G_8 \sim_F G_{10}$. It is easy to check that $\psi$ is an isomorphism of groups, so it remains to check that $F^{\alpha_8}G_8 \cong F^{\alpha_{10}}G_{10}$.
If $r \equiv 0 \bmod 3$, then we know $F^{\alpha_8} G_8 \cong F^{\alpha_{10}}G_{10}$ from Lemma~\ref{lem:ExtCycle3^4}. Note here that in Lemma~\ref{lem:ExtCycle3^4} we had $[\alpha_{10}]$ explained by $u_b^3=\mu u_{a^{-3}}$, but this difference does not influence the calculations. So assume $r \not\equiv 0 \bmod 3$.

Set $A = F^{\alpha_8}G_8$. The element $u_{a^3}$ is central in $A$ and satisfies $u_{a^3}^3 = 1$. Hence the central subalgebra $F\langle u_{a^3} \rangle$ is isomorphic to $F \oplus F \oplus F$ and contains three primitive central idempotents $f_0$, $f_1$ and $f_2$, such that 
\[A = Af_0 \oplus Af_1 \oplus Af_2 \]
and $f_iu_{a^3} = f_i\zeta^i$ for $0\leq i \leq 2$. Set $A_i = Af_i$. Then $A_0$ is a twisted group algebra of the Heisenberg group $G_8/\langle a^3 \rangle$ which is by Section~\ref{sec:p^3} isomorphic to $M_3(F(\sqrt[3]{\mu}))$, if $\sqrt[3]{\mu} \notin F$, and to $3M_3(F)$ otherwise.

For this paragraph let $i \in \{1,2 \}$. Recall that for an element $x$ in an $F$-algebra $X$ and a unit $u \in X$ we write $x^u = u^{-1}xu$.  Let $e_i = \frac{1}{3}(1+u_c+u_{c^2})f_i$. As $u_c^3 = 1$ this is an idempotent in $A_i$. Note that $u_af_i$ is a unit in $A_i$. As $[u_c,u_a]=\zeta^r$ the elements $e_i, e_i^{u_af_i}, e_i^{u_a^2f_i}$ are pairwise orthogonal idempotents which sum up to $f_i$. Hence $A_i \cong M_3(e_iA_ie_i)$ by Lemma~\ref{lem:PassmanMatrices}. Note that $e_iu_ae_i = 0$ and $e_iu_ce_i = e_i$. Let $v_i = u_au_b^{-ir}f_i$. We claim that $(e_if_i)^{v_i} = e_if_i$. Indeed
\begin{align*}
(e_i f_i)^{v_i} =& \frac{1}{3}(1+u_c+u_{c^2})^{u_au_b^{-ir}}f_i = \frac{1}{3}(1+\zeta^r u_c+ \zeta^{2r} u_{c^2})^{u_b^{-ir}}f_i \\
 =& \frac{1}{3}(1+\zeta^r u_{a^3}^{-ir} u_c+ \zeta^{2r} u_{a^3}^{-2ir} u_{c^2}) f_i \\
  =& \frac{1}{3}(1+\zeta^r \zeta^{-i^2r} u_c+ \zeta^{2r} \zeta^{-2i^2r} u_{c^2}) f_i =  e_if_i. 
\end{align*}
Hence $e_iv_ie_i = e_iv_i$ and $(e_iv_i)^3 = e_if_i \zeta^i \lambda \mu^{-ir}$. So
\[e_iA_ie_i = \left\{ \begin{array}{ll} F(\sqrt[3]{\zeta^i \lambda \mu^{-ir}}), & \text{if} \ \sqrt[3]{\zeta^i \lambda \mu^{-ir}} \notin F, \\ 3F, & \text{otherwise} \end{array} \right..  \]

In a similar way we can consider $F^{\alpha_{10}}G_{10}$. Here we can also define the $f_i$, $A_i$ and $e_i$ as for $G_8$. So we have $A_0 \cong M_3(F(\sqrt[3]{\mu^{-1}}))$, if $\sqrt[3]{\mu^{-1}} \notin F$, and $3M_3(F)$ otherwise. We define $v_i = u_au_b^{ir}f_i$, so that again $(e_if_i)^{v_i} = e_if_i$. Moreover,
\[(e_iv_ie_i)^3 = e_i \lambda \zeta^r u_{a^3} \mu^{-ir}u_{a^{-3}}^{ir} f_i e_i = e_i \zeta^{r+i-i^2r} \lambda \mu^{-ir} f_i e_i = e_i \zeta^{i} \lambda \mu^{-ir} f_i e_i. \]
So in both cases the three direct summands of the algebras are isomorphic.
\end{proof}

We will now consider the case $p >3$. The arguments here are similar and we will not give as many details as before, but now there are four groups involved. Again we use the numbering of the Small Groups Library for our groups and set
\begin{align*}
G_7 &=  \langle a,b,c,d \ | \ a^p = b^p = c^p = d^ p = 1, [a,b] = c, [c,a] = 1, [c,b] = d, d \in Z(G_7) \rangle, \\
G_8 &= \langle a,b,c \ | \ a^{p^2} = b^p = c^p = 1, [a,b]=c, [c,a] = a^p, [c,b] = 1 \rangle 
\end{align*}
and
\begin{align*}
G_9  &= \langle a,b,c \ | \ a^{p^2} = b^p = c^p = 1, [a,b]=c, [c,a] = 1, [c,b] = a^p \rangle  \\
G_{10} &= \langle a,b,c \ | \ a^{p^2} = b^p = c^p = 1, [a,b]=c, [c,a] = 1, [c,b] = a^{pm} \rangle 
\end{align*}
where $m$ is a quadratic nonresidue modulo $p$.

Then $Z(G_7) = \langle d \rangle$ and $Z(G_i) = \langle a^p \rangle$ for $i \in \{8,9,10 \}$. Moreover $G_i/Z(G_i) \cong H$ where $H$ denotes the Heisenberg group for all $i \in \{7,8,9,10 \}$. Furthermore $G_i' = \langle Z(G_i), c \rangle$ and $G_i/G_i' = \langle \bar{a} \rangle \times \langle \bar{b} \rangle \cong C_p \times C_p$, for all $i$. Finally $M(G_7) \cong C_p \times C_p$ while $M(G_i) \cong C_p$ for $i \in \{8,9,10 \}$
by \cite[Theorem 3.1]{Higgs2006}.

Again we will first consider cohomology classes containing symmetric cocycles. By the above we can assume that $[\alpha_i] \in \Ext(G_i, F^*)$ is determined by relations
$$u_a^p = \left\{ \begin{array}{cc}  \lambda, & \text{if}  \ \ i=7   \\ \mu^{-1} u_{a^p}, & \text{if}  \ \ i=8 \\ \lambda u_{a^p}, & \text{if} \ \ i \in \{9,10 \} \end{array} \right. \ \ \text{and} \ \ 
 u_b^p =   \left\{ \begin{array}{cc}  \mu, & \text{if}  \ \ i \in \{7,9,10 \}   \\ \lambda, & \text{if}  \ \ i=8 \end{array} \right.$$
for $\lambda, \mu \in F^*$.

\begin{lemma}\label{lem:ExtCyclep^4}
With the above notation
	$$F^{\alpha_7}G_7 \cong F^{\alpha_8}G_8$$
	and
	$$ F^{\alpha_{9}}G_9 \cong F^{\alpha_{10}}G_{10}.$$	
\end{lemma}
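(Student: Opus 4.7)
The proof follows the strategy of Lemma~\ref{lem:ExtCycle3^4}. For each $i\in\{7,8,9,10\}$ the centre $Z(G_i)$ is cyclic of order $p$ and is contained in $G_i'$, so the symmetric class $[\alpha_i]\in \Ext(G_i/G_i',F^*)$ is inflated from $H^2(G_i/Z(G_i),F^*)=H^2(H,F^*)$, where $H$ denotes the Heisenberg group of order $p^3$ (the common quotient $G_i/Z(G_i)$). Applying Theorem~\ref{th:NEWINFSTUFF} we obtain
\[F^{\alpha_i}G_i \;\cong\; \bigoplus_{\chi \in \Irr(Z(G_i), F)} F(\chi)^{(\alpha_i)_\chi}\,H,\]
and it then suffices to match, for each $\chi$, the summand coming from $G_7$ with that from $G_8$, and likewise the summand from $G_9$ with that from $G_{10}$, using the description of twisted Heisenberg algebras in \S\ref{sec:p^3}.

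For the principal character, the cocycle $(\alpha_i)_\chi$ is symmetric with parameters $(u_{\bar a}^p,u_{\bar b}^p)=(\lambda,\mu)$ for $G_7,G_9,G_{10}$ and $(\mu^{-1},\lambda)$ for $G_8$; this is Case~1 of \S\ref{sec:p^3}. A second application of Theorem~\ref{th:NEWINFSTUFF} on $\langle\bar c\rangle$ yields a commutative $F^\gamma(C_p\times C_p)$-summand (analysed in \S\ref{sec:CpxCp}) together with symbol algebras $(u_{\bar a}^p,u_{\bar b}^p;F(\zeta);\zeta^k)_p$ indexed by the non-principal characters. For the pair $(G_7,G_8)$ the symbol-algebra summands match by $(\mu^{-1},\lambda)_p\cong(\lambda,\mu)_p$, which is Lemma~\ref{lem:SymbolAlgebras}(4); the commutative summands match because the case distinctions in \S\ref{sec:CpxCp} (which parameter is a $p$-th power, and the fields generated by $p$-th roots of the non-$p$-th-power parameters) are invariant under inverting a parameter and under swapping the roles of the two generators. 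For $(G_9,G_{10})$ the principal-character summands are identical already at the level of the cocycle.

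For a non-principal character $\chi$ sending the generator of $Z(G_i)$ to $\zeta^\ell$, a direct computation produces the tuples $(u_{\bar a}^p, u_{\bar b}^p, [u_{\bar c},u_{\bar a}], [u_{\bar c},u_{\bar b}])$ equal to $(\lambda,\mu,1,\zeta^\ell)$ for $G_7$, $(\mu^{-1}\zeta^\ell,\lambda,\zeta^\ell,1)$ for $G_8$, $(\lambda\zeta^\ell,\mu,1,\zeta^\ell)$ for $G_9$ and $(\lambda\zeta^\ell,\mu,1,\zeta^{\ell m})$ for $G_{10}$. All four fall into Case~2 of \S\ref{sec:p^3}, which delivers $M_p(F(\zeta,\sqrt[p]{\delta}))$ or $p\,M_p(F(\zeta))$ (according to whether $\sqrt[p]{\delta}\in F(\zeta)$), with $\delta=\lambda$ for both $G_7$ and $G_8$ and $\delta=\lambda\zeta^\ell$ for both $G_9$ and $G_{10}$. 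Crucially, the Case~2 recipe sees the non-trivial commutator only through its non-triviality, so the specific value $\zeta^\ell$ versus $\zeta^{\ell m}$ on the $G_9$/$G_{10}$ side is immaterial.

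The main bookkeeping hurdle is the accurate extraction of the parameters of $(\alpha_i)_\chi$ from the combined data of the symmetric cocycle $[\alpha_i]$ and the cocycle $\sigma$ representing the central extension $1\to Z(G_i)\to G_i\to H\to 1$, especially the correct distribution of the contribution $\chi(a^p)=\zeta^\ell$ among the scalar parameters and the commutator relations on $H$; once this is in place, the identification of the summands is immediate from Lemma~\ref{lem:SymbolAlgebras} together with the two recipes of \S\ref{sec:p^3}.
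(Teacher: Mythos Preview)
Your proposal is correct and follows essentially the same approach as the paper: decompose $F^{\alpha_i}G_i$ via Theorem~\ref{th:NEWINFSTUFF} over the centre $Z(G_i)$, compute the parameter tuples $(u_{\bar a}^p,u_{\bar b}^p,[u_{\bar c},u_{\bar a}],[u_{\bar c},u_{\bar b}])$ for each character $\chi$, and match the resulting Heisenberg summands using the recipes of \S\ref{sec:p^3} together with the symbol-algebra identity $(\mu^{-1},\lambda)_p\cong(\lambda,\mu)_p$ from Lemma~\ref{lem:SymbolAlgebras}(4). Your write-up is in fact slightly more explicit than the paper's in spelling out why the commutative $C_p\times C_p$-summands from the principal character agree under $(\lambda,\mu)\leftrightarrow(\mu^{-1},\lambda)$, and in isolating the key observation that Case~2 of \S\ref{sec:p^3} depends only on which commutator is trivial and on the value of $u_{\bar a}^p$, so that the distinction $\zeta^\ell$ versus $\zeta^{\ell m}$ in $[u_{\bar c},u_{\bar b}]$ for $G_9$ versus $G_{10}$ is invisible to the output.
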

\begin{proof}
As in the situation for $p=3$ we will apply Theorem~\ref{th:NEWINFSTUFF} with the central subgroup $Z(G_i)$. Set $Z(G_i) = \langle z \rangle$. So we have
$$F^{\alpha_i} G_i \cong \bigoplus_{\chi \in \Irr(\langle z \rangle, F)} F(\chi)^{(\alpha_i)_\chi}H, $$
where 
$$H = (\langle \bar{c} \rangle \times \langle \bar{a} \rangle ) \rtimes \bar{b}, \ \ \bar{a}^3 = \bar{b}^3 = \bar{c}^3 = 1, \ \bar{a}^{\bar{b}} = \bar{a}\bar{c}, \ \bar{c} \in Z(H)$$
is the Heisenberg group. 
%Recall from Section~\ref{sec:p^3} that an element in $H^2(H, F^*)$ is explained by relations $u_{\bar{a}}^3 = \lambda'$, $u_{\bar{b}}^3 = \mu'$, $[u_{\bar{c}}, u_{\bar{a}}] = \zeta^k$ and $[u_{\bar{c}}, u_{\bar{b}}] = \zeta^\ell$ for some $\lambda', \mu' \in F$ and where $k=\ell=0$ if $\zeta \notin F$.

Let $\chi$ be the principal character. Then $[(\alpha_i)_\chi]$ is explained by
$$u_{\bar{a}}^p = \left\{ \begin{array}{cc}  \lambda, & \text{if}  \ \ i \in \{7,9,10\}   \\ \mu^{-1}, & \text{if}  \ \ i=8 \end{array} \right. \ \ \text{and} \ \ 
 u_{\bar{b}}^p =   \left\{ \begin{array}{cc}  \mu, & \text{if}  \ \ i \in \{7,9,10 \}   \\ \lambda, & \text{if}  \ \ i=8 \end{array} \right.$$ 
and $u_{\bar{c}}$ is central in all cases. From our previous considerations for the twisted group algebras of the Heisenberg group and the elementary abelian group of order $p^2$ we see that the isomorphism type of this direct summand does not depend on $i$. This follows from Lemma~\ref{lem:SymbolAlgebras}, as this gives $(\lambda, \mu)_p \cong (\mu^{-1},\lambda)_p$.

Next let $\chi(z) = \zeta^\ell$ where $\zeta^\ell \neq 1$. Then $[(\alpha_i)_\chi]$ is explained by 
$$u_{\bar{a}}^p = \left\{ \begin{array}{cc}  \lambda, & \text{if}  \ \ i=7   \\ \mu^{-1} \zeta^\ell, & \text{if}  \ \ i=8 \\ \lambda \zeta^\ell, & \text{if} \ \ i \in \{9,10 \} \end{array} \right. \ \ \text{,} \ \ 
 u_{\bar{b}}^p =   \left\{ \begin{array}{cc}  \mu, & \text{if}  \ \ i \in \{7,9,10 \}   \\ \lambda, & \text{if}  \ \ i=8 \end{array} \right.$$
and
$$[u_{\bar{c}}, u_{\bar{a}}] =  \left\{ \begin{array}{cc} 1, &  \text{if} \ \ i \in \{ 7,9,10\} \\  \zeta^\ell , & \text{if} \ \ i = 8  \end{array}\right. \ \ , \ \ [u_{\bar{c}}, u_{\bar{b}}] =  \left\{ \begin{array}{ccc} \zeta^\ell, &  \text{if} \ \ i \in \{7,9\} \\  1 , & \text{if} \ \ i = 8 \\ \zeta^{\ell m} , & \text{if} \ \ i = 10 \end{array}\right. .$$

From our considerations for the Heisenberg group we see that $F(\chi)^{(\alpha_7)_\chi} H \cong F(\chi)^{(\alpha_8)_\chi} H$ and $F(\chi)^{(\alpha_9)_\chi} H \cong F(\chi)^{(\alpha_{10})_\chi} H$ which implies the lemma.
\end{proof}

We obtain a result similar to the one for the case $p=3$.

\begin{proposition}\label{prop:Groupsp^4}
Let $p$ be a prime bigger than $3$ and let $F$ be a field of characteristic different from $p$. Then
\begin{enumerate}
\item $G_7 \sim_F G_8$ if and only if $F$ does not contain a primitive $p$-th root of unity. 
\item $G_9 \sim_F G_{10}$.
\end{enumerate}
\end{proposition}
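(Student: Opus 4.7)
The plan is to split according to whether $F$ contains a primitive $p$-th root of unity $\zeta$. When $\zeta \notin F$, $M(G_i)$ has exponent $p$ for all $i\in\{7,8,9,10\}$, so $\Hom(M(G_i),F^*)=1$; Theorem~\ref{th:UCT} then identifies $H^2(G_i,F^*)$ with $\Ext(G_i/G_i',F^*)\cong (F^*/(F^*)^p)^2$, parametrised by the pair $(\lambda,\mu)$ defining a symmetric cocycle as recorded just before Lemma~\ref{lem:ExtCyclep^4}. The tautological parameter map $(\lambda,\mu)\mapsto(\lambda,\mu)$ is then a group isomorphism $H^2(G_7,F^*)\to H^2(G_8,F^*)$ (and analogously for the pair $G_9,G_{10}$), and Lemma~\ref{lem:ExtCyclep^4} matches the corresponding twisted group algebras. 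This furnishes the ``if'' direction of (1) together with one half of (2).

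When $\zeta\in F$, for part (1) one has $\Hom(M(G_7),F^*)\cong C_p\times C_p$ whereas $\Hom(M(G_8),F^*)\cong C_p$, and Proposition~\ref{prop:IsosFromSalatim} forbids $G_7\sim_F G_8$, finishing the ``only if'' direction of (1). The remaining task is (2) with $\zeta\in F$, and I would follow the blueprint of Proposition~\ref{prop:Groups3^4Part2}, with $G_9$ vs $G_{10}$ playing the role of $G_8$ vs $G_{10}$ for $p=3$ and the quadratic non-residue $m$ replacing $-1$. Let $[\beta_9]$ be a generator of the image of $\Hom(M(G_9),F^*)$ in $H^2(G_9,F^*)$, described by adjoining the commutator relation $[u_c,u_a]=\zeta$ to the defining relations of $G_9$, and let $[\beta_{10}]$ be the corresponding class for $G_{10}$. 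Every $[\alpha_9]\in H^2(G_9,F^*)$ factors uniquely as $[\beta_9]^r$ times a symmetric class with parameters $(\lambda,\mu)$, and similarly for $G_{10}$. Define $\psi:H^2(G_9,F^*)\to H^2(G_{10},F^*)$ by sending $(r,\lambda,\mu)$ to an adjusted triple in which the exponents of $\zeta$ scaling $\lambda$ and $\mu$ are modified by $m$-dependent factors, following the $p=3$ template $(r,\lambda,\mu)\mapsto(r,\zeta^r\lambda,\mu^{-1})$ of Proposition~\ref{prop:Groups3^4Part2}; multiplicativity of the parametrisation makes $\psi$ a group isomorphism.

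For $r=0$ the required isomorphism $F^{\alpha_9}G_9\cong F^{\psi(\alpha_9)}G_{10}$ is given by Lemma~\ref{lem:ExtCyclep^4}. For $r\neq 0$ one decomposes both algebras via the central subalgebra $F\langle u_{a^p}\rangle$: since $u_{a^p}$ is central with $u_{a^p}^p\in F^*$, there are $p$ primitive central orthogonal idempotents $f_0,\ldots,f_{p-1}$ characterised by $u_{a^p}f_i=\zeta^i f_i$, and the summand at $f_0$ is a twisted algebra of the Heisenberg quotient $G_9/\langle a^p\rangle$ which coincides with its $G_{10}$-counterpart by Lemma~\ref{lem:ExtCyclep^4}. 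For $i\neq 0$ form $e_i=\tfrac{1}{p}\sum_{j=0}^{p-1}u_c^j f_i$; the orbit $\{e_i^{u_a^j}\}_{j=0}^{p-1}$ consists of $p$ orthogonal idempotents summing to $f_i$, so Lemma~\ref{lem:PassmanMatrices} gives $(F^{\alpha_9}G_9)f_i\cong M_p\bigl(e_i(F^{\alpha_9}G_9)e_i\bigr)$. One then picks a unit $v_i$ of the form $u_au_b^{k(i,r)}f_i$ with $(e_if_i)^{v_i}=e_if_i$, so that $e_i(F^{\alpha_9}G_9)e_i$ is generated by $e_iv_i$ whose $p$-th power is an explicit scalar in $F^*$; the parallel computation on the $G_{10}$ side yields the same scalar modulo $(F^*)^p$ once the $m$-dependent exponents of $\psi$ are calibrated correctly.

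The main obstacle is this final calibration: one must pick the exponents defining $\psi$ and the integer $k(i,r)$ so that the $p$-th powers of $e_iv_i$ on the $G_9$ and $G_{10}$ sides agree in $F^*/(F^*)^p$ for every $i\in\{1,\ldots,p-1\}$, paralleling the identities $(e_if_i)^{v_i}=e_if_i$ and the explicit computation of $(e_iv_i)^p$ verified in the proof of Proposition~\ref{prop:Groups3^4Part2}. The non-residue $m$ enters through the modified commutator relation $[c,b]=a^{pm}$ of $G_{10}$, which shifts the scalar produced in $(e_iv_i)^p$ and forces the adjustment of $\psi$.
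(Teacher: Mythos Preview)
Your outline matches the paper's proof step for step: the same case split on whether $\zeta\in F$, the same appeals to Lemma~\ref{lem:ExtCyclep^4} and Proposition~\ref{prop:IsosFromSalatim}, and for $G_9$ vs.\ $G_{10}$ with $\zeta\in F$ the same central-idempotent decomposition along $F\langle u_{a^p}\rangle$, followed by Lemma~\ref{lem:PassmanMatrices} applied to the orbit of $e_i$ under $u_a$-conjugation. What you have left open --- the ``calibration'' of $\psi$ and of the exponent $k(i,r)$ --- is precisely the content you still need, and the paper's answer turns out to be simpler than the $p=3$ template suggests.

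The map is $\psi:(r,\lambda,\mu)\mapsto(r,\lambda,\mu^m)$; no $\zeta$-scaling of $\lambda$ is required. On the $G_9$ side take $v_i=u_au_b^{-rj}f_i$ with $ij\equiv 1\bmod p$; the computation $(e_if_i)^{v_i}=e_if_i$ goes through because conjugation by $u_b^{-rj}$ contributes $u_{a^p}^{-rj}$-factors which on $f_i$ become $\zeta^{-irj}=\zeta^{-r}$, cancelling the $\zeta^r$ from conjugation by $u_a$. One gets $v_i^p=\zeta^i\lambda\mu^{-rj}$. On the $G_{10}$ side, with parameters $(r,\lambda,\mu^m)$, take $v_i=u_au_b^{-rj'}f_i$ with $imj'\equiv 1\bmod p$; the same cancellation works because the commutator $[c,b]=a^{pm}$ now contributes $u_{a^p}^{-rmj'}$-factors, and $v_i^p=\zeta^i\lambda(\mu^m)^{-rj'}$. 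Since $mj'\equiv j\bmod p$, the two scalars agree in $F^*/(F^*)^p$, so the corners $e_iA_ie_i$ on both sides are isomorphic.

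One small correction: for $r\neq 0$ the summand at $f_0$ is \emph{not} covered by Lemma~\ref{lem:ExtCyclep^4}, which treats only symmetric cocycles. Instead you should invoke the Heisenberg computation of Section~\ref{sec:p^3} (Case~2): on the $G_9$ side $A_0\cong M_p(F(\sqrt[p]{\mu}))$ (or $pM_p(F)$), on the $G_{10}$ side $A_0\cong M_p(F(\sqrt[p]{\mu^m}))$, and these agree because $\gcd(m,p)=1$ forces $F(\sqrt[p]{\mu})=F(\sqrt[p]{\mu^m})$.
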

\begin{proof}
First assume $F$ contains no primitive $p$-th root of unity. Then\\ $\Hom(M(G_i),F^*)$ is the trivial group for all $i$ and it follows from Lemma~\ref{lem:ExtCyclep^4} that $G_7\sim_F G_8$ and $G_9 \sim_F G_{10}$. Now assume $F$ does contain a primitive $p$-th root of unity $\zeta$. As $M(G_7) \cong C_p \times C_p$ while $M(G_8) \cong C_p$ we conclude from Proposition~\ref{prop:IsosFromSalatim} that $G_7 \not\sim_F G_8$. So it remains to show that $G_9 \sim_F G_{10}$.

We have $M(G_9) \cong M(G_{10}) \cong C_p$ and the Schur multipliers are generated by a cohomology class explained by
\begin{align*}
[\beta_9]:& \ [u_a,u_b] = u_c, [u_c,u_a]=\zeta, [u_c, u_b] = u_{a^p}, \\
[\beta_{10}]:& \ [u_a,u_b] = u_c, [u_c,u_a]=\zeta, [u_c, u_b] = u_{a^{pm}} \\
\end{align*}
for $G_9$ and $G_{10}$ respectively. It can be checked as in the proof of Proposition~\ref{prop:Groups3^4Part2} that $[\beta_9]$ and $[\beta_{10}]$ indeed define cohomology classes of the corresponding group.

Let a general cohomology class $[\alpha_9]$ in $H^2(G, F^*)$ be explained by the product of $[\beta_9]^r$ with the cohomology class containing a symmetric cocycle defined by $u_a^p = \lambda u_{a^p}$ and $u_b^p = \mu$. Let $\psi:H^2(G_9, F^*) \rightarrow H^2(G_{10},F^*)$ be a map and $\psi([\alpha_9]) = [\alpha_{10}]$ where $[\alpha_{10}]$ is explained by the product of $[\beta_{10}]^r$ and a cohomology class containing a symmetric cocycle defined by the relations $u_a^p = \lambda u_{a^p}$ and $u_b^p = \mu^m$. We claim that $\psi$ is a group isomorphism realizing the relation $G_9 \sim_F G_{10}$. 
If $r \equiv 0 \bmod p$, then $F^{\alpha_9}G_9 \cong F^{\alpha_{10}}G_{10}$ by Lemma~\ref{lem:ExtCyclep^4}. 

So let $r \not\equiv 0 \bmod p$ and set $A = F^{\alpha_9}G_9$. As $u_{a^p}$ is central in $A$ satisfying $u_{a^p}^p = 1$ the subalgebra $F\langle u_{a^p} \rangle$ is isomorphic to the direct sum of $p$ copies of $F$ such that each direct summand contains an idempotent $f_i$ which is central in $A$ and such that $u_{a^p} f_i = \zeta^i f_i$ for $0 \leq i \leq p-1$. Hence $A = \oplus_{i=0}^{p-1} Af_i$. Set $A_i = Af_i$. Then $A_0$ is a twisted group algebra of the Heisenberg group $G_9/\langle a^p \rangle$ which is by Section~\ref{sec:p^3} isomorphic with $M_p(F(\sqrt[p]{\mu}))$, if $\sqrt[p]{\mu} \notin F$, and $pM_p(F)$ otherwise.

For this paragraph let $1 \leq i \leq p-1$. Set $e_i = \frac{1}{p}(1+u_c+\ldots+u_c^{p-1})f_i$ which is an idempotent in $A_i$. Moreover $e_i,e_i^{u_af_i},\ldots,e_i^{u_a^{p-1}f_i}$ is a set of pairwise orthogonal idempotents which sum up to $f_i$. Hence by Lemma~\ref{lem:PassmanMatrices} we know $A_i \cong M_p(e_iA_ie_i)$. Let $j$ be an integer such that $ij \equiv 1 \bmod p$ and set $v_i = u_au_b^{-rj}f_i$. Then $v_i$ is centralizing $e_if_i$ as
\begin{align*}
(e_if_i)^{v_i} &= \frac{1}{p}(1+u_c+\ldots+u_c^{p-1})^{u_au_b^{-rj}}f_i =  \frac{1}{p}(1+\zeta^r u_c+\ldots + \zeta^{(p-1)r} u_c^{p-1})^{u_b^{-rj}}f_i \\
 &= \frac{1}{p}(1+\zeta^r u_{a^p}^{-rj} u_c+\ldots + \zeta^{(p-1)r} u_{a^p}^{-(p-1)rj} u_c^{p-1})f_i \\
 &= \frac{1}{p}(1+\zeta^{r-rij} u_c+\ldots + \zeta^{(p-1)r-(p-1)rij} u_c^{p-1})f_i = e_if_i.
\end{align*}    
Moreover $v_i^p = \zeta^i \lambda \mu^{-rj}$, so $A_i \cong M_p(F(\sqrt[p]{\zeta^i \lambda \mu^{-rj}}))$, if $\sqrt[p]{\zeta^i \lambda \mu^{-rj}} \notin F$, and $pM_p(F)$ otherwise.

To determine the isomorphism type of $F^{\alpha_{10}}G_{10}$ we can define the $f_i$, $A_i$ and $e_i$ in the same way. Then $A_0 \cong M_p(F(\sqrt[p]{\mu^m}))$, if $\sqrt[p]{\mu^m} \notin F$, and $pM_p(F)$ otherwise. Note that $F(\sqrt[p]{\mu}) = F(\sqrt[p]{\mu^m})$. To define the $v_i$ choose $j'$ such that $j'im \equiv 1 \bmod p$ and set $v_iu_au_b^{-rj'}$. Then a similar computation as before shows that $v_i$ centralizes $e_if_i$ and we have $v_i^p = \zeta^i \lambda \mu^{-mrj'} f_i$. Note that $\mu^{-mrj'} \equiv \mu^{-rj} \bmod (F^*)^p$. Hence $F(\sqrt[p]{\zeta^i \lambda \mu^{-rj}}) = F(\sqrt[p]{\zeta^i \lambda \mu^{-mrj'}})$ and we conclude that indeed $F^{\alpha_8}G_9 \cong F^{\alpha_{10}}G_{10}$. 
\end{proof}

\section{Dade's examples and proof of Theorem 3}\label{Dade}
In 1971 Dade answered a question of Brauer by describing a series of finite groups $G$ and $H$ such that $FG \cong FH$ for all fields $F$ \cite{Dade}. In \cite[Section 5]{MargolisSchnabel2} we studied the (TGRIP) for the examples of Dade and could show for a subclass of these examples that there exists a finite field $F$ such that $G \not\sim_F H$. Our goal here will be to show that in fact $G \not\sim_\mathbb{Q} H$ for any of Dade's examples, i.e. to prove Theorem 3. 

Let us first describe the groups of Dade. Let $p$ and $q$ be primes such that $q \equiv 1 \mod p^2$ and let $w$ be an integer such that $w \not \equiv 1 \mod q^2$, but $w^p \equiv 1 \mod q^2$. Let $Q_1$ and $Q_2$ be the following two non-abelian groups of order $q^3$.
\begin{align*}
Q_1 &= (\langle \tau_1 \rangle \times \langle \sigma_1 \rangle) \rtimes \langle \rho_1 \rangle, \\
Q_2 &= \langle \sigma_2 \rangle \rtimes \langle \rho_2 \rangle, \\
\tau_1^q &= \sigma_1^q = \rho_1^q = \sigma_2^{q^2} = \rho_2^q = 1, \ \sigma_2^q =: \tau_2, \\
\tau_1^{\rho_1} &= \tau_1, \ \sigma_1^{\rho_1} = \tau_1\sigma_1, \ \sigma_2^{\rho_2} = \tau_2\sigma_2.
\end{align*}
So $Q_1$ and $Q_2$ are just the two non-abelian groups of order $q^3$ such that $Q_1$ has exponent $q$. These are exactly the groups for which we studied the (TGRIP) in Section~\ref{sec:p^3} and the difference between the groups we encountered in Theorem 1 is going to be crucial here.

Let $\langle \pi_1 \rangle \cong C_{p^2}$, $\langle \pi_2 \rangle \cong C_p$ and for $i,j \in \{1,2 \}$ let
$$\rho_i^{\pi_j} = \rho_i, \ \sigma_i^{\pi_j} = \sigma_j^w, \ \tau_i^{\pi_j} = \tau_i^w.$$
Define two groups by
\begin{align*}
G &= (Q_1 \rtimes \langle \pi_1 \rangle) \times (Q_2 \rtimes \langle \pi_2 \rangle), \\
H &= (Q_1 \rtimes \langle \pi_2 \rangle) \times (Q_2 \rtimes \langle \pi_1 \rangle).
\end{align*}
These are the groups constructed by Dade as a counterexample to Brauer's question and we will fix them throughout this section.

Notice that $G=G_1\times G_2$ and $H=H_1\times H_2$ for
$$G_1=Q_1 \rtimes \langle \pi_1 \rangle,\quad G_2=Q_2 \rtimes \langle \pi_2 \rangle,\quad
 H_1=Q_1 \rtimes \langle \pi_2 \rangle, \quad H_2=Q_2 \rtimes \langle \pi_1 \rangle.$$

The Schur multipliers of $G$ and $H$ are described in \cite[Proposition 5.3]{MargolisSchnabel2}, but it will turn out that they are in fact not needed to obtain $G \not\sim_\mathbb{Q} H$. Note that, using the bar notation for taking elements modulo the commutator subgroup, for $i \in \{1,2 \}$ we have 
\[ G_i/G_i' = \langle \overline{\rho_i} \rangle \times \langle \overline{\pi_i} \rangle\]
and 
\[ H_i/H_i' = \langle \overline{\rho_i} \rangle \times \langle \overline{\pi_j} \rangle\]
where $j \in  \{1,2 \}$ such that $i \neq j$. Let $\lambda \in F^*$. Abusing notation we define a cohomology class $[\alpha_\lambda]$ in $H^2(G_1,F^*)$ and $H^2(H_1,F^*)$ and a cohomology class $[\beta_\lambda]$ in $H^2(G_2, F^*)$ and $H^2(H_2,F^*)$. These cohomology classes are in the image of $\Ext(G_i/G_i', F^*)$ and $\Ext(H_i/H_i', F^*)$ under the inflation map, respectively, and are described by the relations
\[[\alpha_\lambda]: u_{\rho_1}^q = \lambda \ \ \text{and} \ \ [\beta_\lambda]: u_{\rho_2}^q = \lambda. \]
We will also write simply $[\alpha_\lambda]$ and $[\beta_\lambda]$ when we restrict these cohomology classes to subgroups of $G$ and $H$ which contain $\rho_1$ or $\rho_2$ respectively.

\textbf{Notation:} For a positive integer $n$ let $\zeta_n$ denote a primitive complex $n$-th root of unity. Let $F$ be a field of characteristic $0$ such that $F \cap \Q(\zeta_{q^2}) = \Q$. We denote $\xi = \sum_{i=0}^{p-1} \zeta_q^{wi}$. Moreover, we denote $\delta_{p} \in \Gal(F(\zeta_q)/F(\xi))$, explained by $\delta_p(\zeta_q) = \zeta_q^w$, and $\delta_{pq} \in \Gal(F(\zeta_{q^2})/F(\xi))$, explained by $\delta_{pq}(\zeta_{pq}) = \zeta_{q^2}^{(q+1)w}$.

We first determine the isomorphism types of the twisted group algebras of those groups which form Dades's examples. 

\begin{lemma}\label{lem:DadePartsIsoTypesQ}
 Let $\lambda \in \mathbb{Q}$. Assume $\lambda \not\in (\Q^*)^q$ and set $z = \sqrt[q]{\lambda}$. With the cohomology classes defined above the following isomorphisms hold {\small
\begin{align*}
&\Q  H_1 \cong \Q G_2 \cong \Q \oplus \Q(\zeta_p) \oplus \Q(\zeta_q) \oplus \Q(\zeta_p, \zeta_q) \oplus M_p(\Q(\xi)) \oplus \frac{q-1}{p}M_p(\Q(\zeta_q)) \oplus M_{pq}(\Q(\xi)), \\
&\Q^{\alpha_\lambda} H_1 \cong \Q(z) \oplus \Q(z,\zeta_p) \oplus M_p(\Q(z,\xi)) \oplus M_{pq}(\Q(\xi)), \\
&\Q^{\beta_\lambda} G_2 \cong \Q(z) \oplus \Q(z,\zeta_p) \oplus M_p(\Q(z,\xi)) \oplus (\lambda, \Q(\zeta_{q^2})/\Q(\xi), \delta_{pq}), \\
&\Q  G_1 \cong \Q H_2 \cong \Q H_1 \oplus \Q(\zeta_{p^2}) \oplus \Q(\zeta_q, \zeta_{p^2}) \oplus (\zeta_p, \Q(\zeta_q, \zeta_p)/\Q(\xi, \zeta_p), \delta_p) \oplus   \frac{q-1}{p}M_p(\Q(\zeta_q, \zeta_p)) \\
& \ \ \ \ \ \ \ \ \  \ \ \ \ \ \ \ \ \ \ \ \ \ \ \  \  \oplus M_{q}((\zeta_p, \Q(\zeta_q, \zeta_p)/\Q(\xi, \zeta_p), \delta_p)), \\
&\Q^{\alpha_\lambda} G_1 \cong \Q^{\alpha_\lambda} H_1 \oplus \Q(z,\zeta_{p^2}) \oplus (\zeta_p, \Q(\zeta_q, z, \zeta_p)/\Q(\xi, z, \zeta_p), \delta_p) \oplus M_q((\zeta_p, \Q(\zeta_q, \zeta_p)/\Q(\xi, \zeta_p), \delta_p)), \\
&\Q^{\beta_\lambda} H_2 \cong \Q^{\beta_\lambda} G_2 \oplus \Q(z,\zeta_{p^2}) \oplus (\zeta_p, \Q(\zeta_q, z, \zeta_p)/\Q(\xi, z, \zeta_p), \delta_p) \oplus (\zeta_p \lambda, \Q(\zeta_{q^2}, \zeta_p)/\Q(\xi, \zeta_p),\delta_{pq}). 
\end{align*}}
\end{lemma}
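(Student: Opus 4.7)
The plan is a two-stage reduction along the lines of the proof of Theorem~1 in Section~\ref{sec:p^3}. First, since $w^p\equiv 1\bmod q^2$, the subgroup $A := \langle\pi_1^p\rangle \cong C_p$ centralises both $Q_1$ and $Q_2$, so it is central in $G_1$ and in $H_2$; the natural identifications $G_1/A \cong H_1$ and $H_2/A \cong G_2$ send $\bar\pi_1$ to $\pi_2$. By construction the cocycles $\alpha_\lambda$ and $\beta_\lambda$ are inflated from these quotients. Since $\Irr(A,\Q)$ consists of the trivial character and a single Galois orbit of faithful characters realised over $\Q(\zeta_p)$, Theorem~\ref{th:NEWINFSTUFF} will produce
\[
\Q^{\alpha_\lambda}G_1 \cong \Q^{\alpha_\lambda}H_1 \oplus \Q(\zeta_p)^{(\alpha_\lambda)_\chi}H_1,\qquad \Q^{\beta_\lambda}H_2 \cong \Q^{\beta_\lambda}G_2 \oplus \Q(\zeta_p)^{(\beta_\lambda)_\chi}G_2,
\]
together with the $\lambda=1$ analogues. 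A direct cocycle calculation using the section $\pi_2\mapsto\pi_1$ will identify $(\alpha_\lambda)_\chi$ on $H_1$ as $\alpha_\lambda$ supplemented by the relation $u_{\pi_2}^p = \zeta_p$, and analogously for $(\beta_\lambda)_\chi$ on $G_2$. This reduces the lemma to describing four twisted group algebras on $H_1$ and on $G_2$.

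For $H_1$ I would write $H_1 = N_1\rtimes T$ with $N_1 := \langle\tau_1,\sigma_1\rangle \cong C_q\times C_q$ and $T := \langle\rho_1,\pi_2\rangle \cong C_q\times C_p$, both abelian, noting that the cocycle is trivial on $N_1$. The Wedderburn components of $\Q^{\alpha_\lambda}H_1$ are then indexed by Galois-orbits of $T$-orbits of characters of $N_1$. Three types of orbits appear: the trivial character contributes the twisted algebra $\Q^{\alpha_\lambda}T$, which by Section~\ref{sec:CpxCp} equals $\Q(z)\oplus\Q(z,\zeta_p)$; the non-trivial characters of $\langle\tau_1\rangle$, fixed by $\rho_1$ but permuted in orbits of size $p$ by $\pi_2$ (since $w$ has order $p$ modulo $q$), induce from the stabiliser $Q_1$ to yield $(q-1)/p$ Galois-equivalent blocks $M_p(\Q(\zeta_q))$; the characters non-trivial on $\sigma_1$ have trivial $T$-stabiliser and after collecting Galois orbits contribute $M_{pq}(\Q(\xi))$ untwisted and $M_p(\Q(z,\xi))$ twisted, with $\Q(\xi)$ arising as the fixed field of $\langle\delta_p\rangle$ in $\Q(\zeta_q)$. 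Summing gives the stated decompositions of $\Q H_1$ and $\Q^{\alpha_\lambda}H_1$. Rerunning the analysis with the added relation $u_{\pi_2}^p=\zeta_p$ yields $\Q(\zeta_p)^{(\alpha_\lambda)_\chi}H_1$; the only meaningful change is that the blocks of dimension $p$ over $\Q(\zeta_q,\zeta_p)$ are replaced by the cyclic algebra $(\zeta_p,\Q(\zeta_q,\zeta_p)/\Q(\xi,\zeta_p),\delta_p)$ and its matrix inflation.

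The same Clifford method applies to $G_2 = \langle\sigma_2\rangle\rtimes T$, but now $\langle\sigma_2\rangle$ is cyclic of order $q^2$. The only orbits behaving differently from those of $H_1$ are the Galois orbits of characters of $\langle\sigma_2\rangle$ that are non-trivial on $\langle\tau_2\rangle = \langle\sigma_2^q\rangle$: their character field is $\Q(\zeta_{q^2})$, their $T$-stabiliser is exactly $\langle\rho_2\rangle$, and the induced Wedderburn block becomes a cyclic algebra over $\Q(\xi) = \Q(\zeta_{q^2})^{\langle\delta_{pq}\rangle}$ whose defining parameter is $u_{\rho_2}^q$. Under $\beta_\lambda$ this equals $\lambda$, so the block is $(\lambda,\Q(\zeta_{q^2})/\Q(\xi),\delta_{pq})$, replacing the matrix algebra $M_{pq}(\Q(\xi))$ seen for $H_1$. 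All other summands match those of $\Q^{\alpha_\lambda}H_1$; twisting by $(\beta_\lambda)_\chi$ then changes the parameter from $\lambda$ to $\zeta_p\lambda$, producing the cyclic algebra $(\zeta_p\lambda,\Q(\zeta_{q^2},\zeta_p)/\Q(\xi,\zeta_p),\delta_{pq})$ in $\Q^{\beta_\lambda}H_2$. The main obstacle will be the careful bookkeeping: identifying the $\pi_2$-action with $\delta_p$ and $\delta_{pq}$ on the respective cyclotomic fields, and verifying that the inflated cocycles yield precisely the stated cyclic-algebra parameter. This is the direct analogue, with $q^3$ in place of $p^3$, of the dichotomy between the two non-abelian groups of order $p^3$ exploited in the proof of Theorem~1, and is ultimately what will distinguish $G$ from $H$ in Theorem~3.
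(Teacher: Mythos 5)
Your two-stage plan is viable, and its first stage is genuinely different from the paper's route: the paper does not pass through Theorem~\ref{th:NEWINFSTUFF} here, but treats $G_1$ and $H_2$ directly by the same central-idempotent argument it uses for $H_1$ and $G_2$ (cutting with $e=\frac{1}{q}(1+\tau_i+\cdots+\tau_i^{q-1})$, applying Lemma~\ref{lem:PassmanMatrices} to the $(1-e)$-part, and identifying the remaining block with a cyclic algebra by exhibiting a faithful representation and counting dimensions), absorbing the difference between $\pi_1$ and $\pi_2$ into the decomposition of $\Q(\langle\sigma_1\rangle\rtimes\langle\pi_1\rangle)$. Your reduction of $G_1$ to $H_1$ and of $H_2$ to $G_2$ via the central subgroup $\langle\pi_1^p\rangle$, with the extra summand over $\Q(\zeta_p)$ carrying the relation $u_{\pi_2}^p=\zeta_p$, is correct and arguably cleaner; it explains at a glance why $\Q^{\alpha_\lambda}G_1$ contains $\Q^{\alpha_\lambda}H_1$ as a direct summand.

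The gap is in the Clifford bookkeeping of the second stage, which as written attaches the blocks to the wrong orbits. Since $\sigma_1^{\rho_1}=\tau_1\sigma_1$ and $\tau_1^{\rho_1}=\tau_1$, a character $\chi$ of $N_1$ satisfies $\chi^{\rho_1}=\chi$ if and only if $\chi(\tau_1)=1$; so it is the characters \emph{trivial on $\tau_1$ and non-trivial on $\sigma_1$} that have stabiliser $Q_1$ and $\pi_2$-orbits of length $p$, and these produce $M_p(\Q(\xi))\oplus\frac{q-1}{p}M_p(\Q(\zeta_q))$ untwisted and $M_p(\Q(z,\xi))$ under $\alpha_\lambda$ (your accounting also omits the $M_p(\Q(\xi))$ summand of $\Q H_1$ entirely). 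The characters non-trivial on $\tau_1$ have trivial $T$-stabiliser and orbits of length $pq$, and they give the single block $M_{pq}(\Q(\xi))$, which is unaffected by the twist because the corner algebra does not involve $\rho_1$. Note also that ``trivial on $\sigma_1$'' is not a $T$-stable condition, so your partition of the characters of $N_1$ is not a partition into unions of $T$-orbits. The analogous slip occurs for $G_2$: a faithful character $\chi$ of $\langle\sigma_2\rangle$ satisfies $\chi^{\rho_2}=\chi^{(q+1)^{\pm1}}\neq\chi$, so its $T$-stabiliser is trivial, not $\langle\rho_2\rangle$, and its orbit has length $pq$; this is precisely why the corresponding block is the degree-$pq$ cyclic algebra $(\lambda,\Q(\zeta_{q^2})/\Q(\xi),\delta_{pq})$ with $[\Q(\zeta_{q^2}):\Q(\xi)]=pq$, rather than $M_p$ of a degree-$q$ algebra, which is what your stabiliser claim would force. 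These errors are repairable, but since the entire content of the lemma is this bookkeeping, and Theorem~3 is proved by comparing exactly these components, they must be corrected before the argument stands.
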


\begin{proof}
We will not prove the isomorphisms of the non-twisted group algebras, as these can be derived by standard methods, e.g. this can be achieved using the theory of Shoda pairs as presented in \cite[Chapter 3]{JespersdelRio}.

Note that by assumption $z \notin \Q$. Set $e = \frac{1}{q}(1+\tau_1+\ldots+\tau_1^{q-1})$ which is a central idempotent in $\Q^{\alpha_\lambda} H_1$. So we have
\begin{equation}\label{eq:QalphalambdaH1}
\Q^{\alpha_\lambda} H_1 \cong e\Q^{\alpha_\lambda} H_1e \oplus (1-e)\Q^{\alpha_\lambda} H_1(1-e). 
\end{equation}
Now 
\[e\Q^{\alpha_\lambda} H_1e \cong \Q^{\alpha_\lambda} ( \langle \sigma_1 \rangle \rtimes \langle \pi_2 \rangle \times \langle \rho_1 \rangle) \cong \Q(\langle \sigma_1 \rangle \rtimes \langle \pi_2 \rangle) \otimes \Q^{\alpha_\lambda}\langle \rho_1 \rangle.	 \]
While $\Q^{\alpha_\lambda} \langle \rho_1 \rangle \cong \Q(z)$, the isomorphism type of the untwisted group algebra is given by
\begin{equation}\label{eq:SigmaPi2}
\Q(\langle \sigma_1 \rangle \rtimes \langle \pi_2 \rangle) \cong \Q \oplus \Q(\zeta_p) \oplus M_p(\Q(\xi)). 
\end{equation}
Hence, we obtain 
\[e\Q^{\alpha_\lambda} H_1e \cong \Q(z) \oplus \Q(z,\zeta_p) \oplus M_p(\Q(z,\xi)).\]

So by \eqref{eq:QalphalambdaH1} it remains to consider the algebra $(1-e)\Q^{\alpha_\lambda} H_1(1-e)$ and to show that it is isomorphic to $M_{pq}(\Q(\xi))$.
Let $f = \frac{1}{q}(1+\sigma_1+\ldots+\sigma_1^{q-1})$, so $(1-e)f(1-e)$ is an idempotent. The idempotents 
\[(1-e)f(1-e),(1-e)f^{\rho_1}(1-e),\ldots,(1-e)f^{\rho^{q-1}}(1-e)\]
are then pairwise orthogonal and sum up to $(1-e)$. Hence by Lemma~\ref{lem:PassmanMatrices} we have 
\[(1-e)\Q^{\alpha_\lambda} H_1(1-e) \cong M_q((1-e)f\Q^{\alpha_\lambda} H_1f(1-e)).\]
As $f\rho_1f = 0$, we have 
\[(1-e)f\Q^{\alpha_\lambda} H_1f(1-e) \cong (1-e) \Q(\langle \tau_1 \rangle \rtimes \langle \pi_2 \rangle) (1-e).\]

By \eqref{eq:SigmaPi2} this algebra is isomorphic to $M_p(\Q(\xi))$, where we use that\\ $\langle \sigma_1 \rangle \rtimes \langle \pi_2 \rangle \cong \langle \tau_1 \rangle \rtimes \langle \pi_2 \rangle$. Overall we obtain the claimed isomorphism type of $\Q^{\alpha_\lambda} H_1$.

Now set $e = \frac{1}{q}(1+\tau_2+\ldots+\tau_2^{q-1})$. Similarly as before we have
\begin{align*}
&\Q^{\beta_\lambda} G_2 \cong e\Q^{\beta_\lambda} G_2e \oplus (1-e)\Q^{\beta_\lambda} G_2(1-e)\\ 
&\cong \Q(z) \oplus \Q(z,\zeta_p) \oplus M_p(z,\xi) \oplus (1-e)\Q^{\beta_\lambda} G_2(1-e) 
\end{align*}
and we need to prove that
\[(1-e)\Q^{\beta_\lambda} G_2(1-e) \cong (\lambda, \Q(\zeta_{q^2})/\Q(\xi), \delta_{pq}). \]
Let $a \in (\lambda, \Q(\zeta_{q^2})/\Q(\xi), \delta_{pq})$ such that $a^n = \lambda$ and $ \mu a = a \delta_{pq}(\mu)$ for all $\mu \in \Q(\zeta_{q^2})$. The algebra $(1-e)\Q^{\beta_\lambda} G_2(1-e)$ has a faithful representation in $(\lambda, \Q(\zeta_{q^2})/\Q(\xi), \delta_{pq})$ by sending $\sigma_2$ to $\zeta_{q^2}$ and $\rho_2 \pi_2$ to $a$. As both these algebras have the same dimension, we conclude that they are in fact isomorphic.

The isomorphism types of $\Q^{\alpha_\lambda} G_1$ and $\Q^{\beta_\lambda} H_2 $ follow by similar arguments and using 
\begin{equation*}
 \Q(\langle \sigma_1 \rangle \rtimes \langle \pi_1 \rangle)\cong \Q(\langle \sigma_1 \rangle \rtimes \langle \pi_2 \rangle) \oplus \Q(\zeta_{p^2}) \oplus (\zeta_p, \Q(\zeta_q, \zeta_p)/\Q(\xi, \zeta_p), \delta_p)
\end{equation*}
 where we used \eqref{eq:SigmaPi2} before. 
\end{proof}

We will also use another field than $\Q$ to simplify our argument later.
\begin{corollary}\label{cor:DadePartsIsoTypesF}
Let $F = \Q(\zeta_q, \zeta_p)$, $\lambda \in \Q$ and assume $\lambda \not\in (F^*)^q$. Set $z = \sqrt[q]{\lambda}$. With the cohomology classes described above the following isomorphisms hold
\begin{align*}
&F H_1 \cong F G_2 \cong pqF \oplus  \frac{q(q-1)}{p} M_p(F) \oplus \frac{q-1}{p}M_{pq}(F), \\
&F^{\alpha_\lambda} H_1 \cong pF(z) \oplus \frac{q-1}{p} M_p(F(z)) \oplus \frac{q-1}{p} M_{pq}(F), \\
&F^{\beta_\lambda} G_2 \cong pF(z) \oplus \frac{q-1}{p} M_p(F(z)) \oplus (q-1)p (\lambda, \zeta_q; F; \zeta_q)_q, \\
&FG_1 \cong FH_2 \cong pqF \oplus q(p-1)F(\zeta_{p^2}) \oplus q(q-1)M_p(F) \oplus (q-1)M_{pq}(F), \\
&F^{\alpha_\lambda} G_1 \cong pF(z) \oplus (p-1)F(z,\zeta_{p^2}) \oplus (q-1)M_p(F(z)) \oplus (q-1)M_{pq}(F), \\
&F^{\beta_\lambda} H_2 \cong pF(z) \oplus (p-1)F(z,\zeta_{p^2}) \oplus (q-1)M_p(F(z)) \oplus p^2(q-1)( \lambda, \zeta_q;F;\zeta_q)_q.
\end{align*}
\end{corollary}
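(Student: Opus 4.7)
The plan is to extend scalars from $\Q$ to $F=\Q(\zeta_p,\zeta_q)$ in each Wedderburn component appearing in Lemma~\ref{lem:DadePartsIsoTypesQ}. Since $F/\Q$ is Galois with group $(\Z/p\Z)^{*}\times(\Z/q\Z)^{*}$ and already contains each of $\Q(\zeta_p)$, $\Q(\zeta_q)$, $F$ itself and $\Q(\xi)$, for any subfield $L\subseteq F$ one has $L\otimes_\Q F\cong [L:\Q]\cdot F$, and for the matrix components $M_n(L)\otimes_\Q F\cong M_n(L\otimes_\Q F)$. The non-split fields are $\Q(z)$, $\Q(z,\zeta_p)$ and $\Q(z,\zeta_{p^2})$: the hypothesis $\lambda\notin(F^*)^q$ together with $\zeta_q\in F$ forces $X^q-\lambda$ to remain irreducible over $F$, while $\Phi_p$ splits completely, so that $\Q(z)\otimes_\Q F\cong F(z)$, $\Q(z,\zeta_p)\otimes_\Q F\cong(p-1)F(z)$ and similarly $\Q(z,\zeta_{p^2})\otimes_\Q F\cong(p-1)F(z,\zeta_{p^2})$; the field $\Q(\zeta_{p^2})$ meets $F$ in $\Q(\zeta_p)$, giving $\Q(\zeta_{p^2})\otimes_\Q F\cong(p-1)F(\zeta_{p^2})$. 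Summing all these contributions and doing a direct dimension check against $|G_i|=|H_i|$ produces the coefficients of $F$, $F(\zeta_{p^2})$, $F(z)$, $F(z,\zeta_{p^2})$, $M_p(F)$, $M_p(F(z))$ and $M_{pq}(F)$ in the statement.

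The main technical step is the treatment of the cyclic algebra summands, namely $(\lambda,\Q(\zeta_{q^2})/\Q(\xi),\delta_{pq})$ of $\Q^{\beta_\lambda}G_2$ and $\Q^{\beta_\lambda}H_2$, together with $(\zeta_p,\Q(\zeta_q,\zeta_p)/\Q(\xi,\zeta_p),\delta_p)$ and the analogous twist in $\Q^{\alpha_\lambda}G_1$ and $\Q^{\beta_\lambda}H_2$. For such a cyclic algebra $A$ of center $L\subseteq F$, the decomposition $L\otimes_\Q F\cong\prod_{\sigma:L\hookrightarrow F}F$ yields $A\otimes_\Q F\cong\bigoplus_\sigma(A\otimes_{L,\sigma}F)$, each summand central simple over $F$. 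I would analyse each summand by applying Proposition~\ref{prop:CyclicAlgebra} in two stages: first extend scalars from $L$ to the maximal subfield of the cyclic extension that lies inside $F$ (namely $\Q(\zeta_q)$, or $\Q(\zeta_q,\zeta_p)$ in the $\delta_p$-case), which picks up a matrix factor and leaves a cyclic algebra of strictly smaller degree; then further extend to $F$, which by linear disjointness of $F$ and the cyclic extension over the intermediate subfield produces a cyclic algebra over $F$ whose splitting field is the Kummer extension $F(\zeta_{q^2})=F(\sqrt[q]{\zeta_q})$ (respectively $F(\sqrt[p]{\zeta_p})$). Rewriting the Galois generator that arises in this way as multiplication by a root of unity via the congruence $((q+1)w)^{p}\equiv 1+pq\pmod{q^2}$ and translating into the symbol algebra convention of Section~\ref{sec:Algebras}, each such summand is identified with a symbol algebra of the form $(\lambda,\zeta_q;F;\zeta_q)_q$ or $(\zeta_p\lambda,\zeta_q;F;\zeta_q)_q$ after suitable rearrangement.

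The main obstacle will be the last bookkeeping step: the two applications of Proposition~\ref{prop:CyclicAlgebra} produce a cyclic algebra whose Galois generator is an explicit power of $\delta_{pq}$ modulo $q^2$, which does not match the generator of the target symbol algebra on the nose, and the isomorphism and norm identities of Lemma~\ref{lem:SymbolAlgebras} must be used to absorb the resulting Brauer-equivalent powers into the displayed multiplicities so that the final decomposition agrees with the stated one.
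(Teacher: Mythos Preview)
Your approach is essentially the paper's: extend scalars from $\Q$ to $F=\Q(\zeta_p,\zeta_q)$ in each Wedderburn component of Lemma~\ref{lem:DadePartsIsoTypesQ}, treating the commutative pieces by counting embeddings and the cyclic algebras via Proposition~\ref{prop:CyclicAlgebra}, then using Lemma~\ref{lem:SymbolAlgebras} for the final identification. The paper carries this out in exactly the same way and, like you, singles out the cyclic algebras as the only part needing comment.

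There is one slip in your handling of the $\delta_p$-algebras. The cyclic algebra $(\zeta_p,\Q(\zeta_q,\zeta_p)/\Q(\xi,\zeta_p),\delta_p)$ has as the top of its defining extension the field $\Q(\zeta_q,\zeta_p)=F$ itself. Thus when you pass to the ``maximal subfield of the cyclic extension that lies inside $F$'' you have already reached $F$, and Proposition~\ref{prop:CyclicAlgebra} immediately gives Brauer equivalence with $(\zeta_p,F/F,\id)\cong F$; there is no residual symbol algebra and no splitting field $F(\sqrt[p]{\zeta_p})$ to speak of. The paper states this in one line: since $[F:\Q(\xi,\zeta_p)]=p$, the algebra becomes $M_p(F)$ after tensoring (and its $z$-variants become $M_p(F(z))$). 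The same remark applies to $M_q((\zeta_p,\Q(\zeta_q,\zeta_p)/\Q(\xi,\zeta_p),\delta_p))$.

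For the $\delta_{pq}$-algebras your two-stage reduction is correct, and the paper does the same, writing the result as $(\lambda,\Q(\zeta_{q^2})/F,\delta_q)=(\lambda,\zeta_q;F;\zeta_q)_q$ directly. The last identification you anticipate---that $(\zeta_p\lambda,\zeta_q;F;\zeta_q)_q\cong(\lambda,\zeta_q;F;\zeta_q)_q$---is done in the paper by the explicit norm computation $\Nr_{\Q(\zeta_{q^2})/F}(\zeta_p)=\zeta_p^{\,q}$, so $\zeta_p\in\Nr(F(\zeta_{q^2})/F)$ and Lemma~\ref{lem:SymbolAlgebras}(5) applies; you may want to make this step explicit rather than leave it as unspecified ``bookkeeping''.
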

\begin{proof}

The claimed isomorphism types follow from Lemma~\ref{lem:DadePartsIsoTypesQ} by tensoring with $F$. We will just comment on how the cyclic algebras appearing in Lemma~\ref{lem:DadePartsIsoTypesQ} behave under this change of coefficients. As $[F:\Q(\xi, \zeta_p)] = p$, by Proposition~\ref{prop:CyclicAlgebra} we know that $F \otimes (\zeta_p, \Q(\zeta_q, \zeta_p)/\Q(\xi, \zeta_p), \delta_p)$ is Brauer equivalent to $(\zeta_p, F/F, \text{id})$ which is isomorphic to $F$. So 
\[F \otimes (\zeta_p, \Q(\zeta_q, \zeta_p)/\Q(\xi, \zeta_p), \delta_p) \cong \frac{(q-1)(p-1)}{p}M_p(F).\]

Moreover, also by Proposition~\ref{prop:CyclicAlgebra}, $F \otimes (\lambda, \Q(\zeta_{q^2})/\Q(\xi), \delta_{pq})$ is Brauer equivalent to $(\lambda, \Q(\zeta_{q^2})/F, \delta_q)$ where $\delta_q$ is a generating element of $\Gal(\Q(\zeta_{q^2})/F))$. The latter algebra is just the symbol algebra $(\lambda, \zeta_q;F;\zeta_q)_q$. Finally in the same manner 
\[F \otimes (\lambda, \Q(\zeta_{q^2})/\Q(\xi), \delta_{pq}) \equiv (\zeta_p \lambda, \zeta_q;F;\zeta_q)_q.\]
Now $\Nr_{\Q(\zeta_{q^2})/F}(\zeta_p) = \zeta_p^q$, so $\zeta_p \in \Nr(\Q(\zeta_{q^2})/F)$. We conclude from Lemma~\ref{lem:SymbolAlgebras} that 
\[(\zeta_p \lambda, \zeta_q;F;\zeta_q)_q \cong (\lambda, \zeta_q;F;\zeta_q)_q.\]
\end{proof}

In order to prove Theorem 3 we will use the following lemma.

\begin{lemma}\label{lem:NormDade}
There exists a $\lambda \in F^*$ such that $(\lambda, \zeta_q;F;\zeta_q)_q$ is not Brauer equivalent to a field. 
\end{lemma}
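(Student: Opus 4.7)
By Lemma~\ref{lem:SymbolAlgebras}(2), the symbol algebra $(\lambda, \zeta_q; F; \zeta_q)_q$ is split (equivalently, Brauer equivalent to $F$) if and only if $\zeta_q \in \Nr(F(\sqrt[q]{\lambda})/F)$; since $q$ is prime the only alternative is that the algebra is a division algebra of degree $q$, so it is then certainly not Brauer equivalent to a field. Hence it suffices to produce $\lambda \in F^*$ with $\zeta_q \notin \Nr(F(\sqrt[q]{\lambda})/F)$.

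The plan is to recycle the $\lambda$ already supplied by Lemma~\ref{lem:Norm}: pick $\lambda \in \Q^*$ with $\zeta_q \notin \Nr(L'/\Q(\zeta_q))$, where $L' := \Q(\zeta_q)(\sqrt[q]{\lambda})$, and set $L := F(\sqrt[q]{\lambda}) = L' \cdot F$. The hypothesis $q \equiv 1 \bmod p^2$ forces $q > p-1$, so $\gcd(p-1, q) = 1$. As $F/\Q(\zeta_q)$ is cyclotomic of degree $p-1$ and $L'/\Q(\zeta_q)$ is a nontrivial Kummer extension of prime degree $q$, the coprimality of these degrees yields linear disjointness of $F$ and $L'$ over $\Q(\zeta_q)$. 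Consequently $L' \cap F = \Q(\zeta_q)$, $[L:F] = q$, and $[L:L'] = p-1$.

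Now suppose for contradiction that $\zeta_q = \Nr_{L/F}(y)$ for some $y \in L^*$. Applying $\Nr_{F/\Q(\zeta_q)}$ to both sides and using transitivity of norms through the two towers $L \supset F \supset \Q(\zeta_q)$ and $L \supset L' \supset \Q(\zeta_q)$ one obtains
\begin{align*}
\zeta_q^{p-1} \;=\; \Nr_{F/\Q(\zeta_q)}(\zeta_q) \;=\; \Nr_{L/\Q(\zeta_q)}(y) \;=\; \Nr_{L'/\Q(\zeta_q)}\!\bigl(\Nr_{L/L'}(y)\bigr).
\end{align*}
Thus $\zeta_q^{p-1} \in \Nr(L'/\Q(\zeta_q))$, and choosing $m \in \Z$ with $m(p-1) \equiv 1 \bmod q$ and raising to the $m$-th power gives $\zeta_q \in \Nr(L'/\Q(\zeta_q))$, contradicting the choice of $\lambda$ via Lemma~\ref{lem:Norm}. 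Therefore $\zeta_q \notin \Nr(L/F)$ and the lemma follows.

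The principal obstacle --- producing a local obstruction to the norm equation --- has already been overcome in Lemma~\ref{lem:Norm}; the only additional step here is the routine norm-tower bookkeeping above, which works cleanly precisely because $\gcd(p-1,q) = 1$.
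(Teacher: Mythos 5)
Your proof is correct and follows the same route as the paper: reduce to the norm condition via Lemma~\ref{lem:SymbolAlgebras}(2), then invoke Lemma~\ref{lem:Norm}. The one substantive difference is your norm-tower argument transferring $\zeta_q \notin \Nr\bigl(\Q(\zeta_q)(\sqrt[q]{\lambda})/\Q(\zeta_q)\bigr)$ up to the larger field $F = \Q(\zeta_q,\zeta_p)$: the paper simply cites Lemma~\ref{lem:Norm} as if it applied directly to $F$, even though that lemma is stated only for the base field $\Q(\zeta_q)$. Your bookkeeping --- using $q \equiv 1 \bmod p^2$ to get $\gcd(p-1,q)=1$, hence linear disjointness, and then inverting $p-1$ modulo $q$ to recover $\zeta_q$ from $\zeta_q^{p-1}$ in the norm group --- closes this small gap cleanly, so your argument is if anything more complete than the paper's.
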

\begin{proof}
By Lemma~\ref{lem:SymbolAlgebras} we have $(\lambda, \zeta_q;F;\zeta_q)_q \equiv 1$ if and only if $\zeta_q \in \Nr(F(z)/F)$. By Lemma~\ref{lem:Norm} however, there exists $\lambda \in \Q^*$ such that $\zeta_q \notin \Nr(F(z)/F)$. Thus, the proof is complete. 
\end{proof}

\textit{Proof of Theorem 3:} Let $\lambda \in F$ such that $(\lambda, \zeta_q;F;\zeta_q)_q$ is not Brauer equivalent to a field, i.e. $(\lambda, \zeta_q;F;\zeta_q)_q$ is not isomorphic to a matrix ring over a field. Such a $\lambda$ exists by Lemma~\ref{lem:NormDade}. We claim that there exists no $[\gamma] \in H^2(G, \Q^*)$ such that $\Q^\gamma G \cong \Q^{\alpha_\lambda} H$. If such a $[\gamma]$ exists, then also $F^\gamma G \cong F^{\alpha_\lambda} H$, so it will suffice to prove that $F^\gamma G \not \cong F^{\alpha_\lambda} H$ for any $[\gamma]$. So assume for a contradiction that $F^\gamma G \cong F^{\alpha_\lambda} H$ for some $[\gamma] \in H^2(G, F^*)$. 

By Corollary~\ref{cor:DadePartsIsoTypesF} we have that $F^{\alpha_\lambda} H \cong F^{\alpha_\lambda} H_1 \otimes FH_2$ is isomorphic to
\begin{align*}
&\left( pF(z) \oplus \frac{q-1}{p} M_p(F(z)) \oplus \frac{q-1}{p} M_{pq}(F) \right) \\
 & \ \ \  \ \ \ \ \ \ \ \otimes \left( pqF \oplus q(p-1)F(\zeta_{p^2}) \oplus q(q-1)M_p(F) \oplus (q-1)M_{pq}(F)\right) . 
\end{align*}

We first note that the Wedderburn decomposition of $F^{\alpha_\lambda}H$ consists solemnly of components isomorphic to matrix rings over fields. Moreover it contains the commutative component $F(z)$. It follows from Proposition~\ref{prop:SalatimTheorem} that $[\gamma]$ is in the image of the inflation map of $\Ext(G/G',F^*)$ in $H^2(G,F^*)$. Recall that
\[G/G' = \langle \overline{\rho_1} \rangle \times \langle \overline{\rho_2} \rangle \times \langle \overline{\pi_1} \rangle \times \langle \overline{\pi_2} \rangle.  \]
So $\Ext(G/G', F^*)$ has exponent $pq$. Assume that the order of $[\gamma]$ is divisible by $p$. Then there exists a $\mu \in F^*$ such that $\mu \notin (F^*)^p$ and we could assume that in $F^\gamma G$ one has $u_{\rho_1}^p = \mu$ or $u_{\rho_2}^p = \mu$. Then $F^\gamma G$ would contain a Wedderburn component which is a field containing $\sqrt[p]{\mu}$. This is not the case. Hence $[\gamma]$ must have order $q$. 

So we can assume that $[\gamma] = [\alpha_{\lambda_1} \beta_{\lambda_2}]$ for some $\lambda_1, \lambda_2 \in F$. By Corollary~\ref{cor:DadePartsIsoTypesF} we then know that $F^\gamma G$ contains a component isomorphic to $F(\sqrt[q]{\lambda_1}, \sqrt[q]{\lambda_2})$. Looking at the components of $F^{\alpha_\lambda} H$ we see that $F(\sqrt[q]{\lambda_1}, \sqrt[q]{\lambda_2}) \cong F(z) $ which implies that we can assume $\{\lambda_1, \lambda_2 \} \subseteq \{1, \lambda \}$. Note that $F^{\alpha_\lambda} H$ contains exactly $p^2q$ components of type $F(z)$. If $\lambda_1 = \lambda_2 = 1$, then by Corollary~\ref{cor:DadePartsIsoTypesF} the algebra $F^\gamma G$ contains no such component, while when $\lambda_1 = \lambda_2 = \lambda$ it contains exactly $p^2$ such components. So we conclude $\{\lambda_1, \lambda_2 \} = \{1,\lambda \}$.  

Assume that $\lambda_2 = \lambda$. Then by Corollary~\ref{cor:DadePartsIsoTypesF} the algebra $F^\gamma G$ contains a component $(\lambda, \zeta_q;F;\zeta_q)_q$. By our assumption on $\lambda$ this component is not isomorphic to a matrix ring over a field, contradicting the fact that all components of $F^{\alpha_\lambda}H$ are isomorphic to matrix rings over fields. 

Hence we can assume $\lambda_2 =1$ and $\lambda_1 = \lambda$. 
By Corollary~\ref{cor:DadePartsIsoTypesF} we then know that $F^\gamma G =  F^{\alpha_\lambda} G \cong F^{\alpha_\lambda} G_1 \otimes FG_2$ contains a component 
\[F(z,\zeta_{p^2}) \otimes M_{pq}(F) \cong M_{pq}(F(z,\zeta_{p^2})).\] 
But analyzing the Wedderburn decomposition of $F^{\alpha_\lambda} H$ we obtain that it does not contain such a component, namely the only components which are isomorphic to $(pq \times pq)$-matrix rings over a field are $M_{pq}(F)$ and $M_{pq}(F(z))$. Hence $F^{\alpha_\lambda} H \not\cong F^{\alpha_\lambda} G$ and overall $F^{\alpha_\lambda} H$ is not isomorphic to $F^\gamma G$ for any $[\gamma] \in H^2(G,F^*)$. \hfill \qed

\begin{remark}\label{rem:RoggenkampAndHertweck}
To our knowledge additionally to Dade's example there are two more examples in the literature of pairs of groups which have isomorphic group algebras over any field. These are on one hand the famous counterexamples to the integral isomorphism problem by Hertweck \cite{HertweckIso} and on the other hand some much simpler and less known examples of Roggenkamp \cite[Chapter VIII]{RoggenkampTaylor}. It is an open question whether these examples satisfy the relation of the (TGRIP) over any field.

On the other hand in view of the fact that $H^2(G, F^*) = 1$ for $G$ a $p$-group and $F$ a perfect field of characteristic $p$, it is tempting to look for $p$-groups $G$ and $H$ such that $G \sim_F H$ holds for any field $F$. This includes the search for $p$-groups $G$ and $H$ such that $FG \cong FH$ holds for any field $F$ of characteristic $p$. The question if such groups exist is known as the Modular Isomorphism Problem for which first counterexamples were found recently \cite{GarciaLucasMargolisDelRioCounterexample}. These examples however do not satisfy $\mathbb{Q}G \cong \mathbb{Q}H$, as mentioned in \cite{MargolisMIPSurvey}, so they are also no candidates to solve Problem~\ref{TwistedBrauer}.
\end{remark}

\textbf{Thanks:} We thank Adam Chapman, Yuval Ginosar and Danny Neftin for many fruitful conversations. We also express our gratitude to the Technion and the Vrije Universiteit Brussel, and especially Eric Jespers, which helped arrange visits of the authors to each others institutions.

\bibliographystyle{amsalpha}
\bibliography{Rational}

\end{document}